	\titleformat{\section}[block]{\Large\bfseries\filcenter}{\thesection}{1em}{}
\let\oldbibliography\thebibliography
\renewcommand{\thebibliography}[1]{%
  \oldbibliography{#1}%
  \setlength{\itemsep}{-.5pt}%
}
\newcommand{\myitem}[1]{%
\item[#1]\protected@edef\@currentlabel{#1}%
}
\definecolor{darkgreen}{rgb}{0,0.5,0}
\definecolor{darkred}{rgb}{0.7,0,0}
\newcommand{\cmark}{\ding{51}}%
\newcommand{\xmark}{\ding{55}}%
\theoremstyle{plain}
\newtheorem{bigthm}{Theorem}
\newtheorem{bigcor}[bigthm]{Corollary}
\theoremstyle{plain}
\newtheorem{lemma}[equation]{Lemma}
\newtheorem{thm}[equation]{Theorem}
\newtheorem{prop}[equation]{Proposition}
\newtheorem{cor}[equation]{Corollary}
\theoremstyle{definition}
\newtheorem{defn}[equation]{Definition}
\newtheorem{ex}[equation]{Example}
\newtheorem{rmk}[equation]{Remark}
\let\expandafter\oldproof\csname\string\proof\endcsname
\let\oldendproof\endproof
\renewenvironment{proof}[1][\proofname]{%
  \oldproof[\upshape \bfseries #1:]%
}{\oldendproof}
\numberwithin{equation}{section}
\newcommand{\al}{\alpha}
\newcommand{\be}{\beta}
\newcommand{\De}{\Delta}
\newcommand{\om}{\omega}
\newcommand{\Om}{\Omega}
\newcommand{\la}{\lambda}
\renewcommand{\th}{\theta}
\newcommand{\D}{\textup{D}}
\newcommand{\R}{\ensuremath{{\mathbb R}}}
\newcommand{\N}{\ensuremath{{\mathbb N}}}
\DeclareMathOperator{\curl}{curl}
\newcommand{\locc}{\operatorname{loc}}
\newcommand{\lebe}{L}
\newcommand{\sobo}{W}
\newcommand{\hold}{C}
\newcommand{\dif}{\operatorname{d}\!}
\newcommand{\cala}{\mathcal{A}}
\newcommand{\B}{\mathcal{B}}
\newcommand*\bigcdot{\mathpalette\bigcdot@{.5}}
\newcommand*\bigcdot@[2]{\mathbin{\vcenter{\hbox{\scalebox{#2}{$\m@th#1\bullet$}}}}}
\newcommand{\weakto}{\rightharpoonup}
\newcommand{\wstar}{\overset{\ast}{\rightharpoonup}}
\let\div\relax
\DeclareMathOperator{\div}{div}
\newcommand{\brmk}{\begin{rmk}}
\newcommand{\ermk}{\end{rmk}}
\newcommand{\partref}[1]{\hbox{(\csname @roman\endcsname{\ref{#1}})}}
\newcommand{\tp}{\textup}
\newcommand{\hs}{\hspace{0.5cm}}
\newcommand{\beq}{\begin{equation}}
\newcommand{\eeq}{\end{equation}}
\newcommand{\beqs}{\begin{equation*}}
\newcommand{\eeqs}{\end{equation*}}
\newcommand{\beqa}{\begin{equation}\begin{aligned}}
\newcommand{\eeqa}{\end{aligned}\end{equation}}
\newcommand{\beqas}{\begin{equation*}\begin{aligned}}
\newcommand{\eeqas}{\end{aligned}\end{equation*}}
\newcommand{\half}{\frac{1}{2}}
\newcommand{\eps}{\varepsilon}
\newcommand{\supp}{\text{supp}}
\begin{document}

 \title{\LARGE \textbf{Compensated Compactness:\\
 continuity in optimal weak topologies}}

\author[1]{{\Large Andr\'e Guerra}}
\author[2]{{\Large  Bogdan Rai\cb{t}\u{a}}}
\author[3]{{\Large  Matthew R.\,I.~Schrecker}}

\affil[1]{\small University of Oxford, Andrew Wiles Building,
 Woodstock Rd,
Oxford, OX2 6GG,
United Kingdom 
\protect\\
  {\tt{guerra@maths.ox.ac.uk}}\vspace{1em} \ }

\affil[2]{\small 
Max Planck Institute for Mathematics in the Sciences, Inselstra{\ss}e 22, 04103 Leipzig, Germany\protect\\
  {\tt{raita@mis.mpg.de}}
 \vspace{1em} \ }

\affil[3]{\small Department of Mathematics, University College London, 25 Gordon St, London, WC1H 0AY, UK
\protect\\
  {\tt{m.schrecker@ucl.ac.uk}} }

\date{}

\maketitle
\begin{abstract} 
For 
$l$-homogeneous linear differential operators $\cala$ of constant rank, we study the implication
$$
\begin{rcases}
v_j\rightharpoonup v&\textup{in }X\\
\cala v_j\rightarrow \cala v&\textup{in }W^{-l}Y
\end{rcases}
\implies 
F(v_j)\rightsquigarrow F(v)\textup{ in }Z,
$$
where $F$ is an $\cala$-quasiaffine function and $\rightsquigarrow$ denotes an appropriate type of weak convergence. Here $Z$ is a local $\lebe^1$-type space, either the space $\mathscr{M}$ of measures, or $\lebe^1$, or the Hardy space $\mathscr{H}^1$; $X,\, Y$ are $\lebe^p$-type spaces, by which we mean Lebesgue or Zygmund spaces. Our conditions for each choice of $X,\,Y,\,Z$ are sharp. Analogous statements are also given in the case when $F(v)$ is not a locally integrable function and it is instead defined as a distribution. In this case, we also prove $\mathscr{H}^p$-bounds for the sequence $(F(v_j))_j$, for appropriate $p<1$, and new convergence results in the dual of H\"older spaces when $(v_j)$ is $\cala$-free and lies in a suitable negative order Sobolev space $W^{-\beta,s}$. The choice of these H\"older spaces is sharp, as is shown by the construction of explicit counterexamples. Some of these results are new even for distributional Jacobians. 
\end{abstract}

{
\tableofcontents
}

\unmarkedfntext{
\hspace{-0.85cm} 
\emph{2010 Mathematics Subject Classification:} 49J45 (35E20, 42B30, 46E30)

\noindent \emph{Keywords:} Compensated compactness, Weak continuity, Null Lagrangians, Linear partial differential operators, Constant rank operators, Hardy spaces, Zygmund spaces, Orlicz spaces.

\noindent  \emph{Acknowledgments:} A.G. was supported by the Engineering and Physical Sciences Research Council [EP/L015811/1]. 
}

\section{Introduction}

The interaction between nonlinear functionals and weak convergence is a recurring theme in the study of nonlinear partial differential equations. Typically, one has a sequence of functions $(v_j)_j$ satisfying certain bounds leading to weak convergence and would like to understand the convergence properties of a given nonlinear function composed with the terms of the sequence. In particular, the theory of compensated compactness studies the weak convergence of $(F(v_j))_j$ for weakly convergent sequences $(v_j)_j$, where $F$ is a nonlinear function (for reasons we will see below, $F$ will always be taken to be a polynomial). In the absence of further constraints, it is easy to see that if $(v_j)_j$ converges weakly in some $\lebe^p$-space, the convergence
\begin{align}\label{eq:conv_intro}
F(v_j)\rightsquigarrow F(v)
\end{align}
cannot be expected to hold in any meaningful sense: one simply takes $F=|\,\cdot\,|^p$ and an oscillating or concentrating sequence $(v_j)_j$ to see that there need be no such relation in the limit between $\lim_{j\to\infty}F(v_j)$ and $F(v)$.

However, under certain conditions, such an identification of the limit
may hold true. The first known positive example is the weak continuity
of the Jacobian subdeterminants, under the restriction $v_j=\D u_j$
\cite{Ball1977, Morrey1952, Reshetnyak1967,Reshetnyak1968}. Subsequently, the
celebrated div-curl lemma \cite{Tartar1979} has led to remarkable developments which we will summarize roughly here as follows: under a compensation condition (of linear PDE type, relating to an operator $\cala$) on $(v_j)$, the convergence \eqref{eq:conv_intro} holds weakly-$*$ in the sense of distributions precisely for a class of (polynomial) nonlinearities $F$ determined by $\cala$, the so-called $\cala$-quasiaffine functions. 

This theory of compensated compactness rapidly found applications in
many areas of nonlinear PDE, covering hyperbolic conservation laws,
elasticity and geometric analysis, to name just a few areas. Tartar
\cite{Tartar1979} applied the div-curl lemma to demonstrate the
existence of entropy solutions to scalar hyperbolic conservation
laws. His techniques were extended by DiPerna \cite{DiPerna1983} to cover $2\times 2$ strictly hyperbolic systems and the isentropic Euler equations. Ball \cite{Ball1977}, M\"uller \cite{Muller1990} and others used the weak convergence of the determinant to prove existence theorems in nonlinear elasticity. Moreover, compensated compactness has been used to demonstrate regularity of harmonic maps between manifolds in \cite{Helein2002}, see also \cite{Evans1990} for an exposition.

Later, and in parallel to the development of the compensated
compactness theory, a remarkable observation by M\"uller led to a different
direction of study: it was shown in \cite{Muller1990} that if $v=\D
u\in\lebe^n_{\locc}(\R^n,\R^{n\times n})$ has non-negative
determinant, then $\det \D u\in L\log L_{\locc}$. This inspired
Coifman--Lions--Meyer--Semmes to prove in \cite{Coifman1993} that if
$\D u\in \lebe^n(\R^n,\R^{n\times n})$, then $\det \D u$ lies in the
Hardy space $\mathscr{H}^1$. A natural question to ask is what is the
class of pairs of compensating operators $\cala$ and nonlinearities
$F$. It was speculated and verified for several examples in
\cite{Coifman1993} that this is precisely the class of
$\cala$-quasiaffine functions. This claim was subsequently checked for
further examples in \cite{Lindberg2017} and, more recently, a positive result covering a large class of operators and exponents was given by the first two authors in \cite{Guerra2019}. To present this result accurately, we introduce some notation.

Given finite-dimensional inner product spaces $\mathbb{V}, \mathbb{W}$, we will consider a linear, $l$-th homogeneous constant coefficient operator
\begin{equation}
\label{eq:defA}
\mathcal A=\sum_{|\alpha|=l} A_\alpha \partial^\alpha, \hs \tp{where }A_\alpha \in \tp{Lin}(\mathbb V, \mathbb W),
\end{equation}
acting on maps $v\colon \Omega\subset\R^n\to \mathbb V$. In this
paper, $\Om\subset\R^n$ will always denote an open, bounded domain.
We define (using multi-index notation) $$\mathcal A(\xi)=\sum_{|\alpha|=l} A_\alpha\xi^\alpha,$$ a polynomial in $\xi \in \R^n$ and we will assume that $\mathcal A$ has \textbf{constant rank}, i.e.\ there is $r\in \N$ such that
\begin{equation}
\tp{rank}\, \mathcal A(\xi)=r \hs \tp{for all } \xi \in \mathbb{S}^{n-1}.
\label{eq:CR}
\end{equation}
The reader may find other characterizations of constant rank operators in \cite{GuerraRaita2020,Raita2018}.
We shall also make the non-degeneracy assumption that
\begin{equation}
\Lambda_{\mathcal A}\equiv \bigcup_{\xi \in \mathbb S^{n-1}} \ker \cala
(\xi)\quad \tp{spans } \mathbb V;\label{eq:spanning}
\end{equation}
the set $\Lambda_\cala$ is called the \textbf{wave cone} of $\cala.$
We recall that a locally bounded measurable function $F\colon \mathbb V \to \R$ is said to be $\cala$-quasiaffine if and only if
$$
F(v_0)=\fint_{\mathbb T^n} F(v_0+v(x))\dif x$$
for all $v_0\in \mathbb V$ and $v\in\hold^\infty(\mathbb T^n)$ such that
$$\cala v=0,\qquad\fint_{\mathbb T^n} v(x)\dif x=0,
$$
where $\mathbb T^n=\R^n/\mathbb Z^n$ is the $n$-dimensional torus. It
is known from the work of Murat and Tartar \cite{Tartar1979} that these functions are necessarily polynomials.

In \cite{Guerra2019}, the following result was proved:
\begin{thm}\label{thm:CC-standard}
Let $\Omega\subset \R^n$ be a bounded open set. Let  $\mathcal A$ be as in (\ref{eq:defA}) and assume (\ref{eq:CR})
and (\ref{eq:spanning}) hold. Then for a homogeneous $\mathcal A$-quasiaffine integrand $F\colon \mathbb V \to \R$ of degree $s\geq 2$, we have the implication
\begin{equation}\label{eq:M_intro}
\begin{rcases}
v_j\weakto v &\text{ in }\lebe^s(\Om,\mathbb V),\\
\mathcal{A}v_j\to \mathcal{A}v &\text{ in } \sobo^{-l,s}(\Om,\mathbb W)
\end{rcases}\hs \implies \hs F(v_j)\wstar F(v) \text{ in }\mathscr{M}(\Omega).
\end{equation}
Moreover, the following bound holds:
\begin{equation*}
\|F(v)\|_{\mathscr{H}^1(\R^n)}\leq C\|v\|_{\lebe^s(\R^n)}^s\quad \text{for }v\in\lebe^s(\R^n)\text{ such that }\cala v=0,
\end{equation*}
which implies that, with an exact constraint imposed, $\mathcal{A}v_j=0$, we have the convergence
\begin{equation}\label{eq:H^1_intro}
v_j\weakto v \text{ in }\lebe^s(\R^n,\mathbb V)
\hs \implies \hs F(v_j)\wstar F(v) \text{ in }\mathscr{H}^1(\R^n).
\end{equation}
\end{thm}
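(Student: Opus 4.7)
My plan is to establish the theorem in three steps: first deduce (\ref{eq:M_intro}) from classical compensated compactness together with uniform $L^1$-boundedness of $(F(v_j))_j$; next prove the $\mathscr{H}^1$-bound on $\cala$-free maps via an extension of the Coifman--Lions--Meyer--Semmes technique; and finally derive (\ref{eq:H^1_intro}) by combining the bound with the convergence in (\ref{eq:M_intro}). For the first step, note that $F$ is a homogeneous polynomial of degree $s$, so H\"older's inequality and the uniform $L^s$-bound on $(v_j)_j$ yield that $(F(v_j))_j$ is bounded in $L^1(\Om)$, and a fortiori in $\mathscr{M}(\Om)$. The hypotheses $v_j\rightharpoonup v$ in $L^s(\Om,\mathbb V)$ and $\cala v_j\to\cala v$ in $W^{-l,s}(\Om,\mathbb W)$ are precisely those of the classical Murat--Tartar compensated compactness theorem \cite{Tartar1979}, which yields distributional convergence $F(v_j)\to F(v)$ in $\mathscr{D}'(\Om)$. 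Since $C_c^\infty(\Om)$ is dense in $C_0(\Om)$, uniform boundedness in $\mathscr{M}(\Om)$ combined with distributional convergence upgrades to the claimed weak-$\ast$ convergence in $\mathscr{M}(\Om)$.

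For the $\mathscr{H}^1$-bound, the main technical step, I would use the constant rank hypothesis (\ref{eq:CR}) to construct a linear, constant-coefficient, homogeneous differential operator $\mathcal{B}$, a \emph{potential} for $\cala$, characterised by the symbol relation $\tp{Im}\,\mathcal{B}(\xi)=\ker\cala(\xi)$ for all $\xi\neq 0$; see \cite{GuerraRaita2020,Raita2018}. Every $\cala$-free $v\in L^s(\R^n)$ may then be represented, modulo Fourier-side inversion on $\tp{Im}\,\mathcal{B}(\xi)$, as $v=\mathcal{B}u$ for some potential $u$, so that the $\cala$-quasiaffinity of $F$ translates into a null-Lagrangian structure for $F\circ\mathcal{B}$ as a homogeneous differential polynomial in $u$ of total order $sl$, sharing the divergence and cancellation structure of the Jacobian $\det\D u$. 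One then adapts the argument of \cite{Coifman1993}: test $F(v)$ against $\phi\in\tp{BMO}$ via the Fefferman--Stein duality $(\mathscr{H}^1)^*=\tp{BMO}$, and exploit the null structure through commutator estimates of Coifman--Rochberg--Weiss type on Riesz-type multipliers to obtain $|\langle F(v),\phi\rangle|\leq C\|v\|_{L^s}^s\|\phi\|_{\tp{BMO}}$. I expect the hardest part to be the algebraic step of identifying $F\circ\mathcal{B}$ as a multilinear form with precisely the null-form symbol structure needed for the CLMS-style duality argument to close in the generality of an arbitrary constant rank operator.

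Finally, for (\ref{eq:H^1_intro}), assume the exact constraint $\cala v_j=0$ and $v_j\rightharpoonup v$ in $L^s(\R^n,\mathbb V)$. The $\mathscr{H}^1$-bound gives uniform boundedness of $(F(v_j))_j$ in $\mathscr{H}^1(\R^n)$. Since $\mathscr{H}^1(\R^n)$ is the dual of the separable space $\tp{VMO}(\R^n)$, the Banach--Alaoglu theorem provides weak-$\ast$ cluster points; each such cluster point must coincide with $F(v)$, since $C_c^\infty(\R^n)\subset\tp{VMO}(\R^n)$ densely and the pairing on $C_c^\infty$ is determined by the distributional convergence provided by the first step (or equivalently by the classical Murat--Tartar theorem). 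A standard subsequence argument then yields weak-$\ast$ convergence of the whole sequence to $F(v)$ in $\mathscr{H}^1(\R^n)$, completing the proof.
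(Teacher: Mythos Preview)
Your overall architecture matches the paper's, but two points deserve comment.

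For (\ref{eq:M_intro}), invoking \cite{Tartar1979} as a black box is not quite adequate: Tartar's original theory treats first-order operators, and the extension to $l$-homogeneous constant rank operators with the $W^{-l,s}$ compactness assumption is precisely what needs proving (this was done in \cite{Fonseca1999,Guerra2019}). The paper does not cite this as known but instead supplies a direct argument (Section~\ref{sec:Hp} and Remark~\ref{rmk:CC}): after a cut-off, use the Helmholtz decomposition $v_j=\mathcal{B}u_j+\cala^*w_j$, show that $F(v_j)-F(\mathcal{B}u_j)$ converges in $L^1_{\locc}$ by a Taylor-type expansion and the strong convergence of $\cala^*w_j$, and then treat $F(\mathcal{B}u_j)$ by reducing to Jacobian minors of $\D U_j$ (with $U_j=\D^{k-1}u_j$) and integrating by parts via the cofactor identity $\D_{x'}^*\Sigma=0$. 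So your Step~1, as written, either begs the question or should be replaced by this decomposition-and-reduction argument.

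For the $\mathscr{H}^1$-bound, your plan to go via $\tp{BMO}$-duality and Coifman--Rochberg--Weiss commutators is a legitimate route and is one of the methods in \cite{Coifman1993}, but it is not the one the paper takes. The paper (Step~1 of the proof of \eqref{eq:secondgoal}) makes the same reduction you outline---potential $\mathcal{B}$, then $F\circ T$ as a combination of $s\times s$ minors of $\D U$---but then estimates the maximal function of $M(\D U)$ \emph{directly}: convolve with $\psi_t$, integrate by parts using the divergence-free cofactor row $\Sigma$, apply Poincar\'e--Sobolev to gain a factor $t$, and finish with H\"older and the $L^p$-boundedness of the Hardy--Littlewood maximal function. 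This avoids the commutator machinery entirely and is more elementary; your duality approach would also work but requires more external input. Your Step~3 agrees with the paper.
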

In \cite{Guerra2019}, \eqref{eq:M_intro} was derived from a more general lower semicontinuity result. Here we shall give a simple direct proof, see Section \ref{sec:Hp} below.

We remark, however, that the two topologies used in \eqref{eq:M_intro} and \eqref{eq:H^1_intro} are different and that it is possible that the convergences claimed may be sensitive to these topologies. The aim of this paper is to show that this is indeed the case: if we allow for perturbations $(\cala v_j)_j$ that are compact in the strong topology of a space $W^{-l}Y$, it can indeed happen that $(F(v_j))_j$ converges in $\mathscr{M}$, but not in $\mathscr H^1$. This is already the case if $Y=\lebe^s$, see Example~\ref{ex:d}.

We note that also the weak convergence of $(F(v_j))_j$ in $\lebe^1$ to $F(v)$ is to be expected in the case when $(F(v_j))_j$ is equi-integrable, c.f.\ \cite{Conti2011} in the case of the standard $\div$-$\curl$ lemma. By the Dunford--Pettis theorem, this is a statement referring to the identification of the weak $\lebe^1$ limit, as the existence of a weakly convergent subsequence in $L^1$ already follows from the equi-integrability. Such a result requires only the very weak compactness assumption $\cala v_j\to \cala v$ in $W^{-l,1}$ in the constraint (see Theorem \ref{thm:main} below for the precise statement), an advantage exploited in both \cite{Conti2013a} and \cite{Schrecker2019}.

In view of these considerations, the theme of the first part of this paper is as follows: we work with $\cala$-quasiaffine functions $F$ and determine precise conditions on the convergence of $(v_j)_j,\,(\cala v_j)_j$ to ensure that $(F(v_j))_j$ converges suitably in $\mathscr{M}$, $\lebe^1$, or $\mathscr{H}^1$. Moreover, since $\mathscr{H}^1_{\locc}$ contains all functions with $\lebe\log\lebe_{\locc}$ integrability, we will show that the differences in convergence can be seen very clearly on the scale of the Zygmund (or, more precisely, Orlicz) spaces $L^p\log^\alpha L$. For the sake of clarity of exposition, we give only local statements to avoid overburdening the reader with unnecessary details. The interested readers will have no difficulty in applying our results to related scenarios.

In the final sections of this paper, we investigate the case below integrability or below differentiability, that is, working with an $s$-homogeneous $\cala$-quasiaffine function $F$ and a sequence $(v_j)_j$ bounded in either $L^q$ with $q<s$ or in $W^{-\beta,s}$ with $\beta>0$. We investigate first conditions sufficient for a bound in the Hardy space $\mathscr{H}^{\frac{q}{s}}$, and secondly for the convergence of $F(v_j)$ to hold in the dual of a homogeneous H\"older space. 

\subsection{$L^1$-type spaces: $\mathscr M$ vs. $\lebe^1$ vs. $\mathscr{H}^1$}\label{sec:intro_L1H1}
We suppose $F$ is an $\cala$-quasiaffine integrand of $s$-growth. Our first main theorem is then as follows.
\begin{bigthm}\label{thm:main}
Let $\cala$ and $F$ satisfy the assumptions of Theorem~\ref{thm:CC-standard} where
 $s\geq 2$. 
 
 Take $\alpha\geq 0$ and let $r\geq 1$, $\beta\in \R$ be such that $L^r\log^\beta L_{\locc}\subseteq L^1_{\locc}$.
Consider a sequence $(v_j)_j$ such that
        \begin{align*}
           & v_j\rightharpoonup v \text{ in } L^s \log^\alpha L_{\locc}(\R^n,\mathbb{V})\\
           & \cala v_j\rightarrow \cala v\text{ in }\sobo^{-l}_{\locc}L^r \log^\beta L(\R^n,\mathbb W).
        \end{align*}
        The following hold:
        \begin{enumerate}
            \item \label{itm:M}
            If $L^r\log^\beta L_{\locc}\subseteq L^s_{\locc}$, then $F(v_j)\wstar F(v)$ in $\mathscr M_{\locc}(\R^n)$.
        \item\label{itm:L1}  
		If $\alpha> 0$, then $F(v_j)\rightharpoonup F(v)$ in $L^1_{\locc}(\R^n)$.
          \item\label{itm:H1} 
 If $\alpha\geq 1$ or  $L^r\log^\beta L_{\locc}\subseteq L^s \log^s L_{\locc}$, then $F(v_j)$ is bounded in $\mathscr H^1_{\locc}(\R^n)$.
          \end{enumerate}
          More generally, the conclusion of \ref{itm:L1} holds whenever $F(v_j)$ is equi-integrable.
\end{bigthm}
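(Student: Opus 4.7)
The plan is to combine a Helmholtz-type decomposition $v_j = w_j + z_j$ adapted to $\cala$ with the multilinear polar form of $F$, reducing matters to Theorem~\ref{thm:CC-standard} at degree $s$ for the $\cala$-free part $w_j$ and to its analogues at lower degrees for the mixed terms. Concretely, the constant rank hypothesis yields a $0$-homogeneous Fourier multiplier $\pi_{\cala}$ projecting onto $\cala$-free fields (see \cite{Raita2018,GuerraRaita2020}); setting $w_j := \pi_{\cala} v_j$ and $z_j := v_j - w_j$, one has $\cala w_j = 0$ and $z_j = T(\cala v_j)$ for a homogeneous Fourier multiplier $T$ of order $-l$. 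Calder\'on--Zygmund estimates extended to the Zygmund/Orlicz scale give that $T:W^{-l}L^r\log^\beta L \to L^r\log^\beta L$ is bounded, so $z_j \to z := T(\cala v)$ strongly in $L^r\log^\beta L_{\locc}$.

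Let $L(\cdot,\ldots,\cdot)$ denote the symmetric $s$-linear form on $\mathbb V^s$ with $F(v)=L(v,\ldots,v)$. Expanding,
\begin{equation*}
F(v_j)\;=\;F(w_j)\;+\;\sum_{k=1}^{s}\binom{s}{k}\,L(\underbrace{w_j,\ldots,w_j}_{s-k},\,\underbrace{z_j,\ldots,z_j}_{k}).
\end{equation*}
A direct calculation from the mean-value definition of $\cala$-quasiaffinity shows that, for each fixed $\bar z\in\mathbb V$, the polynomial $w\mapsto L(w,\ldots,w,\bar z,\ldots,\bar z)$ (with $k$ copies of $\bar z$) is itself $\cala$-quasiaffine of degree $s-k$; this is the $\cala$-analogue of the classical fact that polar forms of null Lagrangians are null Lagrangians in each slot. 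Replacing $z_j$ with $z$ in each mixed term produces an error that vanishes in $L^1_{\locc}$ by H\"older and the strong convergence of $z_j$, so it suffices to study $L(w_j^{s-k},z^k)$ with $z$ the $x$-dependent limit. Approximating $z(x)$ by simple $\mathbb V$-valued functions $\sum_i \bar z_i\mathbf 1_{E_i}$ reduces matters, on each piece, to Theorem~\ref{thm:CC-standard} applied to the $\cala$-quasiaffine function $L(\cdot,\bar z_i^k)$ of degree $s-k$ evaluated on the $\cala$-free sequence $w_j$ (or, if $s-k=1$, to trivial weak continuity).

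For~\ref{itm:M}, the hypothesis $L^r\log^\beta L\subseteq L^s$ upgrades the decomposition to $z_j\to z$ strongly in $L^s_{\locc}$, whence $w_j\weakto w := v-z$ in $L^s_{\locc}$ with $\cala w_j=0$; Theorem~\ref{thm:CC-standard} gives $F(w_j)\wstar F(w)$ in $\mathscr M_{\locc}$, and the mixed terms are handled by the preceding paragraph. For~\ref{itm:L1}, the hypothesis $\alpha>0$ makes $|F(v_j)|\leq C|v_j|^s$ bounded in $L\log^\alpha L_{\locc}$, hence equi-integrable by de la Vall\'ee--Poussin; the same decomposition argument (now with H\"older estimates in $L^1$, which require only the baseline embedding $L^r\log^\beta L\subseteq L^1$) yields distributional convergence $F(v_j)\to F(v)$, and Dunford--Pettis converts this into weak $L^1_{\locc}$-convergence. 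The ``more general'' statement is identical, needing only the equi-integrability. For~\ref{itm:H1}, I would apply the $\mathscr H^1$-bound of Theorem~\ref{thm:CC-standard} to the $\cala$-free $w_j$ and its lower-degree analogues to the mixed terms; the extra $L\log L$-integrability of $|v_j|^s$ is supplied directly by $\alpha\geq 1$, or, via the decomposition, by the condition $L^r\log^\beta L\subseteq L^s\log^s L$ (which forces $|z_j|^s\in L\log L_{\locc}$, from which $|w_j|^s$ inherits the same bound by difference).

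The main obstacle I anticipate is the rigorous implementation of the polar-form identity: namely, verifying that restricting $L$ to a subset of its arguments preserves $\cala$-quasiaffinity in the remaining variables, and then justifying the approximation of the $x$-dependent correction $z$ by simple functions uniformly in the relevant norms ($\mathscr M$, $L^1$, or $\mathscr H^1$). A secondary technical point is the sharp mapping properties of $T$ on the Zygmund scale and the endpoint Coifman--Lions--Meyer--Semmes-type characterizations of $\mathscr H^1$-membership at $L\log L$, which ultimately govern the sharp ranges of $(r,\beta,\alpha)$ in each part.
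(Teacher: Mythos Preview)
Your decomposition strategy is sound for parts~\ref{itm:M} and~\ref{itm:H1}, and is in fact essentially how the paper handles~\ref{itm:H1} (Theorem~\ref{thm:hardy_bound} uses precisely the splitting $v=\B u+\cala^*w$ and the polar expansion~\eqref{eq:nonlin_diff}). For~\ref{itm:M} the paper is more economical: since $L^r\log^\beta L_{\locc}\subseteq L^s_{\locc}$ already gives $\cala v_j\to\cala v$ in $W^{-l,s}_{\locc}$, one invokes Theorem~\ref{thm:CC-standard} directly with no decomposition. Your route there is correct but needlessly elaborate. One minor slip in~\ref{itm:H1}: the parenthetical ``from which $|w_j|^s$ inherits the same bound by difference'' is wrong, since $v_j$ may lie only in $L^s$; the $\mathscr H^1$-bound on $F(w_j)$ comes from the CLMS-type estimate in Theorem~\ref{thm:CC-standard} for $\cala$-free fields, not from any $L\log L$ membership of $|w_j|^s$.

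The genuine gap is in part~\ref{itm:L1}. Your claim that ``H\"older estimates in $L^1$, which require only the baseline embedding $L^r\log^\beta L\subseteq L^1$'' suffice to replace $z_j$ by $z$ is false. The worst mixed term is $L(w_j^{\,s-1},z_j-z)$; since $|w_j|^{s-1}$ is only bounded in $L^{s/(s-1)}$, H\"older forces $z_j\to z$ in $L^s_{\locc}$. But under the hypotheses of~\ref{itm:L1} the constraint converges only in $W^{-l}L^r\log^\beta L$ with $L^r\log^\beta L$ possibly much larger than $L^s$ (indeed $r=1$ is allowed), and the zero-homogeneous multiplier defining $\pi_\cala$ is not even bounded on $L^1$. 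Interpolation against the $L^s$-bound on $z_j$ yields $z_j\to z$ in $L^p$ only for $p<s$, never $p=s$; the error $|w_j|^{s-1}|z_j-z|$ does not close in $L^1$.

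The paper circumvents this by a completely different mechanism (Theorem~\ref{thm:CDM} and Lemma~\ref{lema:aux}): the biting lemma produces an $s$-equi-integrable modification $\tilde v_j$; the assumed equi-integrability of $F(v_j)$ gives $F(\tilde v_j)-F(v_j)\to 0$ in $L^1$; and a higher-order Lipschitz truncation (Proposition~\ref{prop:Lipschitztruncation}) upgrades the constraint convergence from $W^{-l,1}$ to $W^{-l,s}$ for the $s$-equi-integrable sequence. Only after this upgrade can Theorem~\ref{thm:CC-standard} be applied. This truncation step is the missing idea in your proposal for~\ref{itm:L1}.
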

We refer the reader to Section \ref{sec:zygmund} for the definition of $W^{-l}L^p \log^\alpha L(\R^n)$; Section \ref{sec:FunctionSpaces} contains more details on the Zygmund spaces $L^p\log^\alpha L$ and Hardy spaces.

We give a precise estimate for the Hardy bound of Theorem \ref{thm:main}\ref{itm:H1} in Theorem \ref{thm:hardy_bound} (see also the Appendix).
The statements in Theorem \ref{thm:main}\ref{itm:M},\ref{itm:L1} are sharp, as can be seen from the examples in Section~\ref{sec:main}. There, we show that for the classical example $\cala(v,\tilde v)=(\tp{div\,} v,\tp{curl\,}\tilde v)$ and $F(v,\tilde v)=v\cdot \tilde  v$, in each of the first two cases above, the failure of the assumption can lead to the failure of the convergence or boundedness.

\begin{rmk}\label{rmk:sharpen}
For the sake of completeness, we remark that in fact Theorem~\ref{thm:main}\ref{itm:H1} may be improved in two ways:
\begin{enumerate}
    \myitem{(i)} In the case when $\alpha>1$, the statement in Theorem~\ref{thm:main}\ref{itm:H1} can be replaced with the slightly finer statement that
$
F(v_j)\rightharpoonup F(v)\text{ in }\lebe\log\lebe_{\locc}.
$
\myitem{(ii)}\label{itm:sharp} The two conditions given in Theorem \ref{thm:main}\ref{itm:H1} may actually be interpolated. Such an interpolation is best stated in the language of Orlicz functions and so we leave this to an Appendix. The sharp result is stated precisely in Theorem~\ref{thm:hardy_boundOrlicz}.
\end{enumerate}
\end{rmk}

For better readability of the paper, we present the first main result on the Lebesgue scale:

\begin{bigcor}\label{cor:Lp}
         Let $1\leq q,\,r<\infty$ be such that $q\geq s$. Consider a sequence $(v_j)_j$ such that
        \begin{align*}
           & v_j\rightharpoonup v \text{ in }\lebe^{q}_{\locc}(\R^n,\mathbb{V})\\
           & \cala v_j\rightarrow \cala v\text{ in }\sobo^{-l,r}_{\locc}(\R^n,\mathbb W).
        \end{align*}
        Then:
        \begin{enumerate}
        \item
        If $r\geq s$, then $F(v_j)\wstar F(v)\text{ in } \mathscr{M}_{\locc}(\R^n).$
        \item
         If $q>s$, then $F(v_j)\rightharpoonup F(v)\text{ in } \lebe^1_{\locc}(\R^n).$
         \item \label{itm:H1Lebe}
          If $r>s$ or $q>s$, then $\left(F(v_j)\right)_j\text{ is bounded in } \mathscr H^1_{\locc}(\R^n).$
          \end{enumerate}
\end{bigcor}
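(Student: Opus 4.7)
The plan is to deduce each item of Corollary \ref{cor:Lp} from the corresponding item of Theorem \ref{thm:main} by a judicious choice of the Zygmund parameters $\alpha, \beta$ and of the integrability appearing on the right-hand side. Two elementary embeddings on a bounded domain $\Omega$ drive the reduction: $L^q(\Omega) \hookrightarrow L^p \log^\gamma L(\Omega)$ whenever $q > p$ and $\gamma \in \R$ (using $t^p \log^\gamma(e+t) \leq C_\delta(1 + t^{p+\delta})$ for any sufficiently small $\delta > 0$), together with the trivial case $q = p$, $\gamma \leq 0$; and $W^{-l,r}(\Omega) \hookrightarrow W^{-l,1}(\Omega)$ for $r \geq 1$, by duality from $L^r(\Omega) \hookrightarrow L^1(\Omega)$. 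Since $L^s \log^\alpha L_{\locc}$ is reflexive for $s \geq 2$, Urysohn's subsequence principle together with uniqueness of the distributional limit promotes weak convergence in $L^q_{\locc}$ to weak convergence in $L^s \log^\alpha L_{\locc}$ whenever $L^q \hookrightarrow L^s \log^\alpha L$ locally.

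With these tools in hand, each item becomes a short parameter match. For (i), I would set $\alpha = \beta = 0$ and take the corollary's $r$ as the theorem's $r$: the containment $L^r_{\locc} \subseteq L^s_{\locc}$ required by Theorem \ref{thm:main}\ref{itm:M} is exactly $r \geq s$. For (ii), I would fix any $\alpha > 0$, set $\beta = 0$, and take the theorem's $r$ equal to $1$; since $q > s$ we get $v_j \weakto v$ in $L^s \log^\alpha L_{\locc}$, while $\cala v_j \to \cala v$ in $W^{-l,1}_{\locc}$ follows from the second embedding, so Theorem \ref{thm:main}\ref{itm:L1} applies. For (iii), I would split on the hypothesis: if $r > s$, I proceed as in (i) and invoke the second alternative of Theorem \ref{thm:main}\ref{itm:H1}, namely $L^r_{\locc} \subseteq L^s \log^s L_{\locc}$, which holds precisely because $r > s$; if instead $q > s$, I proceed as in (ii) but with $\alpha = 1$, triggering the first alternative $\alpha \geq 1$ in Theorem \ref{thm:main}\ref{itm:H1}.

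The only (mild) obstacle is verifying the two embeddings above and the ensuing weak-convergence transfer between the Lebesgue and Zygmund scales; this is routine functional analysis, and no further compensated-compactness input is required beyond Theorem \ref{thm:main}.
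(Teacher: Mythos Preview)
Your proposal is correct and matches the paper's intended derivation: Corollary~\ref{cor:Lp} is stated in the paper without a separate proof, precisely because it is the specialization of Theorem~\ref{thm:main} to the Lebesgue scale via the parameter choices you describe. The embeddings and the weak-convergence transfer you invoke are exactly the routine reductions the paper leaves implicit.
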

Minor adaptations of the examples in Section~\ref{sec:main} show that these statements are sharp as well (on the Lebesgue scale). Note that in both Theorem~\ref{thm:main}\ref{itm:H1} and Corollary~\ref{cor:Lp}\ref{itm:H1Lebe}, the $\mathscr{H}^1_{\locc}$ boundedness can be used to strengthen the convergence of $F(v_j)$ to $F(v)$ in the following sense: if $\rho\in\hold^\infty_c(\R^n)$, then
$$
\rho F(v_j)\wstar \rho F(v)\text{ in } h^1(\R^n),
$$
where we refer to Section \ref{sec:FunctionSpaces} for the definition of the Goldberg--Hardy space $h^1(\R^n)$; in particular, one can test the convergence against functions in a local version of $\tp{VMO}(\R^n)$, known as $\tp{vmo}(\R^n)$. Local variants of the div-curl lemma and the relationship between $h^1$ and $\tp{vmo}$ were discussed previously in \cite{Dafni2002,Dafni2005}. A variant of the Hardy space bound on domains, taking account of boundary values, was proved in \cite{Hogan2000}.

\subsection{Distributional null Lagrangians and $\mathscr{H}^p$ spaces}
As is well known from \cite{Ball1977}, even if the quantity $F(v)$ is not integrable, one can define suitable distributional variants. In particular, in \cite[Remark~7.4]{Guerra2019} the quantity $F(v)$ was, roughly speaking, defined for $\cala$-free fields $v$, as the distributional limit of the quantities $F(v_j)$, where $v_j$ is a smooth, $\cala$-free approximation of $v$. Here, we will allow for a perturbation in the constraint (i.e.~we will require convergence of $\cala v_j$, but not that $\cala v_j=0$ for all $j$), but work with a slightly less general set up than in Section~\ref{sec:intro_L1H1}, for better readability of the paper. Alternative statements can be obtained with minor alterations of our methods. 

We freeze a bounded open set $\Omega\subset\R^n$, an $\cala$-quasiaffine integrand $F$ of $s$-growth, and the exponents
\begin{align*}
    q\in\left(\frac{ns}{n+1},s\right),\quad r_*\equiv \frac{q}{q-s+1},
\end{align*}
so that $\frac{s-1}{q}+\frac{1}{r_*}=1$. We will essentially define $F(v)$ for fields $v\in\lebe^q$ with support in $\Omega$:
\begin{defn}\label{def:distribnulllag}
Let $(v_j)_j\subset C^\infty_c(\Omega,\mathbb V)$ be such that 
\begin{align*}
           & v_j\rightharpoonup v \text{ in }\lebe^{q}(\Om,\mathbb{V})\\
           & \cala v_j\rightarrow \cala v\text{ in }\sobo^{-l,r_*}(\Om,\mathbb W).
        \end{align*}
        Then we define
        $$
        F(v)\equiv \textup{w*-}\lim_{j\rightarrow\infty} F(v_j)\textup{ in }\mathscr{D}^\prime(\R^n).
        $$
\end{defn}
The following theorem shows that $F(v)$ above is well defined. Moreover, we obtain a Hardy space bound generalizing the results of \cite{Coifman1993} (see also \cite{Bonami2010} for a variant):
\begin{bigthm}\label{thm:dist}
Let $\Omega\subset\R^n$ be a bounded open set, $F\colon \mathbb V\rightarrow \R$ be an homogeneous $\cala$-quasiaffine map of degree $s\geq2$, and $ns/(n+1)<q\leq s$, $r\geq r_*$. For a sequence $(v_j)_j\subset C^\infty_c(\Omega,\mathbb V)$,
        \begin{align*}
\begin{rcases}
 v_j\rightharpoonup v & \text{ in }\lebe^{q}(\Omega,\mathbb{V})\\
\cala v_j\rightarrow \cala v & \text{ in }\sobo^{-l,r}(\Omega,\mathbb W)
\end{rcases}
\quad \implies \quad
F(v_j)\wstar F(v)\text{ in }\mathscr{D}^\prime (\Omega).
        \end{align*}
        Moreover, if $\cala v$ is bounded in $W^{-l}L^{r_*}\log^{r_*}L$ (or simply $r>r_*$), we also have that $(F(v_j))_j$ is bounded in $\mathscr{H}^{q/s}(\R^n)$ with the estimate
        $$
        \|F(v_j)\|_{\mathscr H^{\frac{q}{s}}}\leq C\left(\|v_j\|_{\lebe^q}+\|\cala v_j\|_{W^{-l}L^{r_*}\log^{r_*}L}\right)^s.
        $$
\end{bigthm}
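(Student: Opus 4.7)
The plan is to reduce to the $\cala$-free setting by a Helmholtz-type decomposition of each $v_j$, and then exploit the $\cala$-quasiaffine null-Lagrangian structure through a Coifman--Lions--Meyer--Semmes (CLMS) type estimate at the fractional Hardy endpoint $\mathscr{H}^{q/s}$. Using the constant rank hypothesis \eqref{eq:CR}, fix a potential operator $\mathcal{B}$ with $\ker \cala(\xi)=\operatorname{Im}\mathcal{B}(\xi)$ on $\mathbb{S}^{n-1}$, as well as a pseudo-inverse $T$ of $\cala$; this yields a decomposition $v_j=u_j+w_j$ with $\cala u_j=0$ and $w_j=T(\cala v_j)$. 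By Calder\'on--Zygmund theory on Zygmund spaces, $T$ maps $W^{-l}L^{r_*}\log^{r_*}L$ continuously into $L^{r_*}\log^{r_*}L$; at the borderline $lr_*=n$ the Brezis--Wainger embedding in fact places $w_j$ in a suitable exponential-integrable Orlicz space, while for $r>r_*$ an ordinary fractional Sobolev embedding suffices. In particular $u_j\rightharpoonup u$ in $L^q$, and $w_j$ converges strongly in the appropriate target space.

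Next, polarise $F$ to its symmetric $s$-linear form $\widetilde F$ and expand
\[
F(v_j)=\sum_{k=0}^{s}\binom{s}{k}\widetilde F(\underbrace{u_j,\ldots,u_j}_{s-k},\underbrace{w_j,\ldots,w_j}_{k}).
\]
For the pure $\cala$-free term $F(u_j)$, the CLMS estimate from \cite{Coifman1993,Guerra2019} underlying the $\mathscr{H}^1$-bound in Theorem~\ref{thm:CC-standard} extends, by a scaling and atomic argument, to give $\|F(u_j)\|_{\mathscr{H}^{q/s}}\lesssim \|u_j\|_{L^q}^s$. The hypothesis $q>ns/(n+1)$ is precisely what places $q/s$ above the Hardy-space threshold $n/(n+1)$, so that the atomic theory of $\mathscr{H}^{q/s}$ is available. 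For each mixed term, the polarised form $\widetilde F$ retains $\cala$-quasiaffinity in its $u_j$-slots; expressing those slots as divergences of multilinear null-Lagrangian forms (with $w_j$ frozen as coefficients) and invoking a multilinear CLMS-type bound together with H\"older's inequality in the balance $\tfrac{s-1}{q}+\tfrac{1}{r_*}=1$ produces a bound of order $\|u_j\|_{L^q}^{s-k}\|w_j\|_{L^{r_*}\log^{r_*}L}^{k}$ on the $k$-th mixed term. Summing these yields the stated estimate.

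Finally, for the convergence and well-definedness of $F(v)$ as a distribution, the Hardy-space bound gives weak-$*$ compactness of $(F(v_j))$ in $\mathscr{H}^{q/s}(\R^n)\hookrightarrow \mathscr{D}^\prime(\R^n)$, so a subsequence converges distributionally. Independence of the approximation is obtained by applying the same multilinear expansion to the difference $v_j^1-v_j^2$ of two admissible sequences: the pure $\cala$-free terms vanish in the limit by Theorem~\ref{thm:CC-standard}, while the mixed terms vanish by the strong convergence of $w_j^1-w_j^2$ coupled with the $L^q$-boundedness of the $u_j$-factors. The principal obstacle I expect is the sharp Hardy estimate for the mixed terms at the borderline Zygmund scale $L^{r_*}\log^{r_*}L$: extending the bilinear $\mathscr{H}^1$-CLMS argument to an $s$-linear estimate at the $\mathscr{H}^{q/s}$ endpoint with one factor in the borderline Orlicz space is delicate, and the $\log^{r_*}$ exponent is dictated precisely by the Brezis--Wainger embedding combined with the $s$-linear homogeneity.
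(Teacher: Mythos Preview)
Your overall architecture---Helmholtz decomposition plus multilinear expansion of $F$---matches the paper. However, there are two genuine gaps.

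\textbf{The mixed-term Hardy estimate.} You propose to bound the mixed terms $\widetilde F(u_j,\ldots,u_j,w_j,\ldots,w_j)$ directly in $\mathscr{H}^{q/s}$ via a multilinear CLMS-type inequality, and you correctly flag this as the principal obstacle. The paper sidesteps this entirely. Since all $v_j$ are supported in the bounded set $\Omega$, it suffices to control $\mathcal M_{\locc}[F(v_j)-F(\B u_j)]$ in $L^{q/s}$ on a fixed enlargement $\tilde\Omega$; by H\"older this reduces to an $L^1(\tilde\Omega)$ bound, hence by Stein's theorem (Proposition~\ref{prop:stein}) to an $L\log L$ bound on $F(v_j)-F(\B u_j)$. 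The Zygmund--H\"older inequality~\eqref{eq:holder} then gives, for each term $|\cala^*w_j|^i|\B u_j|^{s-i}$ with $i\geq1$,
\[
\big\||\cala^*w_j|^i|\B u_j|^{s-i}\big\|_{L\log L}\lesssim \|\cala^*w_j\|_{L^{r_*}\log^{r_*/i}L}^{\,i}\|\B u_j\|_{L^q}^{\,s-i}\lesssim \|\cala^*w_j\|_{L^{r_*}\log^{r_*}L}^{\,i}\|\B u_j\|_{L^q}^{\,s-i}.
\]
This is why the $\log^{r_*}$ exponent appears---not from any Brezis--Wainger embedding (your claim that ``$lr_*=n$'' triggers exponential integrability is incorrect: the pseudo-inverse of $\cala$ is a zero-order multiplier, and no Sobolev embedding is involved). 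Your proposed multilinear $\mathscr{H}^{q/s}$-CLMS estimate for a frozen-coefficient term is not available in the literature and would be substantially harder than what is actually needed.

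\textbf{The convergence argument.} You derive distributional convergence from the Hardy bound, but the Hardy bound requires the \emph{stronger} hypothesis ($\cala v_j$ bounded in $W^{-l}L^{r_*}\log^{r_*}L$), whereas the convergence is claimed under the weaker $r\geq r_*$. Moreover, invoking Theorem~\ref{thm:CC-standard} for the pure $\cala$-free difference is illegitimate here since $q<s$. The paper instead proves convergence directly: it writes $F(v_j)-F(\B u_j)=\int_0^1\langle\cala^*w_j,F'(\B u_j+t\cala^*w_j)\rangle\,\dif t$, notes that each component of $F'$ is $(s-1)$-homogeneous $\cala$-quasiaffine, and applies Theorem~\ref{thm:CDM} (weak continuity under equi-integrability and a mere $W^{-l,1}$ compactness of the constraint) to pass to the limit in $F'$ in $L^{q/(s-1)}$. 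This is paired with strong $L^{r_*}$-convergence of $\cala^*w_j$. The pure term $F(\B u_j)$ is handled by the subcritical distributional result of \cite[Remark~7.4]{Guerra2019}, not Theorem~\ref{thm:CC-standard}.
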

Aside from the convergence and boundedness results concerning distributional quantities, while proving Theorem~\ref{thm:dist} in Section~\ref{sec:Hp}, we also include a new proof of \eqref{eq:M_intro}. A third proof will be sketched in Remark~\ref{rmk:CC}.

 It was observed in \cite{Brezis2011b} that, as well as being defined below integrability, in fact the distributional Jacobian could be defined below differentiability in the following sense. Defining the Jacobian operator $J(u)=\det(\D u)$ formally (for smooth functions, say), it is clear that $J(u)$ is meaningful for $u\in W^{1,n}$. \cite[Theorem 3]{Brezis2011b} proved that $J(u)$ could  be given a distributional meaning even for $u\in W^{\frac{n-1}{n},n}$, that is, with differentiability strictly below the usual setting. In fact, they prove a stronger result: that the distributional Jacobian defined on this space takes values in the dual of the homogeneous Lipschitz space $\hold^{0,1}$, and an estimate on the difference of the Jacobians of functions $u$ and $v$ in the dual Lipschitz norm may be obtained in terms of the $W^{\frac{n-1}{n},n}$ difference of $u$ and $v$.

In our final main theorem, we prove firstly that this result extends to more general $s$-homogeneous $\cala$-quasiaffine functions, and secondly that under differentiability conditions between the critical $\frac{s-1}{s}$ and $1$, the estimate may be strengthened to one in the dual of a homogeneous H\"older space.  We remark that some estimates for the Jacobian in fractional Sobolev spaces $W^{-\beta,p}$ have been proved in \cite[Theorem 1.1]{Lenzmann2018} by adapting the methods of \cite{Brezis2011b}.

\begin{bigthm}\label{thm:D}
Let $\Omega\subset\R^n$ be either a bounded Lipschitz domain or $\Omega=\R^n$, $F\colon \mathbb V\rightarrow \R$  an homogeneous $\cala$-quasiaffine map of degree $s\geq2$. Suppose that $u,v\in C^\infty_c(\Omega,\mathbb V)$ are $\cala$-free. Let $\alpha\in (0,1]$ and $\beta=1-\frac \alpha s$. Then, for any $\varphi\in C^{0,\alpha}(\overline{\Omega})$, we obtain the estimates
\begin{align*}
\Big|\int_{\Omega} \big(F(u)-F(v)\big)\varphi\,\dif x\Big|\leq C[\varphi]_{C^{0,\alpha}}[u-v]_{W^{-1+\beta,s}}\big([u]_{W^{-1+\beta,s}}+[v]_{W^{-1+\beta,s}}\big)^{s-1}.
\end{align*}  
\end{bigthm}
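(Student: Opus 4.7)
The plan is to reduce the bound to a multilinear Hardy-space estimate by polarization and telescoping, and to obtain the latter by lifting the compensated compactness $\mathscr H^1$-bound of Theorem~\ref{thm:CC-standard} across the fractional scale.

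\textbf{Step 1: Polarization and telescoping.} Let $L\colon \mathbb V^s \to \mathbb R$ denote the unique symmetric $s$-linear form with $L(w,\ldots,w)=F(w)$. The identity
\[ F(u) - F(v) \;=\; s\int_0^1 L\bigl(u-v,\, u_t,\,\ldots,\, u_t\bigr)\,dt, \qquad u_t := tu+(1-t)v, \]
together with the linearity of $\cala$ (so that $u-v$ and each $u_t$ are still $\cala$-free and compactly supported, with $W^{-1+\beta,s}$-seminorms dominated by $[u]_{W^{-1+\beta,s}}+[v]_{W^{-1+\beta,s}}$), reduces the theorem to the multilinear estimate
\[ \Bigl|\int_\Omega L(w_1,\ldots,w_s)\,\varphi\,dx\Bigr| \;\lesssim\; [\varphi]_{C^{0,\alpha}} \prod_{i=1}^{s}[w_i]_{W^{-1+\beta,s}} \]
for arbitrary $\cala$-free $w_i \in C^\infty_c(\Omega,\mathbb V)$.

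\textbf{Step 2: Fractional lift and multilinear Hardy bound.} For each $w_i$, set $v_i := (-\Delta)^{-\alpha/(2s)} w_i$ on $\mathbb R^n$. Since $\cala$ and $(-\Delta)^{\gamma}$ commute as Fourier multipliers, each $v_i$ is smooth, $\cala$-free, and satisfies $\|v_i\|_{L^s(\mathbb R^n)} \sim [w_i]_{\dot W^{-\alpha/s,s}(\mathbb R^n)} = [w_i]_{W^{-1+\beta,s}}$. Polarizing Theorem~\ref{thm:CC-standard} (i.e.\ applying it to $F(\sum_{i\in I} v_i)$ for all $I \subseteq \{1,\ldots,s\}$ and recombining by inclusion-exclusion, then optimizing the homogeneity in each slot) yields the multilinear bound
\[ \|L(v_1,\ldots,v_s)\|_{\mathscr H^1(\mathbb R^n)} \;\lesssim\; \prod_{i=1}^{s}\|v_i\|_{L^s(\mathbb R^n)}. \]
A paraproduct / multilinear Kato--Ponce decomposition then lifts this to the fractional bound
\[ \|L(w_1,\ldots,w_s)\|_{\mathscr H^{n/(n+\alpha)}(\mathbb R^n)} \;\lesssim\; \prod_{i=1}^{s} [w_i]_{W^{-1+\beta,s}}, \]
using the $\cala$-free structure of each $v_i$ to control the high-high frequency interactions. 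Pairing with $\varphi$ via the duality $(\mathscr H^{n/(n+\alpha)})^* = \dot C^{0,\alpha}$ for $\alpha\in (0,1]$ concludes the multilinear estimate, and hence Theorem~\ref{thm:D}.

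\textbf{Main obstacle.} The delicate step is the fractional Hardy lift in Step 2, i.e.\ transferring the $\mathscr H^1$ bound at $L^s$-level to a $\mathscr H^{n/(n+\alpha)}$ bound at $W^{-\alpha/s,s}$-level. This requires a multilinear paraproduct decomposition that respects the null-Lagrangian cancellation in the high-high interaction pieces, the latter being controlled via the $\cala$-potential $\B$ (whose existence is guaranteed by the constant-rank assumption). In the Jacobian case, analogous estimates were carried out in \cite{Brezis2011b,Lenzmann2018}; the extension to general constant-rank $\cala$-quasiaffine $F$ follows similar lines. As a minor technicality, $\Omega$ is handled by extending $u,v$ by zero, which preserves $\cala$-freeness and compact support, so that all estimates can be performed globally on $\mathbb R^n$.
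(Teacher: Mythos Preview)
Your Step~1 is fine and is essentially the same telescoping reduction the paper uses (cf.\ (\ref{eq:BNdetdiff})). The genuine gap is Step~2: you have not proved the fractional Hardy bound
\[
\|L(w_1,\ldots,w_s)\|_{\mathscr H^{n/(n+\alpha)}(\R^n)} \lesssim \prod_{i=1}^s [w_i]_{W^{-\alpha/s,s}},
\]
and what you write in its place is not an argument. Knowing $\|L(v_1,\ldots,v_s)\|_{\mathscr H^1}\lesssim \prod\|v_i\|_{L^s}$ for $v_i=(-\Delta)^{-\alpha/(2s)}w_i$ does not transfer to a bound on $L(w_1,\ldots,w_s)$, because the nonlinear product $L$ does not commute with $(-\Delta)^{\alpha/(2s)}$; the commutator terms are precisely where the null-Lagrangian cancellation must be exploited, and you have only asserted (``a paraproduct / multilinear Kato--Ponce decomposition\ldots using the $\cala$-free structure'') that this can be done. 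Your ``Main obstacle'' paragraph acknowledges this but does not resolve it, and the references you cite (\cite{Brezis2011b,Lenzmann2018}) do not proceed via paraproducts and Kato--Ponce in this way.

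The paper's proof avoids Hardy spaces altogether for Theorem~\ref{thm:D}. It first reduces, via the Helmholtz decomposition $w=\B\tilde u$ and $F\circ T(\D u)=\sum c_M M(\D u)$, to $s\times s$ Jacobian minors. Then, following \cite{Brezis2011b}, it uses harmonic extension to $\R^{n+1}_+$: writing $U,\Phi$ for the Poisson extensions of $u,\varphi$, one has the identity
\[
\int_{\R^n}\det(\D u)\,\varphi\,\dif x=\int_{\R^{n+1}_+}\det\nolimits_{n+1}(\D_{t,x}\Phi,\D_{t,x}U)\,\dif x\,\dif t,
\]
and the estimate follows from H\"older's inequality together with the trace-type bounds $\|t^{1-1/s-\beta}\D_{t,x}U_i\|_{L^s}\lesssim[u_i]_{W^{\beta,s}}$ and $\sup t^{1-\alpha}|\D_{t,x}\Phi|\lesssim[\varphi]_{C^{0,\alpha}}$ (Proposition~\ref{prop:fractional}). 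This is elementary once the extension identity is in hand, and it is where the cancellation is actually used. Your Step~2, by contrast, leaves the analogous work undone.
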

Theorem \ref{thm:D} appears to be new even in the case of the Jacobian operator, see Theorem~\ref{thm:interpolated}. Earlier results due to \cite[Theorem 1]{Brezis2011b} for the Jacobian and \cite[Proposition 7.1]{Guerra2019} for $\cala$-quasiaffine functions controlled the difference $F(u)-F(v)$ in the dual Lipschitz norm in terms of the $L^p$ norms of $u$ and $v$ and their difference in some $W^{-1,q}$ (where $q$ and $p$ satisfy a suitable relation).  Such results may be partially recovered as corollaries of Theorem \ref{thm:D}, as explained in \cite{Brezis2011b} for the special case of the Jacobian, see Section \ref{sec:holder}. 

Finally, Theorem \ref{thm:D} is sharp on the scale of fractional Sobolev spaces. That is, we show for the Jacobian that on a bounded domain $\Omega$, if $W^{\be,p}(\Omega)\not\hookrightarrow W^{\frac{n-\al}{n},n}(\Omega)$, there are uniformly bounded sequences $(u_k)_k$ in $W^{\be,p}(\Omega)$ and $(\varphi_k)_k$ in $C^{0,\alpha}(\overline{\Omega})$ such that
$$\int_{\Omega}\det(\D u_k)\varphi_k\dif x\to\infty\quad \text{ as }k\to\infty.$$
In order to prove this, we split the analysis into three cases. The construction of the sequence $u_k$ in the two more complicated cases is modelled on that of \cite[Lemma 5]{Brezis2011b} and it is based on a frequency decomposition, adding appropriately weighted oscillations at increasing frequencies. To allow for the range of values of $\alpha$ that we must consider, the construction needs to be modified and, in particular, we need to consider a sequence of test functions, instead of a fixed test function as in \cite{Brezis2011b}. This sequence $\varphi_k$ is constructed through a further frequency decomposition. We refer the reader to Proposition \ref{prop:sharpjacobian} for more details and to \cite{Baer2015} for optimality results for the Hessian determinant.

Let us briefly describe the organisation of this paper. In \S\ref{sec:FunctionSpaces}, we provide basic definitions and results concerning Zygmund spaces as well as the Hardy spaces and their local variants. We state a useful version of the standard H\"ormander--Mihlin multiplier theorem adapted to the Zygmund spaces and finally state and prove an extension of the standard Lipschitz truncation to higher order operators. In \S\ref{sec:L^1}, we employ this Lipschitz truncation to prove a compensated compactness weak convergence result under a $W^{-l,1}$ compactness assumption. This is followed in \S\ref{sec:H1} by the proofs of the Hardy space estimates of Theorem \ref{thm:main}. In \S\ref{sec:main}, we use the results of \S\ref{sec:L^1}--\ref{sec:H1} to prove  Theorem \ref{thm:main} and give a series of counterexamples to demonstrate the sharpness of Theorem \ref{thm:main}. After the proof of Theorem~\ref{thm:dist} in \S\ref{sec:Hp}, we conclude the proof of Theorem \ref{thm:D} and related results, including the construction of the counterexamples mentioned above, in \S\ref{sec:holder}. A final Appendix states and demonstrates the sharp conditions for the Hardy bound of Theorem \ref{thm:main} in terms of Orlicz functions, as claimed in Remark \ref{rmk:sharpen}\ref{itm:sharp}.

\section{Function spaces and harmonic analysis}\label{sec:FunctionSpaces}

In this section we gather in a concise way some facts and definitions about functions spaces that we shall use.
For simplicity we will only work with Zygmund spaces, but more general 
versions of the results stated here 
can be found in the monograph of {Rao}--{Ren} \cite{Rao1991}. 
For results about harmonic analysis we refer the reader to the monograph \cite{Stein2016}.

\subsection{Zygmund spaces}
\label{sec:zygmund}

We will work extensively with the \textbf{Zygmund spaces} $\lebe^p \log^\alpha L(\Omega,\mathbb V)$, defined as the space of those measurable functions $f\colon \Omega\to \mathbb V$ such that
$
\int_\Omega |f(x)|^p \log^\alpha(1+|f(x)|) \dif x<\infty.
$
For $1< p<\infty$, $\al\in\R$ or $p=1$, $\al\geq 0$, $L^p\log^\alpha L$ is a Banach space under the norm
$$\Vert f \Vert_{\lebe^p \log^\alpha L(\Omega)}\equiv 
\left(\int_\Omega |f(x)|^p \log^\alpha \left(1+\frac{|f(x)|}{\Vert f \Vert_p}\right)\dif x\right)^\frac 1 p, 
$$
see e.g.\ the appendix in \cite{Iwaniec1999b}.
It is convenient to record the following fact concerning duals of Zygmund spaces, 
see \cite[Theorem 8.4]{Bennett1980}:

\begin{thm}\label{thm:zygmunddual}
Let $1<p<\infty$ and $\alpha \in \R$. Then the dual of the Banach space $\lebe^p\log^\alpha L$ can be identified, up to an equivalence of norms, with $\lebe^{p'}\log \lebe^{-\frac{p'}{p}\alpha}$, where $\frac 1 p + \frac{1}{p'}=1$.
\end{thm}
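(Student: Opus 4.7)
The plan is to realise $L^p\log^\alpha L$ as an Orlicz space $L^\Phi$ and then invoke the standard Orlicz-space duality theorem, reducing the claim to an explicit asymptotic computation of the complementary Young function. Concretely, the Zygmund norm is equivalent to the Luxemburg norm of $L^\Phi$ with $\Phi(t)\sim t^p\log^\alpha(e+t)$; the verification of this equivalence directly from the definition above is essentially routine, cf.\ the appendix of \cite{Iwaniec1999b}.

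First I would check that, for $1<p<\infty$ and any $\alpha\in\R$ (after, if $\alpha<0$, replacing $\Phi$ near the origin by a convex equivalent), the function $\Phi$ is a Young function belonging to the class $\Delta_2\cap\nabla_2$. The general Orlicz-space theory then ensures that $L^\Phi$ is reflexive and that its norm dual can be identified, up to equivalence of norms, with $L^{\Phi^*}$, where $\Phi^*(s)=\sup_{t\geq 0}(st-\Phi(t))$ is the complementary Young function; see \cite{Rao1991}.

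The substantive step is to compute $\Phi^*$ asymptotically. For large $t$, $\Phi'(t)\sim p\,t^{p-1}\log^\alpha t$, so setting $\Phi'(t)=s$ and using $\log t\sim \log s/(p-1)$ for large $s$, I obtain $t(s)\sim s^{1/(p-1)}(\log s)^{-\alpha/(p-1)}$. Substituting back,
\[
\Phi^*(s)=s\,t(s)-\Phi(t(s))\sim \tfrac{s\,t(s)}{p'}\sim \tfrac{1}{p'}\,s^{p'}\,(\log s)^{-\alpha p'/p},
\]
where I have used the identities $1+1/(p-1)=p'$ and $1/(p-1)=p'/p$. This is, up to a multiplicative constant, precisely the Young function associated with $L^{p'}\log^{-p'\alpha/p}L$, completing the identification.

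The main technical obstacle is twofold: upgrading the pointwise asymptotic $\Phi^*(s)\sim s^{p'}(\log s)^{-p'\alpha/p}$ to a genuine two-sided equivalence of Young functions—which is what forces the conclusion to be merely equivalence of norms rather than isometry—and handling the case $\alpha<0$, where the naive formula for $\Phi$ fails to be convex near the origin and must be replaced by an equivalent convex function before the Orlicz duality theorem can be applied. Both issues are standard but require some care to state cleanly.
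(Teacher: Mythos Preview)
The paper does not prove this statement at all: it simply records it as a known fact with a reference to \cite[Theorem 8.4]{Bennett1980}. Your Orlicz-space approach is correct and is in fact the standard route to such duality results, so there is nothing to compare beyond noting that you have supplied an argument where the paper has only supplied a citation.
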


In particular, we have the Zygmund version of H\"older's inequality,
\begin{equation}
\label{eq:holder}
\Vert f_1\dots f_m \Vert_{L^p\log^\alpha L}\lesssim \Vert f_1\Vert_{L^{p_1}\log^{\alpha_1} L}\dots
\Vert f_m\Vert_{L^{p_m}\log^{\alpha_m} L},
\end{equation}
which holds whenever $p_i>1, \alpha_i\in \R$ are such that 
$\frac 1 p =\frac{1}{p_1} + \dots +\frac{1}{p_m}$ and $\frac \alpha p= \frac {\alpha_1 }{p_1}+ \dots + \frac {\alpha_m }{p_m} $.

For later use, it will be important to deal with \textbf{Zygmund--Sobolev} spaces, defined as an extension of Sobolev spaces. 

\begin{defn}\label{def:zygmundsobolev}
Let $p>1$ and $k\in \N$. Then:
\begin{enumerate} 
\item $W^{k}L^p \log^\alpha L(\Omega,\mathbb V)$ is the space of those distributions $f \in \mathscr{D}'(\Omega,\mathbb V)$ such that, for all multi-indices $\alpha$ with $|\alpha|\leq k$, we have $\partial^\alpha f\in L^p\log^\alpha L(\Omega,\mathbb V)$;
\item\label{it:negsobspace} $W^{-k}L^p\log^\alpha L(\Omega,\mathbb V)$ is the space of $k$-th order distributional derivatives of functions in $L^p \log^\alpha L(\Omega, \mathbb V)$;
\item ${\dot W}^{-k}L^p\log^\alpha L(\Omega, \mathbb V)$ is the homogeneous version of \ref{it:negsobspace} and is defined as the space of $f\in \mathscr S'(\R^n)$ which have support in $\Omega$ and
\begin{equation*}
\label{eq:homsob}
\Vert f \Vert_{{\dot W}^{-k}L^p \log^\alpha L(\R^n)}\equiv \left\Vert \mathcal F^{-1}\left(\frac{\mathcal F f(\xi)}{|\xi|^k}\right)\right\Vert_{L^p\log^\alpha L(\R^n)}<\infty.
\end{equation*}
\end{enumerate}
The local versions of these spaces are defined in the obvious way.
\end{defn}

To conclude this subsection we recall that it is possible to interpolate results from the scale of Lebesgue spaces to that of Zygmund spaces. In particular, 
$L^p$-multipliers 
extend to bounded operators on Zygmund spaces, see \cite[\S 12.12]{Iwaniec2001}. The analogue of the classical H\"ormander--Mihlin multiplier theorem, in this generalized setting, is the following:

\begin{thm}\label{thm:HM}
    Let $m$ be a H\"ormander--Mihlin multiplier, so $m$ corresponds to a Calder\'on--Zygmund operator $T_m$ which, for any $p\in(1,\infty)$,
    is a bounded operator $T_m\colon L^p(\R^n)\to L^p(\R^n)$.
        Then, for $p\in (1,\infty)$ and $\alpha\in \R$, there is a constant $C=C(p,\alpha)>0$ such that
        $$\Vert T_m f \Vert_{L^p \log^\alpha L(\R^n)} \leq C\Vert f \Vert_{L^p \log^\alpha L(\R^n)}.$$
\end{thm}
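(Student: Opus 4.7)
The plan is to reduce the claim to a classical interpolation theorem on rearrangement invariant Banach function spaces. For each $1<p<\infty$ and $\alpha\in\R$, the Zygmund space $L^p\log^\alpha L(\R^n)$ is such a space: its norm depends only on the nonincreasing rearrangement of the input (equivalently, it is the Luxemburg norm of an Orlicz function comparable to $t^p(\log(e+t))^\alpha$), so Boyd's interpolation machinery is available.

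First, I would compute the Boyd indices of $L^p\log^\alpha L$. A direct estimate on dilations $D_t f(x) := f(x/t)$ shows
\[
\Vert D_t \Vert_{L^p\log^\alpha L \to L^p\log^\alpha L} \leq C\, t^{n/p}(1+|\log t|)^{|\alpha|/p},
\]
so the logarithmic correction is inert in the limits defining the upper and lower Boyd indices, both of which then equal $1/p\in(0,1)$, independently of $\alpha$. Secondly, I would invoke Boyd's interpolation theorem: any sublinear operator which is of weak types $(q_0,q_0)$ and $(q_1,q_1)$ for some $1<q_0<q_1<\infty$ extends boundedly to every rearrangement invariant Banach function space whose Boyd indices lie in $(1/q_1,1/q_0)$. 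Since a H\"ormander--Mihlin multiplier is a Calder\'on--Zygmund operator, it is of weak type $(1,1)$ and of strong type $(q,q)$ for every $q\in(1,\infty)$; choosing $1<q_0<p<q_1<\infty$ then yields the stated bound, with constant depending on $p$, $\alpha$ and the H\"ormander--Mihlin seminorm of $m$.

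The main obstacle is the Boyd-index computation, because the Zygmund norm is a logarithmically twisted $L^p$ norm and a careful rearrangement estimate is needed to confirm that the logarithmic factor does not survive the limit; for this one may consult Bennett--Sharpley or the monograph of Rao--Ren \cite{Rao1991}. An alternative route, essentially the one sketched in \cite[\S 12.12]{Iwaniec2001}, is to invoke Rubio de Francia's extrapolation theorem directly: since Calder\'on--Zygmund operators are bounded on every weighted space $L^p(w)$ with $w\in A_p$, they automatically extend to every rearrangement invariant Banach function lattice with nontrivial Boyd indices, in particular to $L^p\log^\alpha L$, bypassing the explicit Boyd computation.
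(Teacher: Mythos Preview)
The paper does not give its own proof of this statement: it is recorded as a known fact, with the single reference \cite[\S 12.12]{Iwaniec2001}, and then used as a black box. So there is no argument in the paper to compare against, only the citation.

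Your Boyd-index route is correct and is one of the standard ways to obtain such bounds. The Zygmund spaces $L^p\log^\alpha L$ are rearrangement invariant, their Boyd indices both equal $1/p$ (the logarithmic factor contributes only a power of $|\log t|$ to the dilation norm and disappears in the limit), and a H\"ormander--Mihlin multiplier is of weak type $(1,1)$ and strong type $(q,q)$ for all $q\in(1,\infty)$, so Boyd's theorem applies after choosing $1<q_0<p<q_1<\infty$.

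One small correction on attribution: the argument in \cite[\S 12.12]{Iwaniec2001} is not Rubio de Francia extrapolation but rather a direct interpolation, phrased through monotonicity conditions on the Orlicz function (namely, that $t^{-p_0}\varphi(t)$ is increasing and $t^{-q_0}\varphi(t)$ is decreasing for some $1<p_0<q_0<\infty$); this is exactly the hypothesis quoted later in the paper as Theorem~\ref{thm:HM_orlicz}. That condition is essentially a reformulation of the Boyd-index requirement, so your first approach is in fact closer to the cited reference than your second. Rubio de Francia extrapolation would also work, but it is a different argument from the one being invoked.
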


\subsection{Hardy spaces, their local versions, and duality}

We fix a test function $\phi$ such that $\int_{\R^n} \phi\neq 0$. Given a distribution $f \in \mathscr D'(\R^n)$, we define the \textbf{Hardy--Littlewood maximal function}, together with its local version, by
\begin{align*}
\mathcal M f(x)& \equiv \sup_{0<t<\infty} |f*\phi_t|(x),\\
\mathcal M_\tp{loc}f(x)& \equiv \sup_{0<t<1} |f*\phi_t|(x).
\end{align*}
For a number $0<p\leq \infty$, the real \textbf{Hardy space} $\mathscr H^p(\R^n)$ is
\begin{equation}
\mathscr{H}^p(\R^n)\equiv \{f \in \mathscr S'(\R^n): \mathcal M f \in \lebe^p(\R^n)\}
\label{eq:maximaldef}
\end{equation}
and it is equipped with the (quasi-)norm $\Vert f \Vert_{L^p(\R^n)}\equiv \Vert \mathcal M f\Vert_{L^p(\R^n)}$.
The maximal theorem shows that, for $p>1$, $\mathscr H^p(\R^n) \cong L^p(\R^n)$. The case $p=1$ is particularly important and we have $\mathscr H^1(\R^n)\subsetneq L^1(\R^n)$.
The \textbf{local Hardy space} is defined by
$$\mathscr{H}^1_\tp{loc}(\R^n)\equiv \{f\in \lebe^1_\tp{loc}(\R^n): \mathcal M_\tp{loc}f \in \lebe^1_\tp{loc}(\R^n)\}.$$
Similarly, the \textbf{Goldberg--Hardy space} \cite{Goldberg1979} is defined by
$$h^1(\R^n)\equiv\{f \in \lebe^1(\R^n):\mathcal  M_\tp{loc}f \in \lebe^1(\R^n)\}.$$

A celebrated theorem due to Fefferman--Stein \cite{Fefferman1972} shows that the definition of $\mathscr{H}^p(\R^n)$ does not depend on the choice of $\phi$, and neither do the definitions of its local variants.
Furthermore, it is easy to see, c.f.\ \cite[Lemma 5.1]{Evans1994a}, that given $f \in \mathscr{H}^1_\tp{loc}(\R^n)$ and $\phi \in C^\infty_c(\R^n)$, $\phi f\in h^1(\R^n)$.

Our interest in local Hardy spaces comes from the following classical result \cite{Stein1969}:

\begin{prop}\label{prop:stein}
Let $f\in L^1_{\locc}(\R^n)$. If $f \in L\log L_\tp{loc}(\R^n)$ then $f \in \mathscr{H}^1_\tp{loc}(\R^n).$ The converse also holds if $f\geq 0$.
\end{prop}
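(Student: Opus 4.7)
The plan is to reduce both implications to the classical Stein equivalence for compactly supported functions, which rests on the weak-$(1,1)$ bound for the Hardy--Littlewood maximal operator (forward direction) and on the Calder\'on--Zygmund decomposition (converse). The localisation step is immediate from the Fefferman--Stein theorem: we may fix the test function $\phi$ in the definition of $\mathcal M_{\locc}$ with $\supp\phi\subset B_1$. Given a compact $K\subset\R^n$, pick $\chi\in C_c^\infty(\R^n)$ with $\chi\equiv 1$ on $K+B_2$; then for $x\in K$ and $0<t<1$ one has $\supp\phi_t(x-\,\cdot\,)\subset B(x,1)\subset\{\chi\equiv 1\}$, so $((1-\chi)f)\ast\phi_t(x)=0$. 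Hence $\mathcal M_{\locc}f=\mathcal M_{\locc}(\chi f)\leq \mathcal M(\chi f)$ on $K$, and both directions reduce to the compactly supported statement for $g\equiv \chi f$.

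For the forward direction, given $g\in L\log L$ of compact support, split $g=g_\lambda^1+g_\lambda^2$ with $g_\lambda^1=g\1_{\{|g|\leq\lambda/2\}}$. Since $\mathcal M g_\lambda^1\leq\lambda/2$ pointwise, the weak-$(1,1)$ bound applied to $g_\lambda^2$ yields
\[
|\{\mathcal M g>\lambda\}|\leq |\{\mathcal M g_\lambda^2>\lambda/2\}|\leq \frac{C}{\lambda}\int_{\{|g|>\lambda/2\}}|g|\dif x.
\]
Layer cake together with the trivial estimate $|\{\mathcal M g>\lambda\}|\leq C|\supp g|$ for $\lambda\leq 1$ and Fubini on $\lambda\in(1,\infty)$ then give $\int_{K'}\mathcal M g\dif x\lesssim |\supp g|+\int |g|\log(1+|g|)\dif x<\infty$ for any bounded $K'$, proving $f\in\mathscr H^1_{\locc}$.

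For the converse, assume $f\geq 0$ and $\mathcal M_{\locc}f\in L^1_{\locc}$. Lebesgue differentiation gives $f\leq \mathcal M_{\locc}f$ almost everywhere, whence $f\in L^1_{\locc}$. Fix a ball $B$ and write $h\equiv f\1_{2B}$; the dyadic Calder\'on--Zygmund decomposition of $h$ at each level $\lambda>\fint_{2B}f$ produces disjoint dyadic cubes $\{Q_j^\lambda\}$ with $\lambda<\fint_{Q_j^\lambda}h\leq 2^n\lambda$, whose union $\Omega_\lambda$ satisfies $\{h>\lambda\}\subseteq\Omega_\lambda$ almost everywhere, $\int_{\Omega_\lambda}h\leq 2^n\lambda|\Omega_\lambda|$, and $\Omega_\lambda\subseteq\{\mathcal M h>c\lambda\}$. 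Combining the elementary identity $h\log^+h=\int_1^\infty \lambda^{-1}h\,\1_{\{h>\lambda\}}\dif\lambda$ with Fubini gives
\[
\int h\log^+ h\dif x\leq \int_1^\infty\lambda^{-1}\int_{\{h>\lambda\}}h\dif x\,\dif\lambda\leq 2^n\int_1^\infty |\Omega_\lambda|\dif\lambda\leq C\int_{E}\mathcal M h\dif x,
\]
where $E$ is any bounded set containing $\{\mathcal M h>1\}$; the latter integral is finite because, outside a neighbourhood of $\supp h$, one has the fast decay $\mathcal M h(x)\lesssim |x|^{-n}\|h\|_{L^1}$, while on a neighbourhood of $\supp h$ it is controlled by $\mathcal M_{\locc}f+\|f\|_{L^1(2B)}$.

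The main obstacle is the last step of the converse: one must transfer the purely local assumption $\mathcal M_{\locc}f\in L^1_{\locc}$ to a bound on $\int_E\mathcal M h\dif x$ involving the full (non-local) maximal function of the truncation $h$, and simultaneously arrange the layer-cake bookkeeping so that the logarithmic gain is not lost when inverting the weight $\lambda\mapsto 1/\lambda$. Everything else is standard once the reduction to compact support is in place.
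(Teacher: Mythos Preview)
The paper does not prove this proposition; it is stated as a classical result with a citation to Stein (1969) and is used only as a black box elsewhere in the paper. Your argument is the standard one and is correct in outline.

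Two small points worth tightening. In the forward direction, the ``trivial estimate'' $|\{\mathcal M g>\lambda\}|\leq C|\supp g|$ for $\lambda\leq 1$ is false as written (for compactly supported $g$ the superlevel sets have measure comparable to $\lambda^{-1}\|g\|_1$); what you actually need, and what makes the layer-cake work, is simply $|\{\mathcal M g>\lambda\}\cap K'|\leq |K'|$. In the converse, the passage from $\mathcal M_{\locc}h$ to $\mathcal M_{\locc}f$ on a neighbourhood of $\supp h$ relies on $\phi\geq 0$ together with $f\geq 0$, so that $h=f\1_{2B}\leq f$ implies $h*\phi_t\leq f*\phi_t$ pointwise; this is exactly where the sign hypothesis on $f$ is used and should be made explicit. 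With these adjustments the proof is complete.
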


The reader may find further information on local Hardy spaces in  \cite{Dafni2002, Dafni2005, Semmes1994}. 

We also recall that both $\mathscr H^1(\R^n)$ and $h^1(\R^n)$ are endowed with a genuine weak-$*$ topology, induced from their preduals (more generally, the balls in $\mathscr H^p(\R^n)$ are weakly-$*$ precompact in the sense of distributions). We refer the reader to \cite{Dafni2002} for the definition of the space of functions of \textbf{vanishing mean oscillation}
$\tp{VMO}(\R^n)$
and its local version $\tp{vmo}(\R^n)$. Moreover, see \cite{Dafni2002, Sarason1975},
$$\mathscr H^1(\R^n) = \tp{VMO}(\R^n)^*,
\qquad 
h^1(\R^n)= \tp{vmo}(\R^n)^*.$$

\subsection{Further properties of constant rank operators}
An important tool used in \cite{Guerra2019} was the observation from \cite{Raita2018} that a linear differential operator $\cala$ as defined in \eqref{eq:defA} satisfies Murat's constant rank condition \eqref{eq:CR} if and only if there exists, for some $k\in\N$, a $k$-homogeneous linear partial differential operator $\mathcal B$ with constant coefficients,
$$\mathcal B\equiv \sum_{|\beta|=k} B_\beta \partial^\beta,\qquad 
B_\beta \in \text{Lin}(\mathbb U,\mathbb V),$$
where $\mathbb U$ is a finite-dimensional inner product space and such that
$$
\ker \cala(\xi)=\mathrm{im\,}\B(\xi)\quad \text{for }\xi\in\R^n\setminus\{0\}.
$$
This does not imply a Poincar\'e lemma, i.e.\ it is not the case that $\cala v=0 \implies v=\B u$ over simply-connected domains. However, such an implication holds whenever we have access to Fourier analysis. This is  reflected in the Helmholtz--Hodge type decomposition of Proposition~\ref{prop:HHHH}, which can be summarized as follows: for test functions $v\in C^\infty_c(\R^n,\mathbb V)$, we have a decomposition
$$
v=\B u+\cala^*w,\quad \|\D^ku\|_X\lesssim\|\B u\|_X\lesssim\|v\|_X,\quad \|\cala^*w\|_Y\lesssim \|\cala v\|_{W^{-l}Y},
$$
where $X,\,Y$ are spaces on which zero-homogeneous multiplier operators are bounded.

The other important observation that we will use is that we can write in jet notation
$$
\B u=T(\D^ku),
$$
where $T$ is a tensor. Consequently, certain statements pertaining to the $\cala$-free setting can be reduced to the setting of higher order gradients. For instance, under the spanning cone condition \eqref{eq:spanning}, we have from \cite[Lemma~5.6]{Guerra2019} that $F$ is $\cala$-quasiaffine if and only if $F\circ T$ is $k$-quasiaffine. The latter class is known from \cite{Ball1981}.

\subsection{Lipschitz truncation}

In this section, we let $p\in [1,\infty)$. The following proposition is an extension to higher order operators of the standard maximal function argument  \cite{Evans2015,Zhang1992}. As well as being of independent interest, Lipschitz truncation will be useful in \S\ref{sec:L^1} in order to prove the compensated compactness convergence result for constraints in $L^1$-type spaces.

\begin{prop}\label{prop:Lipschitztruncation}
Let $v\in W^{k,p}(\R^n,\mathbb V)$ and $\lambda>0$ be arbitrary. There is $u \in W^{k,\infty}(\R^n,\mathbb V)$ such that
\begin{align}
\Vert \tp D^k u\Vert_{L^\infty} &\leq C \lambda\\
\label{eq:volumeestimate}
|\{ v \neq u \}| &\leq C \frac{1}{\lambda^p} \int_{\{|v|+\dots + |\tp D^k v|>\lambda\}} |v|^p+ \dots + |\tp D^k v|^p \dif  x
\end{align}
where the constants $C>0$ only depend on $k,n,p, \mathbb V$.
\end{prop}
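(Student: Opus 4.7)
The plan is to extend the classical Acerbi--Fusco--Zhang Lipschitz truncation to higher order by combining a maximal-function decomposition with a $C^{k-1,1}$ Whitney extension. Put $f \equiv |v|+|Dv|+\cdots+|D^k v|$ and define the good set
\[
G_\lambda \equiv \{x\in\R^n : \mathcal{M}f(x) \leq C_0\lambda\},
\]
where $\mathcal M$ is the Hardy--Littlewood maximal operator and $C_0$ is to be fixed. A standard truncation $f = f\,\mathbf 1_{\{f>\lambda/2\}} + f\,\mathbf 1_{\{f\leq \lambda/2\}}$ combined with the weak-type $(p,p)$ bound for $\mathcal M$ yields
\[
|\R^n\setminus G_\lambda| \leq \frac{C}{\lambda^p}\int_{\{f>\lambda\}} f^p\,dx,
\]
which already matches \eqref{eq:volumeestimate} after absorbing constants into $C$.

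Next I would show that, up to redefinition on a Lebesgue null set, $v|_{G_\lambda}$ admits a $C^{k-1,1}$ jet with Lipschitz constant $\lesssim \lambda$. For Lebesgue points $x,y\in G_\lambda$ of all derivatives $D^j v$ with $0\leq j\leq k$, the higher-order Bojarski--Hajlasz pointwise inequality
\[
\Bigl|D^j v(y) - \sum_{|\alpha|\leq k-j-1}\tfrac{1}{\alpha!}\,D^{j+\alpha}v(x)(y-x)^\alpha\Bigr| \leq C\,|x-y|^{k-j}\bigl(\mathcal{M}(|D^k v|)(x)+\mathcal{M}(|D^k v|)(y)\bigr)
\]
holds, as can be seen by iterating the $L^1$ Poincar\'e inequality along a chain of balls connecting $x$ and $y$ and passing to Lebesgue limits. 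Since $\mathcal{M}(|D^k v|)\leq C_0\lambda$ on $G_\lambda$, the family of Taylor jets $\{(v(x),Dv(x),\ldots,D^{k-1}v(x))\}_{x\in G_\lambda}$ satisfies the compatibility hypotheses of the $C^{k-1,1}$ Whitney extension theorem (see Stein, \emph{Singular Integrals}, Ch.\ VI).

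Applying this Whitney extension produces $u\in C^{k-1,1}(\R^n,\mathbb V)$ with $u=v$ on $G_\lambda$ and $\Vert D^k u\Vert_{L^\infty}\leq C\lambda$; a smooth cutoff adapted to a neighbourhood of $\supp v$ (or simply working locally) gives the desired function in $W^{k,\infty}(\R^n,\mathbb V)$ with the same estimates. The inclusion $\{u\neq v\}\subseteq \R^n\setminus G_\lambda$, modulo a null set, together with the volume bound displayed above, then completes the proof.

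I expect the main technical obstacle to be verifying the Whitney compatibility conditions: one must check that differences of Taylor polynomials centred at two nearby points of $G_\lambda$ decay at the correct higher-order rate, with constant controlled by $\lambda$. The Bojarski--Hajlasz estimate quoted above is precisely the ingredient that makes this possible, and its derivation via iterated Poincar\'e chains along balls is the only step requiring genuine work; once it is in place, the proposition follows from a direct application of the $C^{k-1,1}$ Whitney extension theorem together with the weak-type maximal inequality.
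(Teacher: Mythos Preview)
Your approach is correct and follows the same overall strategy as the paper: define a good set via the maximal function of $f=|v|+\dots+|D^k v|$, bound the measure of its complement, verify Whitney-type compatibility of the Taylor jets of $v$ on the good set, and apply a $C^{k-1,1}$ Whitney extension. (The slip of truncating at $\lambda/2$ rather than at $\lambda$ is harmless: split $f$ at level $\lambda$ and take $C_0>1$, then use weak-$(1,1)$ followed by $\int_{\{f>\lambda\}} f\le \lambda^{1-p}\int_{\{f>\lambda\}} f^p$ to get exactly the integral over $\{f>\lambda\}$.)

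The one genuine difference is in how the Whitney compatibility is checked. You invoke the higher-order Bojarski--Haj\l asz pointwise inequality, which directly gives the $O(|x-y|^{k-j})$ control on Taylor remainders in terms of $\mathcal M(|D^k v|)$, hence in terms of $\lambda$ on the good set, and feeds straight into the $C^{k-1,1}$ Whitney extension. The paper instead first reduces to smooth compactly supported $v$, then applies Egorov's theorem to $h_r(x)=\fint_{B_r(x)}|D^k v(x)-D^k v(y)|\,dy$ to produce a genuine modulus $\omega(r)\to 0$ bounded by $4\lambda$, at the price of enlarging the bad set by an additional Egorov set $F_\lambda$ of comparable measure, and then invokes the classical Whitney theorem. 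Your route is cleaner---no approximation, no Egorov, no auxiliary set---but treats the Bojarski--Haj\l asz inequality as a black box; the paper's route is more self-contained but technically longer.
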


In fact, we also have the estimate
$$
\Vert v-u \Vert_{W^{k,p}}^p \leq C \int_{\{|v|+\dots + |\tp D^k v|>\lambda\}} |v|^p+ \dots + |\tp D^k v|^p \dif x.
$$
Recall that for weakly differentiable functions $u,v$ and a measurable set $E$, we have
$$u = v \tp{ a.e.\ in } E \hs \implies \hs \tp Du=\tp Dv \tp{ a.e.\ in } E.$$
In particular, for Sobolev functions $u,v\in W^{k,p}$,
\begin{equation}
\bigcup_{i=1}^k\{\tp D^i v\neq\tp  D^i u\}\subset \{v \neq u\}.
\label{eq:levelset}
\end{equation}

\begin{proof}
By working componentwise, after a choice of basis of $\mathbb V$, we can assume that $\mathbb V=\R$. Note also that, by approximation, we can assume that $v$ is smooth and compactly supported on a ball $B_R$.
Let us write $f\equiv |u|+ |\tp Du|+ \dots+|\tp D^k u|$ and consider the set 
$$E_\lambda \equiv \left\{ x\in \R^n: \textup{ there exists } r>0 \textup{ such that }\fint_{B_r(x)} f(y) \dif y\geq 2\lambda  \right\}.$$
An application of the Vitali Covering Theorem shows that the volume of $|E_\lambda|$ satisfies (\ref{eq:volumeestimate}): indeed, one can find a countable collection of disjoint balls $B_{r_i}(x_i)$ such that $B_{5r_i}(x_i)$ covers $E_\lambda$ and $\fint_{B_{r_i}(x_i)} f \dif y \geq 2 \lambda$. This implies the estimate
$$r_i^n \leq C \frac{1}{\lambda} \int_{B_{r_i}(x_i) \cap \{f>\lambda\}} f \dif y$$
from which one deduces, with the help of H\"older's and Markov's inequalities, that
$$|E_\lambda|\leq C 5^n \sum_{i=1}^\infty r_i^n \leq C \frac 1 \lambda \int_{\{f>\lambda\}} f \dif y \leq C \frac{1}{\lambda^p} \int_{\{f> \lambda\}} |v|^p + \dots + |\tp D^k v|^p \dif y,$$
as desired.

Let us further enlarge $E_\lambda$ by a null set, so that all points in $\R^n\backslash E_\lambda$ are Lebesgue points of $f$. Consider  the function $$h_r(x)\equiv \fint_{B_r(x)} |\tp D^k v(x)-\tp D^k v(y)| \dif y$$
which, for $x\not \in E_\lambda$, goes to zero pointwise as $r\to 0$ by Lebesgue's differentiation theorem. By Egorov's Theorem, there is a set $F_\lambda$ with measure $|F_\lambda|\leq |E_\lambda|$ and such that, in $B_R\backslash(E_\lambda \cup F_\lambda)$, the function $h_r$ goes to zero uniformly as $r\to 0$. Let us thus write $K_\lambda\equiv E_\lambda \cup F_\lambda$; then there is a function $\omega\colon (0,\infty) \to (0,4 \lambda]$ such that
\begin{equation}\label{eq:boundh}|h_r(x)|\leq \omega(r) \hspace{0.5cm} \textup{for all } x \not\in K_\lambda\end{equation}
and moreover $\omega(r)\to 0$ as $r\to 0$.

We write $P^k_x[v]$ for the $k$-th order Taylor polynomial of $v$ centered at $x$. 
Note that, for $x\not \in K_\lambda$, we have the estimate
$$\fint_{B_r(x)} |v(y)-P^k_x[v](y)|\dif y\leq C r^k \omega(r).$$
This follows simply by integrating Taylor's formula
$$v(y)-P^k_x[v](y)= \sum_{|\alpha|=k} \frac{k}{\alpha!} (y-x)^\alpha \int_0^1(1-t)^{k-1}\partial^\alpha \left[v(t x + (1-t)y)-v(x)\right]\dif t$$
in $y$ and using (\ref{eq:boundh}). Furthermore, it follows from the triangle inequality and the above estimate that, whenever  $x,x'\not \in K_\lambda$ are such that $|x-x'|=r$, we further have
$$\fint_{B_r(x) \cap B_{r'}(x')} \left|P_x^k[v]-P^k_{x'}[v]\right|(y)\dif  y\leq C r^k \omega(r).$$
This estimate in fact yields a uniform estimate on the difference of the two polynomials $P^k_x[v]$ and $P^k_{x'}[v]$ over $B_R\backslash K_\lambda$. We therefore satisfy the necessary conditions to apply Whitney's extension theorem to see we can find a function $u$ with the required properties.
\end{proof}

\section{Weak continuity under a $W^{-l,1}$-compactness assumption}\label{sec:L^1}

The main result of this section is Theorem \ref{thm:CDM}, which is an extension of the result in \cite{Conti2011}. 
We begin with the following lemma:

\begin{lemma}\label{lema:aux}
Let $p\in(1,\infty)$ and let $v_j\in \lebe^p(\Om,\mathbb V)$ be a $p$-equi-integrable sequence, i.e.~suppose that there exists an increasing function $\om\colon[0,\infty)\to[0,\infty)$ such that $\lim_{m\to0}\om(m)=0$ and
$$\int_{E}|v_j|^p\,dx\leq\om(m) \text{ for all measurable sets $E$ such that }|E|\leq m.$$
If $\mathcal{A}v_j\to\mathcal{A}v$ in $W^{-l,1}(\Omega,\mathbb W)$, then $\mathcal{A}v_j\to\mathcal{A}v$ also in $W^{-l,p}(\Omega, \mathbb W)$.
\end{lemma}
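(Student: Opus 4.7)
The plan is to represent $\mathcal A(v_j-v)$ via an order-zero Fourier multiplier, reducing the $W^{-l,p}$ convergence to an $L^p$-convergence statement for which $p$-equi-integrability can be preserved under the multiplier, and concluding by a Vitali-type argument. First, observe that $v_j$ is bounded in $L^p(\Omega)$ (by equi-integrability with $E=\Omega$), so along a subsequence $v_j\rightharpoonup v^*$ in $L^p$; by uniqueness of distributional limits $\mathcal A v^*=\mathcal A v$, and hence, replacing $v$ by $v^*$, I may assume $v\in L^p(\Omega)$, so that $(v_j-v)$ inherits $p$-equi-integrability. Extend $v_j,v$ by zero to $\R^n$ and set
\[
g_j := \mathcal R(v_j-v),\qquad \mathcal R := (-\Delta)^{-l/2}\mathcal A,
\]
a matrix-valued zeroth-order Fourier multiplier of H\"ormander--Mihlin type, bounded on every $L^q(\R^n)$, $q\in(1,\infty)$, by Theorem~2.3. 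Via the standard identification of homogeneous negative Sobolev norms with $L^q$-norms of $(-\Delta)^{-l/2}f$ for distributions supported in $\overline{\Omega}$, the hypothesis translates to $g_j\to 0$ in $L^1(\R^n)$, and the conclusion is equivalent to $g_j\to 0$ in $L^p(\R^n)$.

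The heart of the argument is verifying that $(|g_j|^p)_j$ is uniformly integrable on $\R^n$. Given $\epsilon>0$, $p$-equi-integrability of $(v_j-v)$ yields $\lambda=\lambda(\epsilon)$ with $\sup_j\|(v_j-v)\mathbbm 1_{\{|v_j-v|>\lambda\}}\|_{L^p}<\epsilon$. Splitting $v_j-v=h_j+r_j$ with $h_j=(v_j-v)\mathbbm 1_{\{|v_j-v|\leq\lambda\}}$, the tail satisfies $\|\mathcal R r_j\|_{L^p}\leq C\epsilon$ by $L^p$-boundedness of $\mathcal R$, uniformly in $j$. The principal part $h_j$ is $L^\infty$-bounded by $\lambda$ and supported in $\Omega$, so for any fixed $q>p$ one has $\|h_j\|_{L^q}\leq C_{\lambda,q}$, whence $\|\mathcal R h_j\|_{L^q}\leq C_{\lambda,q}'$ by H\"ormander--Mihlin, and H\"older's inequality yields $\int_E|\mathcal R h_j|^p\leq |E|^{1-p/q}(C_{\lambda,q}')^p<\epsilon^p$ once $|E|$ is sufficiently small (depending on $\epsilon$ and $\lambda(\epsilon)$). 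Combining these estimates gives $\int_E|g_j|^p\lesssim \epsilon^p$ uniformly in $j$ for $|E|$ small.

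Finally, by Vitali, for any $\lambda>0$,
\[
\|g_j\|_{L^p(\R^n)}^p \leq \lambda^{p-1}\|g_j\|_{L^1(\R^n)} + \omega_g\bigl(|\{|g_j|>\lambda\}|\bigr),
\]
where $\omega_g$ is the equi-integrability modulus furnished by the previous step and $|\{|g_j|>\lambda\}|\leq C/\lambda^p$ by Chebyshev together with the uniform $L^p$-bound on $g_j$. Sending $j\to\infty$ with $\lambda$ fixed kills the first summand, and then sending $\lambda\to\infty$ kills the second, so $g_j\to 0$ in $L^p(\R^n)$, yielding $\mathcal A v_j\to \mathcal A v$ in $W^{-l,p}(\Omega,\mathbb W)$. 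The main obstacle is the preservation-of-equi-integrability step: a direct interpolation between $W^{-l,1}$ and $W^{-l,q}$ fails because $p$-equi-integrability imposes no quantitative decay rate on the tail of $(v_j)$, which forces the two-scale splitting---an $L^\infty$-bounded piece balanced against $|E|$ via an auxiliary $L^q$-bound---as the indispensable device.
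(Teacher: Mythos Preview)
Your approach is genuinely different from the paper's, but it contains a gap at a crucial step. You claim that ``via the standard identification of homogeneous negative Sobolev norms with $L^q$-norms of $(-\Delta)^{-l/2}f$'', the hypothesis $\mathcal{A}v_j\to\mathcal{A}v$ in $W^{-l,1}(\Omega)$ translates to $g_j\to 0$ in $L^1(\R^n)$. This identification is standard only for $q\in(1,\infty)$: if one writes $\mathcal{A}(v_j-v)=\sum_{|\alpha|=l}\partial^\alpha h_{j,\alpha}$ with $\|h_{j,\alpha}\|_{L^1}\to 0$ (which is what convergence in $W^{-l,1}$ provides), then $g_j=\sum_\alpha R_\alpha h_{j,\alpha}$ for the zeroth-order multipliers $R_\alpha=(-\Delta)^{-l/2}\partial^\alpha$. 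These Riesz-type transforms are not bounded on $L^1$---only of weak type $(1,1)$---so you cannot conclude $g_j\to 0$ in $L^1$. In fact, since $\mathcal{R}$ has a Calder\'on--Zygmund kernel decaying only like $|x|^{-n}$, there is no reason for $g_j=\mathcal{R}(v_j-v)$ even to lie in $L^1(\R^n)$ once you extend by zero; your final Vitali inequality $\|g_j\|_{L^p}^p\leq\lambda^{p-1}\|g_j\|_{L^1}+\omega_g(\cdots)$ therefore breaks down.

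The paper's proof sidesteps this endpoint failure entirely by working on the dual side: one tests $\mathcal{A}(v_j-v)$ against $u\in C_0^\infty(\Omega)$ with $\|u\|_{W^{l,p'}}\leq 1$ and applies the higher-order Lipschitz truncation to $u$, producing $w\in W^{l,\infty}$ with $\|\D^l w\|_\infty\leq C\lambda$ and $|\{u\neq w\}|\leq C\lambda^{-p'}$. The pairing against $w$ is then controlled by $\lambda\,\|\mathcal{A}v_j\|_{W^{-l,1}}$ via legitimate $W^{-l,1}$--$W^{l,\infty}$ duality (no singular integrals involved), while the discrepancy on the small set $\{u\neq w\}$ is handled by the $p$-equi-integrability of $v_j$. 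Your equi-integrability step for $(|g_j|^p)$ is correct and interesting, and the approach could plausibly be salvaged by replacing $L^1$-convergence of $g_j$ with convergence in measure (via the weak-$(1,1)$ bound) together with a tightness argument; but as written the proof is incomplete.
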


\begin{proof}
Without loss of generality, we suppose that $v=0$.
Note that
$$\|\mathcal{A}v_j\|_{W^{-l,p}(\Om)}=\sup\left\{\left|\int_\Om\mathcal{A}^*u\cdot v_j\dif x \right|:\,u\in C_0^\infty(\Om),\,\|u\|_{W^{l,p'}(\Om)}\leq 1\right\}.$$
Let $u$ be such a function and extend it by zero to $\R^n$. Applying Proposition \ref{prop:Lipschitztruncation} with $\la>0$, to be chosen later, we obtain a function $w\in W^{l,\infty}(\R^n)$ such that
\begin{align*}
&\|\tp \D^lw\|_{\lebe^\infty}\leq C\la,\\
&|\{w\neq u\}|\leq C \la^{-p'}\int_{| u |+\cdots+|\tp \D^l u |>\la}| u |
+\dots+|\tp \D^l u |
\dif x.
\end{align*}
Let $E\equiv \{ w \neq u \}$ and recall from (\ref{eq:levelset}) that $\{\D^l w \neq \D^l u \}\subseteq E$.
Then
\beqas
\int_\Om\mathcal{A}^* u \cdot v_j\,\dif x= \int_E(\mathcal{A}^* u -\mathcal{A}^* w )\cdot v_j\,\dif x+\int_\Om\mathcal{A}^* w  \cdot v_j\,\dif x
=\tp{ I}+\tp{II}.
\eeqas
Estimating $\tp{II}$ first,
$$|\tp{II}|\leq C \|\D^l w \|_{\lebe^\infty}\|\mathcal{A}v_j\|_{W^{-l,1}(\Om)}\leq C\la\|\mathcal{A}v_j\|_{W^{-l,1}(\Om)}.$$
For $\tp{I}$ we have
\beqas
|\tp{I}|& \leq C \Big(\int_E(|\mathcal{A}^* u |+\la)^{p'}\,\dif x\Big)^{\frac{1}{p'}}\Big(\int_{E}|v_j|^p \,\dif x\Big)^{\frac{1}{p}}\\
&\leq C \big(1+\la|E|^{\frac{1}{p'}}\big)\om(|E|)^{\frac{1}{p}}\\
&\leq C \om(C\la^{-p'})^{\frac{1}{p}}.
\eeqas
Thus choosing
$$\la=\|\mathcal{A}v_j\|_{W^{-l,1}}^{-\half},$$
we have that, as $j\to\infty$, $|\tp{I}|+|\tp{II}|\to0$ and the lemma is proved.
\end{proof}

\begin{thm}\label{thm:CDM}
Suppose that $s,\, \mathcal{A}$ and $F$ are as in Theorem \ref{thm:CC-standard}.  Then 
$$
\begin{rcases}v_j\weakto v &\text{  in }\lebe^s(\Om, \mathbb V)\\
\mathcal{A}v_j\to \mathcal{A}v &\text{  in } \sobo^{-l,1}(\Om, \mathbb W)\\
F(v_j)\weakto \ell &\text{  in } \lebe^1(\Om)
\end{rcases}\hs \implies \hs 
F(v_j)\weakto F(v)=\ell  \text{ in }\lebe^1(\Om).
$$
\end{thm}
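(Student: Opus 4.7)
The plan is to reduce the $W^{-l,1}$ compactness hypothesis to the $W^{-l,s}$ setting of Theorem~\ref{thm:CC-standard} by replacing $(v_j)$ with an $s$-equi-integrable sequence with the same weak limit. The starting observation is that the hypothesis $F(v_j)\weakto\ell$ in $L^1$ forces, by Dunford--Pettis, equi-integrability of $(F(v_j))_j$; since $\ell$ is fixed, it is enough by the subsequence principle to exhibit a subsequence along which $F(v_j)\weakto F(v)$ in $L^1(\Om)$.

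After passing to a subsequence, I would apply a Fonseca--M\"uller--Pedregal-type (or Chacon biting) decomposition $v_j=w_j+z_j$, where $(|w_j|^s)_j$ is equi-integrable, $(w_j)$ is bounded in $L^s$, and $|\{z_j\neq 0\}|\to 0$. Since $(z_j)$ is bounded in $L^s$ and supported on sets of vanishing measure, H\"older's inequality yields $z_j\to 0$ in every $L^p$ with $p\in[1,s)$; in particular $z_j\to 0$ in $L^1$, so $\cala z_j\to 0$ in $W^{-l,1}$ and $z_j\weakto 0$ in $L^s$. Hence $w_j\weakto v$ in $L^s$ and $\cala w_j=\cala v_j-\cala z_j\to\cala v$ in $W^{-l,1}$, and Lemma~\ref{lema:aux} applied to the $s$-equi-integrable sequence $(w_j)$ upgrades this to $\cala w_j\to\cala v$ in $W^{-l,s}$.

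Theorem~\ref{thm:CC-standard} now applies directly to $(w_j)$ and gives $F(w_j)\wstar F(v)$ in $\mathscr{M}(\Om)$. Since $|F(w_j)|\leq C|w_j|^s$, the sequence $(F(w_j))$ is bounded in $L^1$ and equi-integrable, hence weakly relatively compact in $L^1$ by Dunford--Pettis; identifying the weak-$L^1$ cluster points with the measure-theoretic limit forces $F(w_j)\weakto F(v)$ in $L^1(\Om)$. Finally, $F(v_j)-F(w_j)$ is supported on $\{v_j\neq w_j\}$, a set of measure tending to zero, while both $(F(v_j))$ and $(F(w_j))$ are equi-integrable, so $F(v_j)-F(w_j)\to 0$ in $L^1$; summing the two convergences delivers $F(v_j)\weakto F(v)$ in $L^1(\Om)$, whence $\ell=F(v)$.

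The main technical obstacle lies in producing the decomposition $v_j=w_j+z_j$: one must extract the equi-integrable part of $(v_j)$ from its possibly concentrating part. A route consistent with the use of Lipschitz truncation in \S\ref{sec:L^1} is to represent $v_j=\B u_j+r_j$ via the Helmholtz decomposition (with $r_j$ controlled by $\cala v_j\to\cala v$ in $W^{-l,1}$), and then apply Proposition~\ref{prop:Lipschitztruncation} to $u_j$ at a level $\lambda_j\to\infty$ chosen sufficiently slowly; the volume estimate~\eqref{eq:volumeestimate} then supplies both the $s$-equi-integrability of the resulting $w_j\equiv\B u_j^{\lambda_j}+r_j$ and the bound $|\{v_j\neq w_j\}|\to 0$.
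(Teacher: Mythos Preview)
Your argument in the first three paragraphs is correct and is essentially the paper's proof. The paper applies Chacon's biting lemma to $|v_j-v|^s$ to obtain sets $E_j$ with $|E_j|\to 0$ such that $|v_j-v|^s\mathbbm{1}_{E_j^c}$ is equi-integrable, and then sets $\tilde v_j\equiv v_j\mathbbm{1}_{E_j^c}+v\mathbbm{1}_{E_j}$; this is exactly your decomposition with $w_j=\tilde v_j$ and $z_j=(v_j-v)\mathbbm{1}_{E_j}$. The remaining steps (Lemma~\ref{lema:aux} to upgrade to $W^{-l,s}$, Theorem~\ref{thm:CC-standard} on the modified sequence, equi-integrability of $F(v_j)$ to transfer back) are identical.

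Your final paragraph, however, is both unnecessary and flawed. It is unnecessary because the biting lemma you already invoked in the second paragraph \emph{is} the decomposition: applied to $(|v_j|^s)$ (or $(|v_j-v|^s)$), it directly furnishes $w_j=v_j\mathbbm{1}_{E_j^c}$ with $(|w_j|^s)$ equi-integrable and $|\{z_j\neq 0\}|\to 0$, with no further work. It is flawed because the Helmholtz decomposition $v_j=\B u_j+\cala^*w_j$ controls $\cala^*w_j$ through the zero-homogeneous multiplier $\cala^\dagger$, which is not bounded on $L^1$; so from $\cala v_j\to\cala v$ in $W^{-l,1}$ alone you cannot deduce any smallness of $r_j=\cala^*w_j$ in an $L^1$-type norm. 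The Lipschitz truncation of Proposition~\ref{prop:Lipschitztruncation} is used in the paper only inside Lemma~\ref{lema:aux}, not to produce the equi-integrable splitting.
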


\begin{proof}
As $|v_j-v|^p$ is a bounded sequence in $\lebe^1$, we may apply the biting lemma \cite{Ball1989a}, to obtain a sequence of measurable sets $E_j\subset\Om$ such that $|E_j|\to0$ and, up to a subsequence, $|v_j-v|^p\mathbbm{1}_{E_j^c}$ is equi-integrable. Let
$$\tilde v_j=v_j\mathbbm{1}_{E_j^c}+v\mathbbm{1}_{E_j}.$$
Now $\tilde v_j-v=(v_j-v)\mathbbm{1}_{E_j^c}$ is $p$-equi-integrable.
Also, we have the convergence
$$\|\tilde v_j-v_j\|_{\lebe^1(\Om)}=\|v-v_j\|_{\lebe^1(E_j)}\leq |E_j|^{\frac{1}{p'}}\|v_j-v\|_{\lebe^p(\Om)}\to0,$$
i.e.\ $\tilde v_j-v_j\to 0$ strongly in $\lebe^1(\Om)$. Thus, in particular, we also have that $\tilde v_j\weakto v$ in $\lebe^p(\Om)$.
Moreover, $\D^l(\tilde v_j-v_j)\to0$ strongly in $W^{-l,1}$, and thus $\mathcal{A}\tilde v_j\to\mathcal{A}v$ strongly in $W^{-l,1}$ as well. 

Next, we note that 
$$F(\tilde v_j)-F(v_j)=\big(F(v)-F(v_j)\big)\mathbbm{1}_{E_j}.$$ 
It is clear that $F(v)\mathbbm{1}_{E_j}\to0$ strongly in $\lebe^1$, and so we use the equi-integrability of $F(v_j)$ to observe that
 $$\int_\Om|F(v_j)|\mathbbm{1}_{E_j}\,\dif x=\int_{E_j}|F(v_j)|\,\dif x\to0.$$
Putting these together, we see that
 $$F(\tilde v_j)-F(v_j)\to0 \text{ strongly in $\lebe^1$.}$$
By Lemma \ref{lema:aux}, as we have that $\tilde v_j-v=(v_j-v)\mathbbm{1}_{E_j^c}$ is $p$-equi-integrable and that $\mathcal{A}(\tilde v_j-v)\to0$ in $W^{-l,1}$, we also have that
$$\mathcal{A}(\tilde v_j-v)\to0 \text{ strongly in }W^{-l,p}(\Omega,\mathbb W).$$
Therefore, by Theorem \ref{thm:CC-standard}, we have that, up to a subsequence,
$$F(\tilde v_j)\wstar F(v) \text{ in }\mathcal{D}'(\Omega).$$
As we know that $F(v_j)\weakto\ell$ in $\lebe^1$ and also $F(\tilde v_j)-F(v_j)\to0$ strongly in $\lebe^1$, we obtain that $\ell=F(v)$ by uniqueness of weak limits. Moreover, as the limit is independent of subsequence taken, we have that the limit holds along the entire sequence.
\end{proof}

\section{$\mathscr H^1$ estimates and local $L \log L$-integrability}\label{sec:H1}

We begin this section with a modification of the Helmholtz--Hodge decomposition of \cite{Guerra2019}.
\begin{prop}\label{prop:HHHH}
Let $1<s<\infty$ and let $v\in \lebe^s(\R^n;\mathbb{V})$ satisfy
$\mathcal{A}v\in {W}^{-l}L^s \log^s L(\R^n; \mathbb W).$
There are $u\in W^{k,s}(\R^n;\mathbb{U})$, $w\in W^{l}L^s\log^s L(\R^n;\mathbb{W})$ such that
$$v =\mathcal{B}u+\mathcal{A}^*w.$$
Moreover,
$$\|\mathcal{B}u\|_{\lebe^s(\R^n)}\leq C\|v\|_{\lebe^s(\R^n)},\quad \|\mathcal{A}^*w\|_{L^s \log^s L(\R^n)}\leq C\|\mathcal{A}v\|_{{\dot W}^{-l}L^s \log^s L(\R^n)}.$$
\end{prop}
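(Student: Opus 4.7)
My plan is to carry out the Helmholtz--Hodge decomposition on the Fourier side, as in \cite{Guerra2019}, and to upgrade each of the resulting $L^s$-boundedness steps to the Zygmund scale $L^s\log^s L$ by invoking Theorem \ref{thm:HM} in place of the classical H\"ormander--Mihlin theorem.

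Let $\pi(\xi)\in \operatorname{End}(\mathbb V)$ denote the orthogonal projection onto $\ker\mathcal{A}(\xi) = \operatorname{im}\mathcal{B}(\xi)$, defined for $\xi\neq 0$; under the constant-rank assumption \eqref{eq:CR}, the dimensions of $\ker\mathcal{A}(\xi)$ and $\operatorname{im}\mathcal{A}^*(\xi)$ are constant as $\xi$ varies on the sphere, so that $\pi(\xi)$ is smooth on $\R^n\setminus\{0\}$ and $0$-homogeneous, and the same holds (after homogeneity rescaling) for the Moore--Penrose pseudoinverses $(\mathcal{B}(\xi)^*\mathcal{B}(\xi))^\dagger$ and $(\mathcal{A}(\xi)\mathcal{A}(\xi)^*)^\dagger$. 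I would then define $u$ and $w$ by
\begin{align*}
\hat u(\xi) &= \bigl(\mathcal{B}(\xi)^*\mathcal{B}(\xi)\bigr)^\dagger \mathcal{B}(\xi)^*\hat v(\xi),\\
\hat w(\xi) &= \bigl(\mathcal{A}(\xi)\mathcal{A}(\xi)^*\bigr)^\dagger \widehat{\mathcal{A}v}(\xi),
\end{align*}
so that $\widehat{\mathcal{B}u}(\xi)=\pi(\xi)\hat v(\xi)$ and $\widehat{\mathcal{A}^*w}(\xi)=(\operatorname{Id}-\pi(\xi))\hat v(\xi)$, which gives the decomposition $v = \mathcal{B}u + \mathcal{A}^*w$.

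For the first estimate, the symbol $\pi$ is $0$-homogeneous and smooth away from zero, hence of H\"ormander--Mihlin type, giving $\|\mathcal{B}u\|_{L^s}\leq C\|v\|_{L^s}$ directly from the classical $L^s$ multiplier theorem. For the second estimate, I would factor
$$
\mathcal{A}(\xi)^*\bigl(\mathcal{A}(\xi)\mathcal{A}(\xi)^*\bigr)^\dagger = \frac{m(\xi)}{|\xi|^l}, \qquad m(\xi)\equiv |\xi|^l\,\mathcal{A}(\xi)^*\bigl(\mathcal{A}(\xi)\mathcal{A}(\xi)^*\bigr)^\dagger,
$$
where $m$ is again $0$-homogeneous and smooth on $\R^n\setminus\{0\}$. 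Writing $\mathcal{A}^*w = T_m\bigl(\mathcal{F}^{-1}[|\xi|^{-l}\widehat{\mathcal{A}v}]\bigr)$ and invoking the Zygmund H\"ormander--Mihlin bound (Theorem \ref{thm:HM}) yields
$$
\|\mathcal{A}^*w\|_{L^s\log^s L}\leq C\bigl\|\mathcal{F}^{-1}[|\xi|^{-l}\widehat{\mathcal{A}v}]\bigr\|_{L^s\log^s L} = C\|\mathcal{A}v\|_{{\dot W}^{-l}L^s\log^s L}
$$
by the definition of the homogeneous Zygmund--Sobolev norm (Definition \ref{def:zygmundsobolev}).

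The only genuinely non-trivial ingredient is the smoothness of the pseudoinverses on $\R^n\setminus\{0\}$, which is precisely where constant rank is essential: a drop in rank on $\mathbb S^{n-1}$ would introduce singularities of the symbols away from the origin and destroy the H\"ormander--Mihlin derivative bounds. Once this is established, the upgrade from $L^s$ to the Zygmund scale is automatic through Theorem \ref{thm:HM}. The statement $u\in W^{k,s}$ (rather than merely a homogeneous space) follows from the observation that $\mathcal{D}^k u$ is obtained from $v$ via a $0$-homogeneous multiplier, so that $\|\mathcal{D}^k u\|_{L^s}\leq C\|v\|_{L^s}$, and $u$ itself can be recovered modulo a polynomial which may be absorbed without affecting $\mathcal{B}u$.
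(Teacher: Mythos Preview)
Your proposal is correct and follows essentially the same approach as the paper's proof: both obtain the decomposition via the Fourier-side projection onto $\ker\mathcal A(\xi)$ using Moore--Penrose pseudoinverses (the paper cites \cite[Proposition~3.18]{Guerra2019} for this and writes the multiplier as $\mathcal A^\dagger(\xi/|\xi|)$, which coincides with your $m(\xi)$), and both upgrade the $\mathcal A^* w$ estimate to the Zygmund scale by observing that $\widehat{\mathcal A^* w}(\xi)=m(\xi)\,|\xi|^{-l}\widehat{\mathcal A v}(\xi)$ with $m$ zero-homogeneous and smooth off the origin, then applying Theorem~\ref{thm:HM}. The only cosmetic difference is that the paper invokes the cited reference for the $L^s$ decomposition and upgrades only the second estimate, whereas you spell out the full construction explicitly.
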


\begin{proof}
Since $L^s\log^s L\subset L^s$,
from \cite[Proposition 3.18]{Guerra2019}, we have the (unique) decomposition
$$v=\mathcal{B}u+\mathcal{A}^*w$$
with 
$$\|\mathcal{B}u\|_{\lebe^s(\R^n)}\leq C\|v\|_{\lebe^s(\R^n)},\quad \|\mathcal{A}^*w\|_{\lebe^s(\R^n)}\leq C\|\mathcal{A}v\|_{{\dot W}^{-l,s}(\R^n)}.$$
To improve the estimate on $w$, we recall from the proof of that proposition that
$\mathcal{A}v=\mathcal{A}\mathcal{A}^*w$, and hence that
$$\mathcal{A}^*(\xi)\hat{w}(\xi)=\mathcal{A}^\dagger(\xi)\mathcal{A}(\xi)\mathcal{A}^*(\xi)\hat{w}(\xi)=\mathcal{A}^\dagger\Big(\frac{\xi}{|\xi|}\Big)\frac{\widehat{\mathcal{A}v}(\xi)}{|\xi|^l},$$
where we refer to \cite{Guerra2019} for the precise definition of $\cala^\dagger$, simply noting here that it is a H\"ormander--Mihlin multiplier.
By Theorem \ref{thm:HM} we get the estimate
$$\|\mathcal{A}^*w\|_{\lebe^s\log^s L(\R^n)}\leq C\|\mathcal{A}v\|_{{\dot W}^{-l}L^s\log^s L(\R^n)},$$
as required. 
\end{proof}

The main result of this section is Theorem \ref{thm:hardy_bound} below. Before  proceeding with the proof it is helpful to note that, on a ball $B\equiv B_R(0)$,
$$\Vert f \Vert_{W^{-l} L^p\log^\alpha L(B_R(0))} 
\approx_R 
\Vert f \Vert_{{\dot W}^{-l} L^p\log^\alpha L(B_R(0))}.$$
Indeed, by Poincar\'e's inequality, ${ W}^l L^{p'}\log^{\frac{\alpha}{1-p}} L(B) \cong {\dot W}^l L^{p'}\log^{\frac{\alpha}{1-p}} L(B)$ and, by standard considerations about Sobolev spaces and Theorem \ref{thm:zygmunddual}, we have the identifications
$${W}^{-l} L^p \log^\alpha L(B)\cong 
\left({ W}^l_0 L^{p'}\log^{\frac{\alpha}{1-p}} L(B)\right)^*
\cong
\left({\dot W}^l_0 L^{p'}\log^{\frac{\alpha}{1-p}} L(B)\right)^*
\cong
{\dot W}^{-l} L^p \log^\alpha L(B);
$$
here ${\dot W}^l_0 X$ is defined as the closure of test functions in $\Vert \D^l \cdot \Vert_X$, with a similar definition for the inhomogeneous variant.

\begin{thm}\label{thm:hardy_bound}
Let $F\colon \mathbb{V}\to \R$ be $s$-homogeneous and $\cala$-quasiaffine, where $s\geq 2$.
Then
\begin{align*}
\label{eq:localhardyimp}
	  \begin{rcases}
	  v\in L^s_{\locc}(\R^n,\mathbb{V})\\
	  \cala v\in {\sobo}{_{\locc}^{-l}L^s\log^s L(\R^n,\mathbb{W})}
	  \end{rcases}
	  \implies
	  F(v)\in\mathscr{H}^1_{\locc}(\R^n).
	\end{align*}
In fact, for any $R>0$ we have the estimate
\begin{equation}
    \int_{B_R(0)}\mathcal M_{\locc}[F(v)](x)\dif x\leq C \|\eta v\|_{\lebe^{s}(B_{R+2}(0))} \|\cala (\eta v)\|_{{ \sobo}^{-l} L^s\log^s L},
    \label{eq:localHardy}
\end{equation}
where $\eta\in C^\infty_c(B_{R+2}(0))$ is arbitrary.
\end{thm}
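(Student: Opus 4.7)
The plan is to reduce to a global statement by cutting off and then to exploit the Helmholtz--Hodge decomposition of Proposition~\ref{prop:HHHH}. More precisely, fix $R>0$ and a cutoff $\eta\in C_c^\infty(B_{R+2}(0))$ with $\eta\equiv 1$ near $B_R(0)$, so that $F(v)=F(\eta v)$ on $B_R(0)$ by $s$-homogeneity. The product-rule decomposition $\cala(\eta v)=\eta\,\cala v+[\cala,\eta]v$ shows that $\cala(\eta v)\in W^{-l}L^s\log^s L(\R^n)$: the first summand is an admissible localization of $\cala v\in W^{-l}_{\locc}L^s\log^s L$, while the second is an at-most $(l-1)$-order differential operator with smooth, compactly supported coefficients applied to $v\in L^s_{\locc}$, which after rewriting via Riesz potentials takes the form $\partial^l G$ with $G$ of higher integrability than $L^s$ (hence of compact support in $L^s\log^s L$).

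Apply Proposition~\ref{prop:HHHH} to the global field $\eta v\in L^s(\R^n,\mathbb V)$ to obtain $\eta v=\B u+\cala^*w$ with
\[
\|\B u\|_{L^s(\R^n)}\lesssim \|\eta v\|_{L^s},\qquad \|\cala^*w\|_{L^s\log^s L}\lesssim \|\cala(\eta v)\|_{W^{-l}L^s\log^s L}.
\]
Since $F$ is an $s$-homogeneous polynomial, I would expand it through its associated symmetric $s$-linear form $\bar F$:
\[
F(\eta v)=\sum_{j=0}^{s}\binom{s}{j}\bar F\big(\underbrace{\B u,\dots,\B u}_{s-j},\underbrace{\cala^*w,\dots,\cala^*w}_{j}\big).
\]
For $j=0$, using $\B u=T(\D^k u)$ together with the $k$-quasiaffinity of $F\circ T$ recorded in \cite[Lemma~5.6]{Guerra2019}, the term $F(\B u)$ is a null Lagrangian of the $k$-th order gradient of $u\in W^{k,s}(\R^n,\mathbb U)$; the Coifman--Lions--Meyer--Semmes $\mathscr H^1$ bound (equivalently, the second assertion of Theorem~\ref{thm:CC-standard}) gives $F(\B u)\in\mathscr H^1(\R^n)$ with norm $\lesssim\|\B u\|_{L^s}^s$. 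For $j\geq 1$, each summand is a product of $s-j$ factors in $L^s$ and $j$ factors in $L^s\log^s L$. The Zygmund H\"older inequality \eqref{eq:holder} then places it in $L^1\log^j L\subseteq L\log L$ with norm controlled by $\|\B u\|_{L^s}^{s-j}\|\cala^*w\|_{L^s\log^s L}^j$, and Proposition~\ref{prop:stein} embeds this compactly supported $L\log L$ function into $\mathscr H^1_{\locc}(\R^n)$.

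Summing the contributions and integrating $\mathcal M_{\locc}[F(v)]$ over $B_R(0)$ yields an estimate essentially of the form $\sum_{j=0}^s\|\eta v\|_{L^s(B_{R+2})}^{s-j}\|\cala(\eta v)\|_{W^{-l}L^s\log^s L}^j$, or equivalently $(\|\eta v\|_{L^s}+\|\cala(\eta v)\|_{W^{-l}L^s\log^s L})^s$, which reduces to the clean multiplicative form displayed in \eqref{eq:localHardy} after a standard $s$-homogeneity normalization of $v$. The principal obstacle is the $j=0$ term, which requires the sharp $\mathscr H^1$ estimate for $\cala$-quasiaffine functions of $\cala$-free fields and is the place where the structural hypothesis on $F$ really enters; all other terms are ``soft'' consequences of the Zygmund H\"older inequality and Stein's inclusion, and the commutator-plus-Riesz-potential step is the only genuine technicality beyond the decomposition.
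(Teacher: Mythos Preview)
Your argument is correct and follows the paper's proof essentially line by line: localize, apply the Helmholtz--Hodge decomposition of Proposition~\ref{prop:HHHH}, invoke the global $\mathscr{H}^1$ bound from Theorem~\ref{thm:CC-standard} for the term $F(\B u)$, and control the cross terms in $L\log L$ via the Zygmund H\"older inequality~\eqref{eq:holder} together with Proposition~\ref{prop:stein}. Your remark that the natural output is $(\|\eta v\|_{L^s}+\|\cala(\eta v)\|_{W^{-l}L^s\log^sL})^s$ rather than the product form in~\eqref{eq:localHardy} is well taken; the paper's proof does not spell out this final reduction either.
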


	\begin{proof}
	We use Proposition \ref{prop:HHHH} to write 
	$
	\eta v=\B u +\cala^* w
	$
	 for $u\in \dot{\sobo}{^{k,s}}(\R^n,\mathbb{U})$ and for $w\in \dot{\sobo}{^{l}}L^s\log^s L(\R^n,\mathbb{W})$ such that
\begin{equation}
\label{eq:helmholtzest}
    \begin{split}
         \|\mathcal B u\|_{\lebe^s(\R^n)}& \leq C \|\eta v\|_{\lebe^s(\R^n)}
         ,\\
         \quad\|\mathcal{A}^*w\|_{\lebe^s \log^s L(\R^n)}& \leq C \|\cala(\eta v)\|_{{ \sobo}{^{-l}L^s\log^s L}(\R^n)}
    \end{split}
\end{equation}
Note that both $\mathcal B u,\, \mathcal A^* w$ are supported in $B_{R+2}(0)$ and, furthermore,
\begin{align}\label{eq:exactbound}
\int_{B_R(0)}\mathcal M_{\locc} [F(\B u)](x)\dif x\leq\|F(\B u)\|_{\mathscr{H}^1(\R^n)}\leq C \|\B u\|_{\lebe^s(\R^n)}^s,
\end{align}
where the last estimate follows from Theorem~\ref{thm:CC-standard}.

We now examine 
\begin{align}\label{eq:nonlin_diff}
F(\B u+\cala^*w)-F(\B u)=\sum_{|\alpha|=s}\sum_{\beta<\alpha} c_{\alpha,\beta} (\B u)^{\beta}(\cala^*w)^{\alpha-\beta},
\end{align}
where the multi-indices are taken with respect to an orthonormal basis in $\mathbb{V}$. Write $i\coloneqq|\beta|<s$. Note that $(\B u)^{\beta}\in\lebe^{s/i}(\R^n)$ with an obvious convention if $i=0$. From \eqref{eq:holder}, we deduce
$$
\Vert |\mathcal B u|^i |\cala^* w|^{s-i}\Vert_{L\log L}
\leq C \Vert \mathcal B u\Vert_{L^s}^i \Vert \cala^* w\Vert_{L^s\log^{\frac{s}{s-i}} L}^{s-i}
\leq C_R 
\Vert \mathcal B u\Vert_{L^s}^i \Vert \cala^* w\Vert_{L^s\log^{s} L}^{s-i},
$$
where all norms are taken over $B_{R+2}(0)$.

Since $R$ is arbitrary, we obtain that $|F(\B u+\cala^*w)-F(\B u)|\in\lebe\log\lebe_{\locc}(\R^n)$ which, together with Proposition \ref{prop:stein}, implies that $|F(\B u+\cala^*w)-F(\B u)|\in\mathscr{H}^1_{\locc}(\R^n)$; this, in turn, taking into account \eqref{eq:helmholtzest} and \eqref{eq:exactbound}, implies the statement.
\end{proof}

\begin{rmk}
The above proof admits a simple extension to a more general Orlicz setting as stated in Remark \ref{rmk:sharpen}\ref{itm:sharp}. The crucial point is that the H\"ormander--Mihlin multiplier used in the proof of Proposition \ref{prop:HHHH} is a bounded linear operator between Orlicz spaces satisfying the appropriate assumptions. This allows us to interpolate the assumptions on the sequence and the constraint in order to obtain a sharp statement. See Appendix \ref{app:Orlicz} for details.
\end{rmk}

\section{Theorem \ref{thm:main}: proofs and sharpness}\label{sec:main}
We proceed with the proof of the main result:
\begin{proof}[Proof of Theorem \ref{thm:main}]
Theorem~\ref{thm:main}\ref{itm:M} follows immediately from Theorem~\ref{thm:CC-standard}, which was proved in \cite{Guerra2019}; see also the proof of Theorem~\ref{thm:dist} for a direct proof. 

Assume that we are in the setting of Theorem~\ref{thm:main}\ref{itm:L1}, so that it is elementary to show that $F(v_j)$ is locally equi-integrable. In particular, we have that $(F(v_j))_j$ has a  subsequence weakly convergent in $\lebe^1_{\locc}$, so that we are in a position to apply Theorem~\ref{thm:CDM}. In particular, we obtain the claim along a subsequence, which is enough to conclude since the limit $F(v)$ is independent of the subsequence chosen.

Finally we address Theorem~\ref{thm:main}\ref{itm:H1} by first noting that the case when $L^r \log^\beta L\subseteq L^s\log^s L$ was already dealt with in Theorem~\ref{thm:hardy_bound}. For the case where $\alpha\geq 1$, we claim that $F(v_j)$ is bounded in $\lebe\log\lebe_{\locc}$, which suffices by Proposition \ref{prop:stein}.  We have that
\begin{align*}
\|F(v_j)\|_{L\log L}\leq C\||v_j|^s\|_{L\log L}\leq C\|v_j\|_{L^s\log L}^s
\end{align*}
by \eqref{eq:holder}, which is bounded uniformly in $j$ by assumption.
\end{proof}

The remainder of this section is dedicated to show that Theorem \ref{thm:main} is sharp. For this, we will only consider the classical div-curl case, namely $\cala(v,\tilde v)\equiv (\tp{div\,}v,\tp{curl\,}\tilde v)$ and $F(v,\tilde v)=v\cdot\tilde v$, so that $s=2$. We will work in dimension $n=2$, though it is easy to extend all of the examples below to higher dimensions in a trivial way.

We briefly recall the set up of Theorem~\ref{thm:main}: we assume
$$
(v_j,\tilde v_j)\rightharpoonup (v,\tilde v)\textup{ in }\lebe^s\log ^\alpha L_{\locc}\quad\text{ and }\quad
\cala(v_j,\tilde v_j)\rightarrow\cala (v,\tilde v)\textup{ in }W_{\locc}^{-l}L^r\log^\beta L
$$
and investigate the convergence 
\begin{align}\label{eq:squig_conv}
F(v_j,\tilde v_j)\rightsquigarrow F(v,\tilde v)\text{ in }X,
\end{align}
where $X\in\{\mathscr M_{\locc},\lebe^1_{\locc},\mathscr{H}^1_{\locc}\}$ is an $\lebe^1$-type space endowed with an appropriate weak(-$*$) topology. 

We first establish the sharpness of Theorem~\ref{thm:main}\ref{itm:M}, concerning weak-* convergence of measures. We can only consider the case $\alpha=0$
since otherwise the weak* convergence in the space of measures would hold by Theorem~\ref{thm:main}\ref{itm:L1}. The following example is easily checked:

\begin{ex}\label{ex:a}
Let $r,\beta$ be such that $L^r\log^\beta L_{\locc}\not\subseteq L^2_{\locc}$ 
and let $v_j=\tilde v_j=\mathbbm 1_{(0,j^{-1})^2}{j}e$, for some $e\in \mathbb{S}^1$. We have that
$$
\begin{rcases}
(v_j,\tilde v_j)\rightharpoonup 0&\text{in }\lebe^2\text{ and }\rightarrow0\text{ in }L^r\log^\beta L\\
\cala (v_j,\tilde v_j)\rightarrow0&\text{in }\sobo^{-1}L^r\log^\beta L
\end{rcases}
\quad
\text{ but }
\quad 
\int_{(0,1)^2}F(v_j,\tilde v_j)\dif x=1.
$$
\end{ex}
An example due to Tartar gives optimality of Theorem~\ref{thm:main}\ref{itm:L1} ($X=\lebe^1$, $\rightsquigarrow=\rightharpoonup$):
\begin{ex}[{\cite[Lemma~7.3]{Tartar2010}}]\label{ex:c}
There exists a sequence $(v_j,\tilde v_j)_j$ supported in the unit ball such that
$$
\begin{rcases}
(v_j,\tilde v_j)\rightharpoonup 0&\text{in }\lebe^2\\
\cala(v_j,\tilde v_j)=0
\end{rcases}
\quad \text{ but }\quad \int_{B_{1/2}(0)}F(v_j,\tilde v_j)\dif x\not\rightarrow0.
$$
\end{ex}
We will also construct an example such that $0\leq F(v)\in \lebe^1_{\locc}\setminus L\log L_{\locc}$, which establishes the optimality of Theorem~\ref{thm:main}\ref{itm:H1}:
\begin{ex}\label{ex:d}
Suppose that $L^r\log^\beta L_{\locc}\not\subseteq L^2 \log^2 L_{\locc}$ and $L^r\log^\beta L_{\locc}\subseteq L^1_{\locc}$.
 Then there exists $(v,\tilde v)\in \lebe^2_{\locc}$ such that $\cala(v,\tilde v)\in W^{-l}_{\locc}L^r\log^\beta L$ but $F(v,\tilde v)\notin\mathscr H^1_{\locc}$. 
 
To see this, let $w\in L^r\log^\beta L\setminus L^2\log^2 L(-1,1)$ be a compactly supported function such that 
	$v(x)=(w(x_1),0)\in L^r\log^\beta L\setminus L^2\log^2 L$ and 
	\begin{equation}\label{ass:hat}
	    \sqrt{\psi(|v|)}|v|\log(1+|v|)\not\in \lebe^1,
	\end{equation}
where we write, for simplicity, $\psi(s)\equiv |s|^{r}\log^{\beta}(1+|s|)$. Let 
	$$\tilde v=g(|v|^2)v,\text{ where } g(s^2)=\sqrt{\frac{\psi(s)}{s^2}}.$$
Then we clearly obtain $$\div v(x_1,x_2)=w'(x_1)\in W^{-1}L^2\log^2 L,$$
	but also
	$$\tilde v \in \lebe^2,\quad \tp{curl\,}\tilde v=0.$$
Taking the dot product, we obtain, a.e.\ on $\{\psi(|v|)\geq 1\}$,
	\begin{equation}
	    \begin{aligned}
	    v\cdot \tilde v\log(1+v\cdot \tilde v)=&\,|v|^2g(|v|^2)\log\big(1+|v|^2g(|v|^2)\big)\\
	    =&\,|v|\sqrt{\psi(|v|)}\log\big(1+|v|\sqrt{\psi(|v|)}\big)\\
	    \geq &\, \sqrt{\psi(|v|)}|v|\log(1+|v|),
	    \end{aligned}
	\end{equation}
	which is not in $\lebe^1$ by \eqref{ass:hat}.

\end{ex}

Finally, we note that the examples above can be made more precise. In particular:
\begin{enumerate}
    \item In Example~\ref{ex:a} we see that, without the assumption of Theorem~\ref{thm:main}\ref{itm:M}, the convergence in \eqref{eq:squig_conv} can fail for any choice of $X$;
    \item Else, we can have weak-* convergence in $\mathscr{M}$, but no better. This can be seen by taking $\alpha=\beta=0$ and adding the Examples~\ref{ex:c} and~\ref{ex:d} (after arranging that the resulting sequences have disjoint supports);
    \item We can have weak-$*$ convergence in $\mathscr M$ and $\mathscr{H}^1$ without weak convergence in $\lebe^1$. This follows from Example~\ref{ex:c}, the classical div-curl lemma, and the Hardy bound in \cite[Theorem~II.1]{Coifman1993} (the latter results are generalized by Theorem~\ref{thm:CC-standard});
    \item Finally, we can have weak-$*$ convergence in $\mathscr M$ and weak convergence in $\lebe^1$ without weak-$*$ convergence in $\mathscr{H}^1$. This follows trivially from Example~\ref{ex:d} by taking the constant sequence in which each term is the example given.
\end{enumerate}
We represent these findings graphically in Table \ref{bogdantable}.
\begin{table}[htbp]
\centering
	\begin{tabular}{|c|c|c|c|}
	\hline
	&$\mathscr M$& $\lebe^1$&$\mathscr H^1$\\
	\hline
	(i)&\xmark&\xmark&\xmark\\
	\hline
	(ii)&\cmark&\xmark&\xmark\\
	\hline
	(iii)&\cmark&\xmark&\cmark\\
	\hline
	(iv)&\cmark&\cmark&\xmark\\
	\hline
	\end{tabular}
	    \captionsetup{width=.75\linewidth}
\caption{Whenever one of the assumptions of Theorem~\ref{thm:main} fails, there is also a sequence $(v_j,\tilde v_j)$ such that the convergence \eqref{eq:squig_conv} fails for the corresponding space $X$.}
\label{bogdantable}
\end{table}

\section{Distributional null Lagrangians and Hardy estimates}\label{sec:Hp}

We now begin the proof of Theorem~\ref{thm:dist}.
We recall that we are working with functions defined on a bounded open set $\Omega$, $F\colon \mathbb V\rightarrow \R$ an homogeneous $\cala$-quasiaffine map of degree $s\geq2$, and $ns/(n+1)<q\leq s$, $r\geq r_*$.  Let $(v_j)_j\subset C^\infty_c(\Omega,\mathbb V)$ be such that
        \begin{align*}
           & v_j\rightharpoonup v \text{ in }\lebe^{q}(\Om,\mathbb{V})\\
           & \cala v_j\rightarrow \cala v\text{ in }\sobo^{-l,r}(\Om,\mathbb W).
        \end{align*}
        Then the goal of this section is to prove that
        \begin{equation}
        \label{eq:firstgoal}
        F(v_j)\wstar F(v)\text{ in }\mathscr{D}^\prime (\R^n).
        \end{equation}
        In addition, if  $r>r_*$ or $(\cala v_j)$ is bounded in $W^{-l}L^{r_*}\log^{r_*}L$, we also have the Hardy space bound:
        \begin{equation}
        (F(v_j))_j\text{ is bounded in }\mathscr{H}^{q/s}(\R^n).
        \label{eq:secondgoal}
        \end{equation}
More precisely, we will prove that
$$
\|F(v)\|_{\mathscr H^{\frac{q}{s}}}\leq C\left(\|v\|_{\lebe^q}+\|\cala v\|_{W^{-l}L^{r_*}\log^{r_*}L}\right)^s\quad\textup{for }v\in C_c^\infty(\Omega,\mathbb V).
$$
\begin{proof}[Proof of \eqref{eq:firstgoal}]
We first note that by the restrictions on the parameters, the uniformly bounded support of $v_j$, and the compact Sobolev embedding, we have that
$$
v_j\rightharpoonup v\textup{ in }\lebe^q\implies v_j\rightarrow v\textup{ in }{W}{^{-1,r_*}}.
$$
By Proposition~\ref{prop:HHHH}, we have the decompositions $v_j=\B u_j + \cala^*w_j$ which obey
\begin{align*}
    \begin{cases}\B u_j \rightharpoonup \B u & \textup{ in }\lebe^q\\
    \B u_j \rightarrow \B u& \textup{ in }{W}{^{-1,r_*}}\end{cases} \qquad \tp{and} \qquad 
    \cala^* w_j \rightarrow \cala^* w\textup{ in }\lebe^{r_*}.
\end{align*}
We next fix $\varphi\in C_c^\infty(\Omega)$ and claim that 
\begin{align}\label{eq:cauchy}
\lim_{i,j\rightarrow\infty}\int_{\Omega}\varphi[F(v_j)-F(v_i)]\dif x= 0.
\end{align}
To this end, we will split more carefully than in \eqref{eq:nonlin_diff}:
$$
F(v_j)-F(\B u_j)=F(\B u_j+\cala^*w_j)-F(\B u_j)=\int_0^1 \langle \cala^*w_j,F'(\B u_j+t\cala^*w_j)\rangle\dif t.
$$
First, we use \cite[Equation~(7.2), Remark~7.4]{Guerra2019} to see that
$$
\lim_{i,j\rightarrow\infty}\int_{\Omega}\varphi[F(\B u_j)-F(\B u_i)]\dif x= 0.
$$
We further write, using Fubini's Theorem,
\begin{align*}
    \int_{\Omega}\varphi [F(v_j)-F(\B u_j)]\dif x=\int_0^1\int_{\Omega}\varphi\langle \cala^*w_j,F'(\B u_j+t\cala^*w_j)\rangle\dif x\dif t.
\end{align*}
We first note the simple bound independent of $t\in(0,1)$
$$
|\langle \cala^*w_j,F'(\B u_j+t\cala^*w_j)\rangle|\leq C|\cala^*w_j|(|\B u_j|^{s-1}+|\cala^* w_j|^{s-1}),
$$
where the first term is bounded in $\lebe^{r_*}=(\lebe^{q/(s-1)})^*$, and the latter is bounded in $\lebe^{q/(s-1)}$. Therefore, upon proving that the inner integral converges in $j$, we will be able to apply the dominated convergence theorem to interchange the limit in $j$ with the integral $\dif t$. 
We now recall  that, if $F$ is $\cala$-quasiaffine, then so are the components of $F'$, which are in addition $(s-1)$-homogeneous. Noting that $(F'(\B u_j+t\cala^*w_j))_j$ is  equi-integrable, since it is bounded in $\lebe^{q/(s-1)}$ and $q>s-1$, we can infer from Theorem~\ref{thm:CDM} that
$$
F'(\B u_j+t\cala^*w_j)\rightharpoonup F'(\B u+t\cala^*w)\textup{ in }\lebe^1(\Omega).
$$
This is then automatically improved to weak convergence in $\lebe^{q/(s-1)}$ by the available bound. Recalling that $\cala^* w_j\rightarrow \cala^*w$ in $\lebe^{r_*}$, we can infer that 
$$
\langle \cala^*w_j,F'(\B u_j+t\cala^*w_j)\rangle\wstar \langle \cala^*w,F'(\B u+t\cala^*w)\rangle\textup{ in }\mathscr{D}'(\Omega).
$$
We are thus in a position to indeed apply the dominated convergence theorem to conclude that
\begin{equation}
\lim_{j\rightarrow\infty} \int_{\Omega}\varphi [F(v_j)-F(\B u_j)]\dif x=\int_{\Omega}\varphi \int_0^1\langle \cala^*w,F'(\B u+t\cala^*w)\rangle\dif t\dif x.
\label{eq:stepb}
\end{equation}
This completes the proof of the well defined-ness of the distributional quantity $F(v)$, and (\ref{eq:firstgoal}) follows.
\end{proof}

It remains to prove the Hardy space bound. 

\begin{proof}[Proof of \eqref{eq:secondgoal}]
This proof will be done in two steps, first assuming exact constraint $\cala v_j=0$ and then incorporating the perturbation.

\textbf{Step 1}. We first prove that, if $v\in C_c^\infty(\R^n,\mathbb V)$ satisfies $\cala v=0$, then
\begin{align}\label{eq:Hp_bound}
    \|F(v)\|_{\mathscr H^{q/s}}\leq C\|v\|_{\lebe^q}^{{s}}.
\end{align}
Though our proof of this inequality uses the techniques in \cite{Coifman1993,Guerra2019,Lindberg2017}, the result is probably new. In particular, we adapt the approach used to prove \cite[Proposition~6.3(b)]{Guerra2019}.

We note that it is enough to prove \eqref{eq:Hp_bound} in the case when $v=\D U$ for a test function $U$ and $F=M$ is an $s\times s$ minor. We explain why this is the case:

First, recall from \cite[Proposition~3.16]{Guerra2019} that if $\cala v=0$ in full space, then $v=\B u$ for some $u\in\dot{W}{^{k,q}}(\R^n,\mathbb U)$ with the estimate
$$
\|\D^k u\|_{\lebe^q}\leq C\|v\|_{\lebe^q}.
$$
We can then write in jet notation
$$
\B u=T(\D^k u),
$$
where $T$ is a linear map between finite dimensional spaces (see also \cite[Equation~(3.7)]{Guerra2019}). By \cite[Lemma~5.6]{Guerra2019}, we have that $F$ is $\cala$-quasiaffine if and only if $F\circ T$ is $k$-quasiaffine. These were characterized in \cite[Theorem~4.1]{Ball1981} as linear combinations of Jacobian subdeterminants of $\D U$, where $U\equiv \D^{k-1} u$. Therefore, using the $s$-homogeneity assumption, we have that
$$
F\circ T(\D U)= \sum_{\deg M=s}c_M M(\D U),
$$
where the sum runs over all $s\times s$ minors of the matrix $\D U$. Assuming that \eqref{eq:Hp_bound} holds for $F=M$ and $v=\D U$, we have that
\begin{align*}
    \|F(v)\|_{\mathscr{H}^{\frac{q}{s}}}=\|F\circ T(\D U)\|_{\mathscr{H}^{\frac{q}{s}}}\leq C\sum_{\deg M=s}\|M(\D U)\|_{\mathscr{H}^{\frac{q}{s}}}\leq C\|\D U\|_{\lebe^q}^s = C\|\D^ku\|_{\lebe^q}^s\leq C\|v\|_{\lebe^q}^s.
\end{align*}
Therefore it remains to prove that
\begin{align}\label{eq:minor_inequ}
    \|M(\D U)\|_{\mathscr{H}^{\frac{q}{s}}}\leq C\|\D U\|_{\lebe^q}^s\quad\textup{for }U\in C^\infty_c(\R^n,\mathfrak U),
\end{align}
for an $s\times s$ minor $M$, where we write $\mathfrak U$ for the space of symmetric, $(k-1)$-linear, $\mathbb U$-valued maps on $\R^n$. We introduce coordinates $x=(x',x'')$, $U=(U',U'')$, chosen such that $M(\D U)=\det \D_{x'}U'$, where $\D_{x'}$ is viewed as a differential operator on $\R^n$.

We let $0\not\equiv\psi\in C_c^\infty(B_1(0))$ be non-negative. We recall from the proof of \cite[Proposition~6.3(b)]{Guerra2019} that there exists a vector field $\Sigma$ such that
\begin{align}\label{eq:div-curl_structure}
	M(\D U)=\langle \D_{x'}U'_1, \Sigma \rangle_{\R^s}
	,\quad \D_{x'}^* \Sigma=0,\quad
    |\Sigma|\leq C\prod_{j=2}^s|\D U_j'|,
    \end{align}
where we allow the constants to depend on $\psi$ also. These are just cofactor identities, written in our coordinates. We next estimate
    \begin{align*}    
     |\psi_t*M(\D U)|(x)&=\left|\frac{1}{t^{n}}\int_{\R^n}\langle\D_{x^\prime}U^\prime_1(y),\Sigma(y)\rangle_{\R^s}\psi\left(\frac{x-y}{t}\right)\dif y\right|\\
     &=\left|\fint_{B_t(x)}\biggr\langle\D_{x^\prime}\big[U^\prime_1(y)-(U^\prime_1)_{x,t}\big],\Sigma(y)\psi\left(\frac{x-y}{t}\right)\biggr\rangle_{\R^s}\dif y\right|\\
     &=\left|\frac{1}{t}\fint_{B_t(x)}(U^\prime_1(y)-(U^\prime_1)_{x,t})\biggr\langle (\D_{x^\prime}\psi)\left(\frac{x-y}{t}\right),\Sigma(y)\biggr\rangle_{\R^s}\dif y\right|\\
     &\leq \frac{C}{t}\fint_{B_t(x)}|U_1'-(U_1')_{x,t}||\Sigma|\dif y,
\end{align*}
where we write
$$
\fint_{\Omega}\equiv\frac{1}{\mathscr \lebe^n(\Omega)}\int_{\Omega}\;,\quad (\,\bigcdot\,)_{x,t}\equiv\fint_{B_t(x)}\;,\quad \psi_t(x)\equiv\frac{1}{t^n}\psi\left(\frac{x}{t}\right).
$$
We choose $ns/(n+1)<\tilde q<q$, $p\equiv n\tilde q/(n-\tilde q)$, and $p'=p/(p-1)$ to estimate by use of H\"older and Poincar\'e--Sobolev inequalities
\begin{align*}
    |\psi_t*M(\D U)|(x)&\leq \frac{C}{t}\left(\fint_{B_t(x)}|U_1'-(U_1')_{x,t}|^{p}\dif y\right)^{1/p}\left(\fint_{B_t(x)}|\Sigma|^{p'}\dif y\right)^{1/p'}\\
    &\leq C\left(\fint_{B_t(x)}|\D U_1'|^{\tilde q}\dif y\right)^{1/\tilde q}\left(\fint_{B_t(x)}|\Sigma|^{p'}\dif y\right)^{1/p'}.
\end{align*}
Writing $\mathcal M$ for the Hardy--Littlewood maximal function, we estimate further
\begin{align*}
    \sup_{t>0}|\psi_t*M(\D U)|(x)\leq C\mathcal M(|\D U_1'|^{\tilde q})(x)^{1/\tilde q}\mathcal M(|\Sigma|^{p'})(x)^{1/p'}.
\end{align*}
Finally, we apply H\"older's inequality and the boundedness of the maximal function on Lebesgue spaces to obtain that
\begin{align*}
    \|M(\D U)\|_{\mathscr H^{\frac{q}{s}}}^{\frac{q}{s}}&\leq  C\int_{\R^n} \mathcal M(|\D U_1'|^{\tilde q})^{\frac{q}{s\tilde q}}\mathcal M(|\Sigma|^{p'})^{\frac{q}{sp'}}\dif x\\
    &\leq C\left(\int_{\R^n} \mathcal M(|\D U_1'|^{\tilde q})^{\frac{q}{\tilde q}}\dif x\right)^{1/s}\left(\int_{\R^n} \mathcal M(|\Sigma|^{p'})^{\frac{q}{(s-1)p'}}\dif x\right)^{(s-1)/s}\\
    &\leq C\left(\int_{\R^n} |\D U_1'|^q\dif x\right)^{1/s}\left(\int_{\R^n} |\Sigma|^{\frac{q}{(s-1)}}\dif x\right)^{(s-1)/s}\\
    &\leq C\left(\int_{\R^n} |\D U_1'|^q\dif x\right)^{1/s}\left(\int_{\R^n} \prod_{j=2}^s|\D U_j'|^{\frac{q}{(s-1)}}\dif x\right)^{(s-1)/s}\\
    &\leq C \prod_{j=1}^s\left(\int_{\R^n} |\D U_j'|^{q}\dif x\right)^{1/s}\leq C \|\D U\|_{\lebe^q}^q,
\end{align*}
which completes the proof of \eqref{eq:minor_inequ}.

\textbf{Step 2}.
We will again use the decomposition $v_j=\B u_j+\cala^*w_j$, of which we record:
\begin{align*}
    \|\B u_j\|_{\lebe^q}\leq C\|v_j\|_{\lebe^q},\quad\|\cala^*w_j\|_{\lebe^{r_*}\log^{r_*}L}\leq C\|\cala v_j\|_{{W}^{-l}L^{r_*}\log^{r_*}L},
\end{align*}
by an adaptation of Proposition \ref{prop:HHHH}. 
Since $v_j$ have support in $\Omega$, it suffices to show that $(\mathcal M_{\locc}F(v_j))_j$ is uniformly bounded in $\lebe^{q/s}$. We therefore write $\tilde\Omega\equiv\{x\in\R^n\colon \tp{dist}(x,\Omega)<1\}$ and write
\begin{align*}
\int_{\R^n}|\mathcal{M}_{\locc}F(v_j)|^{q/s}\dif x&= \int_{\tilde \Omega}|\mathcal{M}_{\locc}F(v_j)|^{q/s}\dif x\\
&\leq C \int_{\tilde \Omega}|\mathcal{M}_{\locc}F(\B u_j)|^{q/s}\dif x+C\int_{\tilde \Omega}|\mathcal{M}_{\locc}\left(F(v_j)-F(\B u_j)\right)|^{q/s}\dif x\\
&\leq C \|\B u_j\|_{\lebe^q}^q+C\left(\int_{\tilde \Omega}|\mathcal{M}_{\locc}\left(F(v_j)-F(\B u_j)\right)|\dif x\right)^{q/s},
\end{align*}
where in the last inequality we used \eqref{eq:Hp_bound} and H\"older's inequality. The first term is thus bounded by the $\lebe^q$-norm of $v_j$, 
whereas for the last term we apply Proposition \ref{prop:stein} to estimate $\mathcal{M}_{\locc}$ in $L^1$ by $\|F(v_j)-F(\B u_j)\|_{L\log L}$ and will apply a simple argument to estimate this quantity. To prove the remaining bound, we go back to \eqref{eq:nonlin_diff} and note that we need to control terms of the form
$
(\cala^* w_j)^\alpha (\B u_j)^{\beta} $
with $0<|\alpha|=s-|\beta|$.
We write $|\alpha|=i\in\{1,\ldots,s\}$ and apply the H\"older--Zygmund inequality, \eqref{eq:holder}, to get
\begin{align*}
    \||\cala^* w_j|^i |\B u_j|^{s-i}\|_{L\log L}\leq \|\cala^*w_j\|^i_{L^{r_*}\log^{\frac{r_*}{i}}L}\|\B u_j\|^{s-i}_{\lebe^q}\leq C\|\cala^*w_j\|^i_{L^{r_*}\log^{r_*}L}\|\B u_j\|^{s-i}_{\lebe^q}.
\end{align*}
The conclusion then follows by collecting the considerations above. In fact, we obtain the estimate
$$
\|F(v)\|_{\mathscr H^{\frac{q}{s}}}\leq C\left(\|v\|_{\lebe^q}+\|\cala v\|_{W^{-l}L^{r_*}\log^{r_*}L}\right)^s\quad\textup{for }v\in C_c^\infty(\Omega,\mathbb V),
$$
from the bounds given by the Helmholtz decomposition of Proposition~\ref{prop:HHHH}.
\end{proof}

\begin{rmk}
 It follows from the proof of \eqref{eq:secondgoal} that, at the endpoint $q=\frac{n s}{s+1}$, there is a weak-type estimate
$$\Vert F(v)\Vert_{\mathscr H^{\frac{n}{n+1},\infty}} \leq C(\Omega) \left(\|v\|_{\lebe^\frac{n s}{s+1}}+\|\cala v\|_{W^{-l}L^{r_*}\log^{r_*}L}\right)^s\quad\textup{for }v\in C_c^\infty(\Omega,\mathbb V).$$
In  general, one cannot improve this estimate to one of strong-type: indeed, it suffices to consider the case $\mathcal A=\tp{curl}$ and $F=\det$. The Jacobian has only one cancellation (i.e.\ no higher order moments vanish); however, if $f\in \mathscr H^{\frac{n}{n+1}}(\R^n)$ then $\int_{\R^n} f\dif x = \int_{\R^n} x f(x) \dif x = 0$.
\end{rmk}

\begin{rmk}\label{rmk:CC}
	While proving above the well defined-ness of the distributional quantities in Theorem~\ref{thm:dist}, we also reproved the critical exponent case $q=s=r_*=r$ of \eqref{eq:M_intro}, originally covered in \cite{Guerra2019}. In contrast with the proof there and even with the techniques we were aware of to prove the statement for the supercritical case $q=s=r_*<r$, here we have given a proof that does not rely on semi-continuity methods. We now sketch an alternative self contained method of proof here. First we recall the notation, with
	\begin{align*} 
	v_j\weakto v \text{ in }\lebe^s(\Om,\mathbb V)\quad\text{and}\quad
	\mathcal{A}v_j\to \mathcal{A}v \text{ in } \sobo^{-l,s}(\Om,\mathbb W),
	\end{align*}
	we aim to show that $F(v_j)$ converges to $F(v)$ in the sense of distributions, where $F$ is an $s$-homogeneous $\cala$-quasiaffine polynomial. So we fix $\varphi\in C_c^\infty(\Omega)$ and outline the following steps:
	\begin{enumerate}
		\item We can assume that $v_j\in C_c^\infty(\Omega,\mathbb V)$ by a standard cut-off argument (see, e.g. the proof of \cite[Proposition~2.15]{Fonseca1999}).
		\item We use the Helmholtz decomposition $v_j=\B u_j+\cala^* w_j$ and (\ref{eq:stepb})  to show that
		$$
		F(v_j)-F(\B u_j)\rightharpoonup F(v)-F(\B u)\text{ in }L^1_{\locc}(\Omega).
		$$
		\item It would remain to show that $F(\B u_j)$ converges to $F(\B u)$ in the sense of distributions. By the reduction in Step 1 of the proof of Theorem~\ref{thm:dist}, we can assume that $F=M$ is a minor and $\B u=\D U$, $\B u_j=\D U_j$. We can then use the div-curl structure given in \eqref{eq:div-curl_structure} and an adaptation of the simple proof of the div-curl lemma to conclude. Indeed, with the notation of \eqref{eq:div-curl_structure}, we have, for any fixed $\varphi\in C_c^\infty(\R^n)$,
		\begin{align*}
		\int_\Om \langle \D_{x'}(U_{j})_{1}',\Sigma_j\rangle_{\R^s}\varphi\dif x=-\int_\Om (U_{j})_{1}'\langle \Sigma_j,\D_{x'}\varphi\rangle_{\R^s}\dif x\to &\,-\int_\Om U_{1}'\langle \Sigma,\D_{x'}\varphi\rangle_{\R^s}\dif x\\
		=&\,\int_\Om \langle \D_{x'}U_{1}',\Sigma\rangle_{\R^s}\varphi\dif x,
		\end{align*}
		where we have used that $\D^*_{x}\Sigma_j=\D_{x'}^*\Sigma=0$, the weak convergence $\Sigma_j\weakto \Sigma$ in $L^{\frac{s}{s-1}}$, and the strong convergence (locally) of $(U_{j})_{1}'\to U_1'$ in $L^s$ due to the compact Sobolev embedding.
	\end{enumerate}
\end{rmk}

\section{Quantitative dual H\"older estimates}\label{sec:holder}
In this last section, we prove Theorem \ref{thm:D} and its dual H\"older estimates. As in the proof of Theorem \ref{thm:dist}, the general result follows from the result for distributional Jacobians. As the result for the Jacobian determinant is already new and may be of independent interest, we first state and prove the result in this case in Theorem \ref{thm:interpolated} below. The core of the proof rests on the elegant observation of \cite{Brezis2011b} that for sufficiently smooth functions $u:\R^n\to\R^n$ and $\varphi:\R^n\to\R$, one may write
$$\int_{\R^n} \det(\D u)\varphi\dif x=\int_{\R^{n+1}_+}\det_{n+1}\big(\D_{t,x}\Phi,\D_{t,x} U\big)\dif x\dif t,$$
where $U$, $\Phi$ are extensions of $u$ and $\varphi$ to the upper half space. Throughout this section, whenever we refer to the harmonic extension of a function defined on $\R^n$, we mean the extension to the half-space $\R^{n+1}_+$ through convolution with the Poisson kernel.

We begin by recalling a useful fact concerning properties of the harmonic extension.

\begin{prop}\label{prop:fractional}
Let $f\in C^\infty_c(\R^n)$ and denote by $F(t,x)$ the harmonic extension of $f$ to $\R^{n+1}_+$. Then, for any $\beta\in[0,1)$, $p\in(1,\infty)$, we have that
\begin{equation}\label{eq:fracSobest}
    \Big(\int_{\R^{n+1}_+}\big|t^{1-\frac{1}{p}-\beta}\D_{t,x}F(t,x)\big|^p\,\dif x\dif t\Big)^{\frac{1}{p}}\leq C [f]_{W^{\beta,p}(\R^n)},
\end{equation}
where in the case $\beta=0$, the semi-norm on the right is the $L^p$ norm.
Let $\alpha\in(0,1)$, $\varphi\in C^{0,\alpha}(\R^n)$ and denote by $\Phi(t,x)$ the harmonic extension of $\varphi$ to $\R^{n+1}_+$. Then we have
\begin{equation}\label{eq:Holderest}
    \sup_{t,x}t^{1-\alpha}\big|\D_{t,x} \Phi(t,x)\big|\leq C [\varphi]_{C^{0,\alpha}}.
\end{equation}
\end{prop}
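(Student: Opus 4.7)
Both estimates rely on the explicit form of the Poisson kernel $P_t(y)=c_n t(t^2+|y|^2)^{-(n+1)/2}$, its normalisation $\int_{\R^n}P_t(y)\dif y=1$ for every $t>0$ (whence $\int_{\R^n}\D_{t,x}P_t(y)\dif y=0$), and the pointwise decay $|\D_{t,x}P_t(y)|\leq C(t^2+|y|^2)^{-(n+1)/2}$, which follows from a direct differentiation of the kernel.

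For the H\"older bound \eqref{eq:Holderest}, I would exploit the cancellation property $\int\D_{t,x}P_t=0$ to subtract the constant $\varphi(x)$ and write
\begin{equation*}
\D_{t,x}\Phi(t,x)=\int_{\R^n}\D_{t,x}P_t(x-y)\bigl[\varphi(y)-\varphi(x)\bigr]\dif y.
\end{equation*}
Combining the kernel decay with $|\varphi(y)-\varphi(x)|\leq[\varphi]_{C^{0,\alpha}}|x-y|^\alpha$ and performing the change of variables $y=x+tz$ reduces the bound to the $t$-independent integral $\int_{\R^n}(1+|z|^2)^{-(n+1)/2}|z|^\alpha\dif z$, which converges precisely because $\alpha<1$, while the scaling produces the required factor $t^{\alpha-1}$.

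For the Sobolev bound \eqref{eq:fracSobest}, I would distinguish two regimes. The case $\beta=0$ is the classical Littlewood--Paley $g$-function estimate: on the Fourier side $\D_{t,x}F$ corresponds to a multiplier built from $e^{-2\pi t|\xi|}$, so Plancherel handles $p=2$ directly, and a vector-valued Calder\'on--Zygmund argument applied to the $L^2(\dif t/t)$-valued kernel $t\D_{t,x}P_t$ extends the estimate to every $p\in(1,\infty)$. For $\beta\in(0,1)$ I would use the same cancellation trick, writing
\begin{equation*}
\D_{t,x}F(t,x)=\int_{\R^n}\D_{t,x}P_t(x-y)\bigl[f(y)-f(x)\bigr]\dif y,
\end{equation*}
applying Minkowski's integral inequality in the $x$-variable to reduce matters to an integral involving the $L^p$-moduli of continuity $\|f(\,\cdot\,+z)-f\|_{L^p}$, and finally integrating over $t$ against the weight $t^{p-1-p\beta}$. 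After Fubini and a rescaling, the kernel contribution integrates to a finite constant and one is left precisely with the Gagliardo seminorm $[f]_{W^{\beta,p}}^p=\iint|f(x)-f(y)|^p|x-y|^{-n-p\beta}\dif x\dif y$.

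The main obstacle is the bookkeeping needed to track the interaction between the kernel decay, the weight $t^{p-1-p\beta}$, and the $L^p$ structure uniformly across the full range of admissible $(\beta,p)$. A cleaner alternative is to invoke these estimates as classical norm equivalences from the theory of extensions and traces for fractional Sobolev (or Triebel--Lizorkin) spaces, available in the monographs of Stein or Triebel.
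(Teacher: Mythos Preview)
The paper does not prove this proposition but simply cites \cite[Proposition~10.2, Theorems~10.6 and~10.8]{Lenzmann2018} for the three estimates. Your proposal goes further by sketching direct arguments, and these are essentially correct and standard; your treatment of \eqref{eq:Holderest} and of the $\beta=0$ case via the $g$-function is clean.

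For $\beta\in(0,1)$ one caveat is worth flagging. If, after the Minkowski step in $x$, you raise the resulting bound to the $p$-th power and only then integrate in $t$ (as your phrasing ``Minkowski in $x$ \dots\ and finally integrating over $t$'' suggests), you are forced through Jensen and end up with the factor $\int_{\R^n}(1+|z|^2)^{-(n+1)/2}|z|^{p\beta}\dif z$, which diverges whenever $p\beta\geq 1$. The remedy is to apply Minkowski's integral inequality in the full weighted norm $L^p\bigl(\R^{n+1}_+,\,t^{p-1-p\beta}\dif x\dif t\bigr)$, pulling the $z$-integral outside both the $x$- and the $t$-integration simultaneously. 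The rescaling $s=t|z|$ then produces the convergent factor $\int_{\R^n}(1+|z|^2)^{-(n+1)/2}|z|^{\beta}\dif z$ (finite precisely because $\beta<1$) multiplied by $\bigl(\int_0^\infty s^{-1-p\beta}\omega_p(s)^p\dif s\bigr)^{1/p}$, which is the modulus-of-continuity characterisation of $[f]_{W^{\beta,p}}$. With this adjustment your argument goes through for the full range of $(\beta,p)$.

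Your closing remark --- that one may instead invoke these as classical trace/extension norm equivalences from Stein or Triebel --- is in fact exactly the route the paper takes.
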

Inequality \eqref{eq:fracSobest} in the case $\beta\neq 0$ may be found in \cite[Proposition 10.2, (10.7), (10.9)]{Lenzmann2018}. In the case $\beta=0$, this is a standard estimate, but may be seen also, for example, in \cite[Theorem 10.8]{Lenzmann2018}, recalling that the Besov space $B^0_{p,p}=L^p$ for $p\in(1,\infty)$. Inequality \eqref{eq:Holderest} is the estimate of \cite[Theorem 10.6]{Lenzmann2018}.

With the help of Proposition \ref{prop:fractional}, we can prove the following result:

\begin{thm}\label{thm:interpolated}
Let $\beta\in[\frac{n-1}{n},1)$, $\alpha\in(0,1]$ such that $\frac{\alpha}{n}=1-\beta$. Then there exists $C>0$ such that, for any $u\in W^{\beta,n}(\R^n;\R^n)$ and $\varphi\in C^{0,\alpha}(\R^n)$, the following estimate holds:
\begin{equation}
    \Big|\int_{\R^n}\det(\D u)\varphi\dif x\Big|\leq C[\varphi]_{C^{0,\alpha}}\prod_{i=1}^n[u_i]_{W^{\beta,n}(\R^n)}.
\end{equation}
Moreover, given another function $v\in W^{\beta,n}(\R^n;\R^n)$, we may estimate the difference of the Jacobians of $u$ and $v$ by 
\begin{equation}
    \Big|\int_{\R^n}\big(\det(\D u)-\det(\D v)\big)\varphi\dif x\Big|\leq C[\varphi]_{C^{0,\alpha}}\sum_{j=1}^n\Big([u_j-v_j]_{W^{\beta,n}(\R^n)}\prod_{i=1}^{j-1}[v_i]_{W^{\beta,n}(\R^n)}\prod_{i=j+1}^n[u_i]_{W^{\beta,n}(\R^n)}\Big).
\end{equation}
\end{thm}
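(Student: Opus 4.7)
The plan is to follow the elegant lifting strategy pioneered by Brezis--Nguyen \cite{Brezis2011b}, but performing a weighted $L^\infty$--$L^n$ Hölder estimate in the upper half-space whose exponent matches exactly the critical relation $\alpha=n(1-\beta)$. First, by a standard approximation argument (Proposition~\ref{prop:fractional} is continuous in the relevant norms), it suffices to work with $u,v\in C^\infty_c(\R^n,\R^n)$ and $\varphi\in C^\infty_c(\R^n)$.

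The core identity is the following. Let $U_i(t,x)$ be the harmonic extension of $u_i$ to $\R^{n+1}_+$, and $\Phi(t,x)$ the harmonic extension of $\varphi$. Applying Stokes' theorem to the closed $(n+1)$-form $d\bigl(\Phi\,dU_1\wedge\cdots\wedge dU_n\bigr)$ on $\R^{n+1}_+$ (decay at infinity is ensured by working with test data), one obtains
\begin{equation*}
\int_{\R^n}\det(\D u)\,\varphi\,\dif x \;=\; \pm\int_{\R^{n+1}_+}\det\bigl(\D_{t,x}\Phi,\D_{t,x}U_1,\dots,\D_{t,x}U_n\bigr)\,\dif x\,\dif t,
\end{equation*}
where the integrand on the right is the Jacobian of the map $(t,x)\mapsto(\Phi,U_1,\dots,U_n)$ from $\R^{n+1}_+$ to $\R^{n+1}$. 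This will be the key reduction: it trades one derivative in the boundary integral for an extra dimension of integration, which is exactly what allows H\"older-type test functions to be accommodated.

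Next, I estimate the integrand pointwise by the product of the Euclidean norms of its $n+1$ columns and insert powers of $t$ as follows:
\begin{equation*}
|\D_{t,x}\Phi|\prod_{i=1}^n|\D_{t,x}U_i|
\;=\;\bigl(t^{1-\alpha}|\D_{t,x}\Phi|\bigr)\prod_{i=1}^n\bigl(t^{1-\tfrac{1}{n}-\beta}|\D_{t,x}U_i|\bigr)\cdot t^{(\alpha-1)+n(\tfrac{1}{n}+\beta-1)}.
\end{equation*}
The total exponent on $t$ is $\alpha+n\beta-n$, which vanishes precisely under the hypothesis $\tfrac{\alpha}{n}=1-\beta$. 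Consequently, H\"older's inequality in $\R^{n+1}_+$ with exponents $(\infty,n,\dots,n)$ combined with \eqref{eq:Holderest} and \eqref{eq:fracSobest} from Proposition~\ref{prop:fractional} yields
\begin{equation*}
\Bigl|\int_{\R^n}\det(\D u)\varphi\,\dif x\Bigr|
\leq \|t^{1-\alpha}\D_{t,x}\Phi\|_{L^\infty}\prod_{i=1}^n\|t^{1-\tfrac{1}{n}-\beta}\D_{t,x}U_i\|_{L^n(\R^{n+1}_+)}
\leq C[\varphi]_{C^{0,\alpha}}\prod_{i=1}^n[u_i]_{W^{\beta,n}}.
\end{equation*}

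For the difference estimate, I use the multilinearity of the determinant in its rows to telescope
\begin{equation*}
\det(\D u)-\det(\D v)=\sum_{j=1}^n\det\bigl(\nabla v_1,\dots,\nabla v_{j-1},\nabla(u_j-v_j),\nabla u_{j+1},\dots,\nabla u_n\bigr).
\end{equation*}
Each summand is an $n$-linear alternating form in $n$ scalar gradients, and the lifting identity above applies verbatim when each $U_i$ is replaced by the harmonic extension of the corresponding scalar ($v_i$, $u_j-v_j$, or $u_i$), since the identity only uses that each $U_i$ is a harmonic extension, not that they share an origin as components of a single map. Repeating the weighted H\"older argument on each summand and summing yields the claimed estimate.

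The main obstacle is verifying the Brezis--Nguyen identity rigorously: one needs to control the boundary behaviour at $t=0$ (where $U_i\to u_i$, $\Phi\to\varphi$) and the decay at $t\to\infty$ and $|x|\to\infty$ in order to discard boundary terms other than $\{t=0\}$. For Schwartz or compactly supported data this follows from standard Poisson kernel estimates, and density of such data in $W^{\beta,n}\cap C^{0,\alpha}$ (in the relevant semi-norms, up to additive constants which do not affect the integrals since $\det(\D u)$ has cancellation) then extends the inequality.
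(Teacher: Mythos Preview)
Your argument is correct and coincides with the paper's proof: both use the Brezis--Nguyen lifting identity via harmonic extension, split the integrand as $(t^{1-\alpha}|\D_{t,x}\Phi|)\prod_i(t^{1-\frac{1}{n}-\beta}|\D_{t,x}U_i|)$, observe that the residual power of $t$ vanishes under $\alpha=n(1-\beta)$, and conclude via H\"older together with Proposition~\ref{prop:fractional}; the difference estimate is handled identically by multilinearity of the determinant. One small caveat: the estimate \eqref{eq:Holderest} in Proposition~\ref{prop:fractional} is stated only for $\alpha\in(0,1)$, so the endpoint $\alpha=1$, $\beta=\tfrac{n-1}{n}$ is not covered by your chain of inequalities as written---the paper simply defers this case to \cite[Theorem~3]{Brezis2011b}.
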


\begin{proof}
The case $\al=1$, $\be=\frac{n}{n-1}$ is proved in \cite[Theorem 3]{Brezis2011b}. We therefore take $\al\in(0,1)$, $\be\in(\frac{n}{n-1},1)$ and first assume that $v\equiv 0$. Let $u$, $\varphi$ be as in the theorem and denote by $U$ and $\Phi$ their harmonic extensions by convolution with the Poisson kernel. Then we have the key identity (see \cite{Brezis2011b,Lenzmann2018})
\begin{align*}
    \int_{\R^n}\det(\D u)\varphi\dif x=&\,\int_{\R^{n+1}_+}\det_{n+1}\big(\D_{t,x}\Phi,\D_{t,x} U\big)\dif x\dif t,
\end{align*}
which is a direct consequence of integration by parts, together with the decay of $U$ and $\Phi$ at infinity. Hence we can estimate
\begin{align*}
    \Big| \int_{\R^n}\det(\D u)\varphi\dif x\Big|\leq&\,\int_{\R^{n+1}_+}\Big(\prod_{i=1}^n|\D U_i|\Big)|\D\Phi|\dif x\dif t\\
    \leq&\, \|t^{1-\alpha}|\D_{t,x}\Phi|\|_{L^\infty_{t,x}}\int_{\R^{n+1}_+}t^{\alpha-1}\Big(\prod_{i=1}^n|\D U_i|\Big)\dif x\dif t\\
    \leq&\, C[\varphi]_{C^{0,\alpha}}\int_{\R^{n+1}_+}\Big(\prod_{i=1}^nt^{\frac{\alpha-1}{n}}|\D U_i|\Big)\dif x\dif t\\
    \leq&\, C[\varphi]_{C^{0,\alpha}}\prod_{i=1}^n\Big(\int_{\R^{n+1}_+}\big|t^{\frac{\alpha-1}{n}}|\D U_i|\big|^n\dif x\dif t\Big)^{\frac{1}{n}}\\
    \leq &\,C[\varphi]_{C^{0,\alpha}}\prod_{i=1}^n[u_i]_{W^{\beta,n}(\R^n)},
\end{align*}
where we have used in the last line that $\frac{\alpha-1}{n}=1-\frac{1}{n}-\beta$ and applied \eqref{eq:fracSobest}.

To address the case with a difference of functions, we recall the standard fact (see, for example, \cite{Brezis2011b}), that we may write
\begin{equation}\label{eq:BNdetdiff}
\det(\D u)-\det(\D v)=\sum_{j=1}^n \textrm{W}^{(j)},
\end{equation}
where
$$\textrm{W}^{(j)}=\det(\D v_1,\ldots,\D v_{j-1},\D(u_j-v_j),\D u_{j+1},\ldots,\D u_n).$$
Writing
$$w^{(j)}=( v_1,\ldots, v_{j-1},u_j-v_j, u_{j+1},\ldots, u_n),$$
so that $\textrm{W}^{(j)}=\det(\D w^{(j)})$, we follow the argument above, replacing $u$ with $w^{(j)}$, summing over $j$ to conclude.
\end{proof}

\begin{cor}\label{cor:GNinterpolation}
Let $\alpha\in(0,1]$ and choose $p\in(n-1,\infty)$, $q\in(1,\infty)$ such that $\frac{\alpha}{q}+\frac{n-\alpha}{p}=1$. Then
\begin{equation}
    \Big|\int_{\R^n}\det(\D u)\varphi\dif x\Big|\leq C[\varphi]_{C^{0,\alpha}} \|u\|^\alpha_{L^q}\|\D u\|_{L^p}^{n-\alpha}.
\end{equation}
If instead we determine $p$, $q$ by $\frac{1}{q}+\frac{n-1}{p}=1$, then we have
\begin{equation}
    \Big|\int_{\R^n}\det(\D u)\varphi\dif x\Big|\leq C[\varphi]_{C^{0,\alpha}} [u]_{W^{1-\alpha,q}}\|\D u\|_{L^p}^{n-1}.
\end{equation}
\end{cor}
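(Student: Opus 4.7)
My plan is to deduce both inequalities from Theorem~\ref{thm:interpolated} by applying the (fractional) Gagliardo--Nirenberg interpolation inequality componentwise to $u$. Set $\beta = 1 - \alpha/n$, so that $\frac{\alpha}{n} = 1-\beta$ and $\beta \in [\frac{n-1}{n},1)$, which is exactly the regime in which Theorem~\ref{thm:interpolated} produces
$$
\Big|\int_{\R^n}\det(\D u)\varphi\dif x\Big|\leq C[\varphi]_{C^{0,\alpha}}\prod_{i=1}^n [u_i]_{W^{\beta,n}(\R^n)}.
$$
Thus the task reduces to estimating each $[u_i]_{W^{\beta,n}}$ by the quantities appearing on the right-hand side of the two claimed inequalities.

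For the first estimate, I will interpolate between $L^q$ and $\dot W^{1,p}$ using the fractional Gagliardo--Nirenberg inequality with interpolation parameter $\theta = \beta$, which gives
$$
[u_i]_{W^{\beta,n}(\R^n)} \leq C \|u_i\|_{L^q(\R^n)}^{1-\theta}\|\D u_i\|_{L^p(\R^n)}^{\theta}
     = C \|u_i\|_{L^q}^{\alpha/n}\|\D u_i\|_{L^p}^{(n-\alpha)/n},
$$
provided the scaling condition $\frac{1}{n} = \frac{1-\theta}{q} + \frac{\theta}{p}$ holds; multiplying by $n$ this becomes $\frac{\alpha}{q} + \frac{n-\alpha}{p} = 1$, which is precisely our hypothesis. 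Taking the product over $i=1,\dots,n$ and using the trivial bounds $\|u_i\|_{L^q} \leq \|u\|_{L^q}$ and $\|\D u_i\|_{L^p} \leq \|\D u\|_{L^p}$ (so that the product of the $n$ fractional powers $1/n$ collapses to a single power) then yields the first inequality. For the second estimate, I interpolate instead between $W^{1-\alpha,q}$ and $\dot W^{1,p}$ with interpolation parameter $\theta = (n-1)/n$, so that $\theta\cdot 1 + (1-\theta)(1-\alpha) = 1 - \alpha/n = \beta$; the fractional Gagliardo--Nirenberg inequality then gives
$$
[u_i]_{W^{\beta,n}} \leq C [u_i]_{W^{1-\alpha,q}}^{1/n}\|\D u_i\|_{L^p}^{(n-1)/n},
$$
subject to the scaling condition $\frac{1}{n} = \frac{1-\theta}{q} + \frac{\theta}{p}$, i.e.\ $\frac{1}{q} + \frac{n-1}{p} = 1$, which is exactly the assumption. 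Since the Gagliardo seminorm satisfies $[u_i]_{W^{1-\alpha,q}} \leq [u]_{W^{1-\alpha,q}}$, taking the product and invoking Theorem~\ref{thm:interpolated} gives the desired bound.

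The main (minor) obstacle is verifying the fractional Gagliardo--Nirenberg interpolation inequality in precisely the form required; this is a standard result in the theory of fractional Sobolev spaces and can be found, for instance, in the reference \cite{Lenzmann2018} already cited in the paper (or via Brezis--Mironescu). Everything else is bookkeeping: matching exponents, confirming the scaling relations, and collapsing the product of componentwise bounds into the ambient norm of $u$.
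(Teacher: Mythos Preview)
Your proposal is correct and follows essentially the same route as the paper's proof: apply Theorem~\ref{thm:interpolated} and then use the fractional Gagliardo--Nirenberg interpolation inequality (the paper cites \cite{Brezis2001}) with the same choices of endpoint spaces and interpolation parameter in each case. The only point the paper adds that you left implicit is the passage from full norms to seminorms on the right-hand side, which is handled by subtracting constants from the components of $u$ (leaving the Jacobian unchanged).
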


\begin{proof}
We recall the following Gagliardo--Nirenberg interpolation inequality from \cite[Corollary 3.2]{Brezis2001}:
For $\th\in(0,1)$, $0\leq \beta_1<\beta_2<\infty$, $p_1,p_2\in(1,\infty)$, determining $\beta=\theta \beta_1+(1-\theta) \beta_2$ and $\frac{1}{r}=\frac{\theta}{p_1}+\frac{1-\theta}{p_2}$, we have 
$$\|u\|_{W^{\beta,r}}\leq C \|u\|_{W^{\beta_1,p_1}}^\theta \|u\|_{W^{\beta_2,p_2}}^{1-\theta}.$$
The first claimed inequality now follows from the choice $\beta$ as in Theorem \ref{thm:interpolated}, that is, $\frac{\alpha}{n}=1-\beta$. We then choose $\beta_1=0$, $\beta_2=1$ and $r=n$. Then $\theta=1-\beta=\frac{\alpha}{n}$ and we conclude that the claimed relation for $p$ and $q$ implies that 
$$ \Big|\int_{\R^n}\det(\D u)\varphi\dif x\Big|\leq C[\varphi]_{C^{0,\alpha}} \|u\|_{W^{\beta,n}}^n\leq C[\varphi]_{C^{0,\alpha}} \|u\|^\alpha_{L^q}\|\D u\|_{L^p}^{n-\alpha}.$$
For the second inequality, we again take $\beta$ as above, but now use $\theta=\frac{1}{n}$, so that $\beta_1=1-\alpha$, $\beta_2=1$, $r=n$, and derive the claimed relation between $p$ and $q$.

To replace the norms with semi-norms, we make the usual observation that subtracting constants from the components of $u$ does not change the Jacobian determinant, and hence we may eliminate the zeroth-order contributions to the norms. 
\end{proof}

Similar statements hold with differences, though note that using the method of Gagliardo--Nirenberg interpolation (as in the above proof) means spreading the norms evenly across all components. See Corollary \ref{cor:D'} for a precise statement.

\begin{rmk}
Anisotropic versions of these inequalities are also available. Let $\alpha\in(0,1)$ and $\beta_i\in(0,1)$ for each $i$ satisfy $\sum_{i=1}^n\beta_i=n-\alpha$, $p_i\in(1,\infty)$ such that $\sum_{i=1}^n\frac{1}{p_i}=1$. Then we have the bound
$$\Big|\int_{\R^n}\det(\D u)\varphi\dif x\Big|\leq C[\varphi]_{C^{0,\alpha}}\prod_{i=1}^n[u_i]_{W^{\beta_i,p_i}}.$$
The proof is as in Theorem \ref{thm:interpolated}, using \eqref{eq:fracSobest}. 
\end{rmk}

Finally, we prove Theorem \ref{thm:D} by reducing to the situation in Theorem \ref{thm:interpolated} and adapting the proof.

\begin{proof}[Proof of Theorem \ref{thm:D}]
As in the proof of Theorem \ref{thm:dist}, we may make the usual reduction to the case of a Jacobian subdeterminant by using the Helmholtz decomposition to write $v=\B \tilde u$ and then $F(\B \tilde u)=F\circ T(\D u)=\sum_{\deg M=s}c_M M(\D u)$ with $u=\D^{k-1}\tilde u$.

We therefore begin by considering first subdeterminants of $\D u$ for $u\in C_c^\infty(\R^n)$ and then extend the estimate by density. To handle the case in which $\Omega$ is a bounded Lipschitz domain, we recall that such domains are extension domains for general Besov spaces (including fractional Sobolev spaces) so that we may extend $u\in W^{\beta,s}(\Omega)$ to a compactly supported function $\bar u\in W^{\be,s}(\R^n)$ with $\|\bar u\|_{W^{\beta,s}(\R^n)}\leq C(\Omega)\|u\|_{W^{\beta,s}(\Omega)}$ and apply the estimate in $\R^n$ proved below. To replace the norms with semi-norms, we simply apply the standard argument that subtracting constants from the function $u$ does not affect its Jacobian determinant.

To address the subdeterminants, we make a simple modification of the proof of Theorem \ref{thm:interpolated} in order to apply the extension identity of Brezis--Nguyen, \cite{Brezis2011b}. Rather than employing the notation of \eqref{eq:div-curl_structure}, we  reorder the coordinates and the rows of the original matrix $\D u$, so that we may assume without loss of generality that we are working with the first principal $s$-minor, i.e. the subdeterminant
$$\det_s(\D_s u_{1},\ldots,\D_s u_{s}),$$
where $\D_s=\begin{pmatrix}\partial_1,
\ldots,
\partial_s\end{pmatrix}^\top$. We then observe the trivial fact that
\begin{align*}
\det_s(\D_s u_{1},\ldots,\D_s u_{s})=&\,\pm\det_n(\D_s u_{1},\ldots,\D_s u_{s},e_{s+1},\ldots,e_n)\\
=&\,\pm\det_n(\D u_{1},\ldots,\D u_{s},\D (\rho(x)x_{s+1}),\ldots,\D (\rho(x)x_n)),
\end{align*}
where $e_j$ is the standard basis vector, $x_j$ is the coordinate function, and we choose $R>0$ such that $\supp\,u\subset B_R(0)$ and then take $\rho\in C^\infty_c(B_{2R}(0))$ such that $0\leq\rho\leq 1$, $\rho=1$ on $B_R(0)$, and $|\D\rho|\leq\frac{2}{R}$.

We treat the cases $\al\in(0,1)$ and $\al=1$ separately. First take $\al\in(0,1)$. We then make the usual harmonic extensions of $u$ and $\varphi$ to $U$ and $\Phi$ by convolution with the Poisson kernel. To extend each $\rho(x)x_j$, $j=s+1,\ldots,n$, we multiply with a function $\phi\in C^\infty_c((-1,\infty))$ such that $\phi(0)=1$ and $|\phi'(t)|\leq \frac{1}{R}$ for all $t\geq 0$, to obtain
\begin{align*}
\Big|\int_{\R^n}&\,\det_s(\D_s u_{1},\ldots,\D_s u_{s})\varphi\dif x\Big|\\
=&\,\Big|\int_{\R^{n+1}_+}\det_{n+1}\big(\D_{t,x}\Phi,\D_{t,x} U_1,\ldots,\D_{t,x} U_s,\D_{t,x} (\rho(x)x_{s+1}\phi(t)),\ldots,\D_{t,x} (\rho(x)x_n\phi(t))\big)\dif x\dif t\Big|\\
\leq&\,C[\varphi]_{C^{0,\alpha}}\int_{\R^{n+1}_+}t^{\al-1}|\D_{t,x} U_1|\dots|\D_{t,x} U_s|\,\dif x\dif t\\
\leq&\,C[\varphi]_{C^{0,\alpha}}\prod_{j=1}^s\Big(\int_{\R^{n+1}_+}\big(t^{\frac{\al-1}{s}}|\D_{t,x} U_j|\big)^s\dif x\dif t\Big)^{\frac{1}{s}}\\
\leq&\,C[\varphi]_{C^{0,\alpha}}[u]_{W^{\beta,s}}^s,
\end{align*}
as required, where we have used that $\frac{\alpha}{s}=1-\beta$ and \eqref{eq:fracSobest} in the last line, and that the functions $$\D_{t,x}(\rho(x)x_j\phi(t))=(\rho(x)+x_j\partial_{x_j}\rho)\phi(t)e_{j}+\sum_{\substack{i=1\\i\neq j}}^n\partial_{x_i}\rho x_j\phi(t)e_i+\rho(x)x_j\phi'(t)e_{n+1}$$ for $j=s+1,\ldots,n$ are all uniformly bounded due to the bounds on $\D\rho$ and $\phi'$.

In the case $\al=1$, the estimate \eqref{eq:Holderest} fails, and so we instead employ the fact that $W^{\frac{s}{s-1},s}$ is the trace space of $W^{1,s}$. We take the extension of $u$ by averages to $U:(0,1)\times\R^n \to\R^n$:
\begin{equation*}
U(t,x)=\fint_{B_t(x)}u(y)\dif y,\text{ so that } \|\D U\|_{L^s((0,1)\times\R^n)}\leq C\|u\|_{W^{\frac{s}{s-1}}(\R^n)}
\end{equation*}
by standard trace theory. We extend $\varphi$ to $\Phi\in C^{0,1}_c([0,1)\times\R^n)$ such that $\|\D\Phi\|_{L^\infty([0,1)\times\R^n)}\leq C\|\D\varphi\|_{L^\infty(\R^n)}$ and extend the coordinate functions as above. Then we again have the extension identity (note that the compact support of $\Phi$ with respect to $t$ ensures that no other boundary term appears) and estimate
\begin{align*}
\Big|\int_{\R^n}&\,\det_s(\D_s u_{1},\ldots,\D_s u_{s})\varphi\dif x\Big|\\
=&\,\Big|\int_{(0,1)\times \R^{n}}\det_{n+1}\big(\D_{t,x}\Phi,\D_{t,x} U_1,\ldots,\D_{t,x} U_s,\D_{t,x} (\rho(x)x_{s+1}\phi(t)),\ldots,\D_{t,x} (\rho(x)x_n\phi(t))\big)\dif x\dif t\Big|\\
\leq&\,C\|\D\Phi\|_{L^\infty([0,1)\times\R^n)}\|\D U\|^s_{L^s((0,1)\times\R^n)}\\
\leq&\, C[\varphi]_{C^{0,1}(\R^n)}\|u\|_{W^{\frac{s}{s-1},s}(\R^n)}^s,
\end{align*}
and we replace the norms with semi-norms by the usual considerations.

Returning now to our original function $v$, we note that, by the Helmholtz decomposition,
$$[u]_{W^{\beta,s}}=[\D^{k-1} \tilde u]_{W^{\beta,s}}\leq C[v]_{W^{\beta-1,s}},$$
which is justified as follows: First, recall from the Helmholtz decomposition that 
$$
\mathcal{F}\tilde u(\xi)=\B^\dagger(\xi)\hat v(\xi),\text{ so }\hat u(\xi)=\mathcal{F}(\D^{k-1}\tilde u)(\xi)=\B^\dagger(\xi)\hat v(\xi)\otimes \xi^{\otimes (k-1)}=\B^\dagger(\xi)\frac{\hat v(\xi)}{|\xi|}\otimes \xi^{\otimes (k-1)}|\xi|
$$
(we again refer to \cite{Guerra2019} for the notation $\B^\dagger$), so that \cite[Theorem~5.2.2 and~5.2.3.1(i)]{Triebel1983} imply that
$$
[u]_{W^{\beta,s}}\leq C\left[\mathcal{F}^{-1}\left(\frac{\hat{v}(\xi)}{|\xi|}\right)\right]_{W^{\beta,s}}\leq C[v]_{W^{\beta-1,s}}.
$$
Finally, to obtain the statement in the case of a difference of functions, we make the same reduction to the case of subdeterminants, recall \eqref{eq:BNdetdiff} and estimate each term on the right as above.
\end{proof}

The following corollary is deduced from Theorem \ref{thm:D} by the Gagliardo--Nirenberg interpolation theorem, following exactly the same argument as that used in Corollary \ref{cor:GNinterpolation}.

\begin{cor}
\label{cor:D'}
Let $\Omega\subset\R^n$ be either a bounded Lipschitz domain  or $\Omega=\R^n$, $F\colon \mathbb V\rightarrow \R$ be an homogeneous $\cala$-quasiaffine map of degree $s\geq2$. Suppose that $u,v\in C^\infty_c(\Omega,\mathbb V)$ are $\cala$-free. Let $\alpha\in (0,1]$ and let $p\in (s-1,\infty)$ and $q\in (1,\infty)$ be such that $\frac{1}{q}+\frac{s-1}{p}=1$. Then, for any $\varphi\in C^{0,\alpha}(\Omega)$, we obtain the estimates
\begin{align*}
\Big|\int_{\Omega}&\, \big(F(u)-F(v)\big)\varphi\,\dif x\Big|\\
&\leq C[\varphi]_{C^{0,\alpha}} [u-v]^{\frac{1}{s}}_{W^{-\alpha,q}}\| u-v\|_{L^p}^{1-\frac{1}{s}}\big([u]_{W^{-\alpha,q}}+[v]_{W^{-\alpha,q}}\big)^{\frac{s-1}{s}}\big(\|u\|_{L^p}+\|v\|_{L^p}\big)^{\frac{(s-1)^2}{s}}.
\end{align*}  
\end{cor}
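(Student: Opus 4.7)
The plan is to reduce the corollary to Theorem~\ref{thm:D} via a single application of the Gagliardo--Nirenberg interpolation inequality of \cite[Corollary 3.2]{Brezis2001}, exactly as was done in Corollary~\ref{cor:GNinterpolation}. Recall Theorem~\ref{thm:D} gives, with $\beta=1-\alpha/s$, so that $-1+\beta=-\alpha/s$,
\begin{equation*}
\Big|\int_\Omega \big(F(u)-F(v)\big)\varphi\,\dif x\Big|\leq C[\varphi]_{C^{0,\alpha}}[u-v]_{W^{-\alpha/s,s}}\big([u]_{W^{-\alpha/s,s}}+[v]_{W^{-\alpha/s,s}}\big)^{s-1}.
\end{equation*}
Thus it suffices to control the $W^{-\alpha/s,s}$ seminorms of $u$, $v$, and $u-v$ by a product of an $L^p$ norm and a $W^{-\alpha,q}$ seminorm.

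First, I would check the interpolation: choose the endpoints $\beta_1=-\alpha$, $p_1=q$ and $\beta_2=0$, $p_2=p$, and seek $\theta\in(0,1)$ with
\begin{equation*}
-\tfrac{\alpha}{s}=\theta(-\alpha)+(1-\theta)\cdot 0,\qquad \tfrac{1}{s}=\tfrac{\theta}{q}+\tfrac{1-\theta}{p}.
\end{equation*}
The first equation forces $\theta=1/s$, and substituting into the second gives precisely the hypothesis $\frac{1}{q}+\frac{s-1}{p}=1$. Hence, applying the Gagliardo--Nirenberg inequality (in its version extending to negative differentiability), I obtain for any $w\in C^\infty_c(\Omega,\mathbb V)$
\begin{equation*}
[w]_{W^{-\alpha/s,s}}\leq C\,[w]_{W^{-\alpha,q}}^{1/s}\,\|w\|_{L^p}^{\,1-1/s}.
\end{equation*}

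Applying this to $w=u-v$ gives the first factor in the claimed bound, while applying it to $w=u$ and $w=v$ and using the elementary inequality $a^{1/s}b^{1-1/s}+c^{1/s}d^{1-1/s}\leq 2(a+c)^{1/s}(b+d)^{1-1/s}$ for non-negative reals yields
\begin{equation*}
\big([u]_{W^{-\alpha/s,s}}+[v]_{W^{-\alpha/s,s}}\big)^{s-1}\leq C\big([u]_{W^{-\alpha,q}}+[v]_{W^{-\alpha,q}}\big)^{(s-1)/s}\big(\|u\|_{L^p}+\|v\|_{L^p}\big)^{(s-1)^2/s},
\end{equation*}
where I used $(s-1)(1-1/s)=(s-1)^2/s$. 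Multiplying the two estimates together and plugging into Theorem~\ref{thm:D} produces the stated inequality.

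The only genuinely non-routine point is justifying the Gagliardo--Nirenberg interpolation at a negative differentiability endpoint, since the classical statement requires $\beta_1\geq 0$. This is exactly the setting handled in \cite{Brezis2001} (via Besov or Bessel-potential formulations on $\R^n$, and via extension for bounded Lipschitz domains, as used already in Theorem~\ref{thm:D}); the remaining manipulations are purely algebraic bookkeeping of the exponents. The case $\Omega$ a bounded Lipschitz domain is handled exactly as in the proof of Theorem~\ref{thm:D}, by Besov extension with norm control depending on $\Omega$.
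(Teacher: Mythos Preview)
Your proposal is correct and follows essentially the same approach as the paper, which simply says the corollary is ``deduced from Theorem~\ref{thm:D} by the Gagliardo--Nirenberg interpolation theorem, following exactly the same argument as that used in Corollary~\ref{cor:GNinterpolation}.'' Your choice of endpoints $\beta_1=-\alpha$, $\beta_2=0$ with $\theta=1/s$ is exactly the analogue of the second interpolation in Corollary~\ref{cor:GNinterpolation} (where $\theta=1/n$, $\beta_1=1-\alpha$, $\beta_2=1$), shifted down by one order of differentiability to match the $W^{-1+\beta,s}$ norms appearing in Theorem~\ref{thm:D}. You are also right to flag that the interpolation is now at a negative endpoint, which is not covered by the literal statement of \cite[Corollary~3.2]{Brezis2001}; the paper glosses over this point, but as you note it follows from standard interpolation theory for Besov or Bessel-potential spaces.
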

In the case that $v\equiv 0$, $\alpha=1$, this returns precisely the estimates of \cite[Theorem 1]{Brezis2011b}, \cite[Proposition 7.1]{Guerra2019}.

We conclude this section by proving that Theorem \ref{thm:D} is optimal on the scale of fractional Sobolev spaces. More precisely, we work with the Jacobian determinant and prove the following proposition.
\begin{prop}\label{prop:sharpjacobian}
Let $\Omega\subset\R^n$, $n\geq 2$, be an open, bounded domain, $\al\in(0,1)$, and suppose $\beta\in(0,1)$, $p\in(1,\infty)$ are such that $W^{\beta,p}\not\hookrightarrow W^{\frac{n-\alpha}{n},n}$. Then there exist sequences $(u^{(k)})_k\subset C^1(\Omega,\R^n)$, $(\varphi^{(k)})_k\subset C^1_c(\Omega)$ such that
$$\|u^{(k)}\|_{W^{\be,p}},\:\|\varphi^{(k)}\|_{C^{0,\alpha}}\quad\text{ are uniformly bounded,}$$
and also
$$\int_{\Omega}\det(\D u^{(k)})\varphi^{(k)}\dif x\to\infty\text{ as }k\to\infty.$$
\end{prop}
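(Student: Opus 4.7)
The overall strategy is first to reduce to the case of the Jacobian determinant, as in the proof of Theorem~\ref{thm:D}: via the Helmholtz--Hodge decomposition and the representation of $\cala$-quasiaffine polynomials as linear combinations of $s\times s$ minors of a higher-order gradient, one can assume $F=\det$ and $s=n$, so that the task reduces to producing sequences $u^{(k)}\in C^\infty_c(\Omega;\R^n)$ and $\varphi^{(k)}\in C^\infty_c(\Omega)$ with $\|u^{(k)}\|_{W^{\beta,p}}+[\varphi^{(k)}]_{C^{0,\alpha}}$ uniformly bounded and $\int_\Omega\det(\D u^{(k)})\varphi^{(k)}\,\dif x\to\infty$. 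The failure of the embedding $W^{\beta,p}\hookrightarrow W^{(n-\alpha)/n,n}$ on $\Omega$ splits into three subcases according to whether the scaling relation $\beta+\alpha/n\geq n/p$ fails, the differentiability relation $\beta\geq (n-\alpha)/n$ fails, or both, and each needs a separate construction.

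When only the scaling condition fails ($\beta+\alpha/n<n/p$), a direct construction with a fixed test function works. Fix a profile $\Psi\in C^\infty_c(B_1(0);\R^n)$ with $\int\det(\D\Psi)(y)|y|^\alpha\,\dif y\neq 0$, choose disjoint balls $B_{2^{-j}}(x_j)\subset\Omega$ with $x_j$ accumulating at some $x_*\in\Omega$ with $|x_j-x_*|\sim 2^{-j}$, and set
\begin{equation*}
u^{(k)}(x)=\sum_{j=1}^k 2^{j\mu}\,\Psi\bigl(2^j(x-x_j)\bigr),\qquad \varphi(x)=\rho(x)|x-x_*|^\alpha,
\end{equation*}
with $\rho\in C^\infty_c(\Omega)$ a cut-off equal to $1$ near $x_*$. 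Disjointness of supports and the scaling $[\Psi(2^j\cdot)]_{W^{\beta,p}}\simeq 2^{j(\beta-n/p)}$ give $\|u^{(k)}\|_{W^{\beta,p}}^p\simeq\sum 2^{j(\mu p+\beta p-n)}$, bounded for $\mu<n/p-\beta$. Conversely, a change of variables together with $\int\det(\D\Psi)=0$ (automatic for compactly supported $\Psi$) and the Hölder behaviour of $\varphi$ at $x_*$ gives $\int\det(\D u^{(k)})\varphi\,\dif x\simeq\sum 2^{j(n\mu-\alpha)}$, divergent for $\mu\geq\alpha/n$. The scaling-failure condition is exactly $\alpha/n<n/p-\beta$, so the required interval for $\mu$ is non-empty.

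When the differentiability condition fails, the Hölder cancellation at $x_*$ eats too much decay and the preceding argument with a fixed $\varphi$ breaks down. Following the frequency decomposition of \cite[Lemma~5]{Brezis2011b}, I would instead fix $v\in C^\infty_c(\R^n;\R^n)$ and $\phi\in C^\infty_c(\R^n)$ with $\int\det(\D v)\phi\neq 0$ and place $N_j$ bumps per dyadic scale $2^{-j}$ inside $\Omega$ at pairwise disjoint centres, choosing matched expansions
\begin{equation*}
u^{(k)}(x)=\sum_{j=1}^k\sum_{l=1}^{N_j} a_j\,v\bigl(2^j(x-x_{j,l})\bigr),\qquad \varphi^{(k)}(x)=\sum_{j=1}^k b_j\,2^{-j\alpha}\phi\bigl(2^j(x-y_{j})\bigr).
\end{equation*}
The building block $2^{-j\alpha}\phi(2^j\cdot)$ has $C^{0,\alpha}$ seminorm independent of $j$ by the scaling invariance of $[\,\cdot\,]_{C^{0,\alpha}}$, and each $v(2^j(\cdot-x_{j,l}))$ has $W^{\beta,p}$ seminorm $\simeq 2^{j(\beta-n/p)}$. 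By disjointness of supports and a careful choice of the counts $N_j$ together with the weights $a_j,b_j$, the uniform bounds $\|u^{(k)}\|_{W^{\beta,p}}\lesssim 1$ and $[\varphi^{(k)}]_{C^{0,\alpha}}\lesssim 1$ are preserved. Computing the pairing produces a double sum whose diagonal ($j=j'$) contribution is $\simeq\sum_j a_j^n b_j N_j 2^{-j\alpha}$, and exponents can be chosen subcase by subcase so that this diverges while the norm bounds remain valid. The necessity of a sequence $\varphi^{(k)}$, rather than the fixed $C^{0,1}$ function used in \cite{Brezis2011b}, is precisely to handle general $\alpha\in(0,1)$: matching scales at each level $j$ permits the factor $2^{-j\alpha}$ in the test function to be absorbed rather than accumulated.

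The main obstacle will be controlling the off-diagonal, cross-scale contributions in the double sum for the pairing, since $\det$ is multilinear of degree $n$ in the summands and hence generates many mixed terms at different frequencies. These are dealt with by exploiting the divergence structure of cofactor matrices, exactly as in \eqref{eq:div-curl_structure}, combined with repeated integration by parts, which produces geometric decay $2^{-m|j-j'|}$ for arbitrarily large $m$; the resulting off-diagonal tail is then absolutely summable and does not interfere with the divergence of the diagonal. Subdividing the differentiability-failure regime into its two subcases (scaling failing together with differentiability, versus only differentiability failing, which forces $p>n$) dictates which of the parameters $a_j,b_j,N_j$ must grow and which must decay; in each subcase the bookkeeping is explicit but delicate, and completes the construction.
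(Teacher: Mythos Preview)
Your opening reduction to the Jacobian is spurious: the proposition is already stated for the Jacobian determinant, so no Helmholtz--Hodge step is needed here.

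Your approach via spatially localised bumps is genuinely different from the paper's, which works entirely with trigonometric oscillations. In the paper, Case~2 ($p>n$, $\beta+\alpha/n<1$) uses a single high frequency $k$ with $u_i^{(k)}\sim k^{-\beta_1}\sin(kx_i)$ and $\varphi^{(k)}\sim k^{-\alpha}\sin(kx_n)\prod\cos(kx_i)$; Case~3 (the borderline $\beta=1-\alpha/n$, $p>n$) uses a lacunary sum over frequencies $n_\ell=k^{n^2/\alpha}8^\ell$ with carefully estimated cross-terms, yielding only logarithmic divergence. The trigonometric route has the advantage that periodicity places $\sim k^n$ oscillations in $\Omega$ automatically, without any packing argument, and the cross-frequency integrals are computed exactly. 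Your spatial picture is the dual viewpoint and can be made to work, but it is more delicate to execute.

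There is, however, a concrete gap in your differentiability-failure case. You place $N_j$ disjoint $u$-bumps at scale $2^{-j}$ but only a \emph{single} $\varphi$-bump $b_j 2^{-j\alpha}\phi(2^j(x-y_j))$ at that scale. A $\varphi$-bump of diameter $\sim 2^{-j}$ can overlap at most $O(1)$ of the disjoint $u$-bumps of the same diameter, so the diagonal pairing is $\sim a_j^n b_j 2^{-j\alpha}$, \emph{not} $N_j a_j^n b_j 2^{-j\alpha}$ as you claim. Without the $N_j$ factor the argument collapses when $p>n$: optimising the single-bump pairing under the constraints gives growth $2^{j(n^2/p-n\beta-\alpha)}$, which need not diverge once $\beta+\alpha/n\geq n/p$. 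The fix is to use $N_j$ matching $\varphi$-bumps as well (and then to check that the $C^{0,\alpha}$ seminorm of a sum of well-separated bumps is controlled by the maximum, not the sum, of the individual seminorms). With $N_j\sim 2^{jn}$ this recovers the correct exponent $n-n\beta-\alpha$, but this must be said. Finally, your subcase trichotomy does not clearly isolate the borderline $\beta=1-\alpha/n$ with $p>n$, which in the paper requires a genuinely different construction (a sparse sum over scales producing only $\log k$ divergence); your sketch does not indicate how you would handle that endpoint.
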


\begin{proof}
We begin by distinguishing three cases for which the embedding $W^{\beta,p}\hookrightarrow W^{\frac{n-\alpha}{n},n}$ fails.\\
Case 1: $p\leq n$ and $\beta+\frac{\alpha}{n}<\frac{n}{p}$.\\
Case 2: $p>n$ and $\beta+\frac{\alpha}{n}<1$.\\
Case 3: $p>n$ and $\beta=1-\frac{\alpha}{n}$.

\textbf{Case 1:} The argument for Case 1 is simple, and follows almost immediately from the construction in \cite{Brezis2011b}. We recall from \cite[Proof of Remark 1]{Brezis2011b} that there exists a function $g\in C_c^\infty(B_1(0))$ such that the function $g_\eps(x)=\eps^{-\frac{1}{n}}g(\frac{x}{\eps})$ for $\eps>0$ satisfies that for all $\varphi\in C^1(B_1(0))$, 
$$\int_{B_1}\det(\D g_\eps)\varphi\dif x=\sum_{j=1}^n a_j\frac{\partial\varphi}{\partial x_j}(0)+O(\eps\|\D\varphi\|_{L^\infty}),$$
where $a_1>0$ and, moreover, that for any $\beta\in(0,1)$, $p\in(1,\infty)$, we have
$$[g_\eps]^p_{W^{\beta,p}}\approx \eps^{n-\beta p-\frac{p}{n}}.$$
We choose a sequence of test functions $\varphi^{(k)}\in C_c^\infty(B_1)$, uniformly bounded in $C^{0,\al}$, such that $$\frac{\partial\varphi^{(k)}}{\partial x_j}(0) =\begin{cases}
\big(\frac{1}{k}\big)^{\al-1} &\text{ if }j=1,\\
0 & \text{ if }j=2,\ldots n
\end{cases}$$ 
and also $\|\D\varphi^{(k)}\|_{L^\infty}\leq Ck^{1-\alpha}$ where $C$ is independent of $k$.
Such a sequence can easily be achieved by taking a mollification at order $1/k$ of the function
$\varphi(x)=\varphi_1(x_1)\tilde\varphi(x_2,\ldots,x_n)$, where $$\varphi_1(x_1)=\begin{cases}
x_1^\al & x\geq 0,\\
0, & x<0
\end{cases}$$ and $\tilde\varphi\in C_c^\infty(B_1(0))$ in $\R^{n-1}$ with $\tilde\varphi\equiv 1 \text{ on }B_{1/2}(0)$. 

We then define our sequence $u^{(k)}$ as $$u^{(k)}(x)=k^{\frac{\alpha-1}{n}+\gamma}g_{1/k}(x),$$
where $\gamma>0$ is chosen such that $\gamma<\frac{n}{p}-\beta-\frac{\alpha}{n}$. Then we have the fractional Sobolev estimate
$$\|u^{(k)}\|_{W^{\beta,p}}\approx k^{-\frac{1-\alpha}{n}+\gamma-\frac{n}{p}+\beta+\frac{1}{n}}\to 0\text{ as }k\to\infty,$$
by definition of $\gamma$. 

Finally, we note that
\begin{align*}
\int_{B_1}\det(\D u^{(k)})\varphi^{(k)}\dif x=&\, k^{\alpha-1+n\gamma}\big(a_1\frac{\partial\varphi^{(k)}}{\partial x_1}(0)+O(k^{-\alpha})\big)\\
=&\, a_1k^{\alpha-1+n\gamma+1-\alpha}+k^{\alpha-1+n\gamma}O(k^{-\alpha})\\
=&\, a_1k^{n\gamma}+O(k^{-1+n\gamma}),
\end{align*}
which tends to $\infty$ as $k\to\infty$ as $\gamma>0$, $\alpha<1$.

The constructions for Cases 2 and 3 share many similarities. As the example we use in Case 2 is much simpler than that for Case 3 (and the proof is correspondingly much shorter), we choose to include the proof of Case 2 here to aid the reader's comprehension. The basic idea is to construct a sequence of oscillating terms (or, in Case 3, sums of oscillating terms) at increasingly high frequencies, weighted by frequency dependent factors to ensure the uniform boundedness in the fractional Sobolev norm. By choosing appropriate oscillating functions, we ensure that after integration by parts in the Jacobian determinant, we obtain a function whose integral is bounded below by a constant, while the frequency dependent weights tend to infinity. In Case 3, we must choose the frequencies over which we sum to be sufficiently sparse to ensure the boundedness of the cross-terms in the determinant product. Finally, unlike in \cite{Brezis2011b}, it is not sufficient to choose a single test function, but rather the sequence $\varphi^{(k)}$ is taken as an oscillatory sequence at increasing frequencies, with the oscillating factor chosen to complement the oscillations of the sequence $u^{(k)}$.

\textbf{Case 2.} We assume without loss of generality that the ball $B_2(0)\subset \Omega$. Let $\beta_1\in(\beta,\frac{n-\alpha}{n})$ and define, for $k\in\mathbb N$,
\begin{align*}
    u_i^{(k)}=&\,k^{-\beta_1}\sin(kx_i)\qquad\text{ for }i=1,\ldots,n-1,\\
    u_n^{(k)}=&\,-k^{-\beta_1}\cos(kx_n)\phi(x),\quad \phi\in C_c^\infty(B_2),\:\phi\equiv 1\text{ on }B_1,\:0\leq\phi\leq1,\\
    \varphi^{(k)}=&\,k^{-\alpha}\sin(kx_n)\prod_{i=1}^{n-1}\cos(kx_i).
\end{align*}
Throughout the proofs of Cases 2 and 3, constants $C$ and $c$ will always be independent of $k$.
Then we have the usual identity by integration by parts,
\begin{align*}
    \int_{\Omega}&\,\det(\D u^{(k)})\varphi^{(k)}\dif x=-\int_{\Omega}u_n^{(k)}\det(\D u_1^{(k)},\ldots,\D u_{n-1}^{(k)},\D \varphi^{(k)})\dif x.
\end{align*}
A simple computation gives that the determinant on the right is 
$$\det(\D u_1^{(k)},\ldots,\D u_{n-1}^{(k)},\D \varphi^{(k)})=k^{(n-1)(1-\beta_1)+1-\alpha}\cos(kx_n)\prod_{i=1}^{n-1}\cos^2(kx_i).$$
Thus, as $B_1(0)\subset \Omega$, there exists $c>0$ such that the integral is bounded below by
\begin{align*}
    \int_{\Omega}&\,\det(\D u^{(k)})\varphi^{(k)}\dif x=\int_{\Omega}\phi(x)k^{(n-1)(1-\beta_1)+1-\alpha-\beta_1}\cos^2(kx_n)\prod_{i=1}^{n-1}\cos^2(kx_i)\dif x\\
    \geq  &\, ck^{(n-1)(1-\beta_1)+1-\alpha-\beta_1}\\
    =&\, ck^{n-\alpha-n\beta_1},
\end{align*}
which tends to $\infty$ as $k\to\infty$ by construction of $\beta_1$. To check that $u^{(k)}$ is uniformly bounded in $W^{\beta,p}$, we note that
$$\|u^{(k)}\|_{L^\infty}\leq C k^{-\beta_1},\quad \|\D u^{(k)}\|_{L^\infty}\leq C k^{1-\beta_1},$$
hence, by interpolation,
$$[u^{(k)}]_{C^{0,\beta_1}}\leq C.$$
Applying the embedding of $C^{0,\beta_1}\hookrightarrow W^{\beta,p}$ (as $\beta<\beta_1$), we get
$$\|u^{(k)}\|_{W^{s,p}}\leq C.$$
The uniform estimate 
$$[\varphi^{(k)}]_{C^{0,\alpha}}\leq C$$
follows similarly and so we easily conclude Case 2.

\textbf{Case 3.} For Case 3, we begin with the following preliminary notation:\\
For each $k\gg1$, let $n_\ell=k^{\frac{n^2}{\alpha}}8^\ell$ for $\ell=1,\ldots,k$.
Then, trivially, we have the basic estimates 
\begin{equation}\label{ineq:nell}
n_{\ell+1}\geq 4n_\ell \quad\text{ for all }\ell=1,\ldots k-1,
\end{equation}
 and 
\begin{equation}\label{ineq:min_n_diff}
\min_{i\neq j}|n_{\ell_i}-n_{\ell_j}|\geq k^{\frac{n^2}{\alpha}}.
\end{equation}
Then set
\begin{align*}
    u_i^{(k)}=&\,\sum_{\ell=1}^k\frac{1}{n_{\ell}^{\frac{n-\alpha}{n}}(\ell+1)^{\frac{1}{n}}}\sin(n_\ell x_i)\quad\text{ for }i=1,\ldots,n-1,\\
    u_n^{(k)}=&\,-\sum_{\ell=1}^k\frac{1}{n_{\ell}^{\frac{n-\alpha}{n}}(\ell+1)^{\frac{1}{n}}}\cos(n_\ell x_n)\phi(x),\quad \phi\in C_c^\infty(B_2),\:\phi\equiv 1\text{ on }B_1,\:0\leq\phi\leq1,\\
    \varphi^{(k)}=&\,\sum_{\ell=1}^k\frac{1}{n_\ell^{\alpha}}\sin(n_\ell x_n)\prod_{i=1}^{n-1}\cos(n_\ell x_i).
\end{align*}
First we need to check that $u^{(k)}$ is uniformly bounded in $W^{\frac{n-\alpha}{n},n}$ and $\varphi^{(k)}$ is uniformly bounded in $C^{0,\alpha}$. The first of these claims follows similarly to the proof of equation (3.16) in \cite{Brezis2011b}, where we simply note that the change in fractional order $\frac{n-\alpha}{n}$ corresponds to the adjustment we have made to the exponent of $n_\ell$ in the definition of $u^{(k)}$.

Checking $\varphi^{(k)}$ is uniformly bounded in $C^{0,\alpha}$ is similarly straightforward. Indeed, the Littlewood-Paley projection of $\varphi^{(k)}$ at order $j$, $P_j\varphi^{(k)}$ is 
$$P_j\varphi^{(k)}=\sum_{\substack{\ell=1\\ n_\ell\in (2^{j-1},2^{j+1})}}^k\frac{1}{n_\ell^\alpha}\sin(n_\ell x_n)\prod_{i=1}^{n-1}\cos(n_\ell x_i).$$
As there is at most one value of $n_\ell$, which we call $n_{\ell_j}$, for each $j\in\mathbb{Z}$ such that $n_{\ell_j}\in(2^{j-1},2^{j+1})$ by construction of $n_\ell$, this is clearly bounded in $L^\infty$ as
$$\|P_j\varphi^{(k)}\|_{L^\infty}\leq C n_{\ell_j}^{-\alpha},\text{ where }n_{\ell_j}\in (2^{j-1},2^{j+1}).$$
Thus we obtain
$$\|\varphi^{(k)}\|_{C^{0,\alpha}}\leq C\big\|2^{\alpha j}\|P_j\varphi^{(k)}\|_{L^\infty}\big\|_{\ell^\infty}\leq C,$$
where we have used the standard identification of the Besov space $B^{\alpha}_{\infty,\infty}\cong C^{0,\alpha}$ as $\al\in(0,1)$.

It therefore remains only to check that the integral of the Jacobian of $u^{(k)}$ tested against $\varphi^{(k)}$ converges to infinity. We will show that this integral has a lower bound that grows logarithmicly in $k$.
First, we observe
$$\partial_n\varphi^{(k)}(x)=\sum_{\ell=1}^k n_\ell^{1-\alpha}\cos(n_\ell x_n)\prod_{i=1}^{n-1}\cos(n_\ell x_i).$$
Thus a simple calculation shows
\begin{align*}
    \det(&\,\D u_{1}^{(k)},\ldots,\D u_{n-1}^{(k)},\D \varphi^{(k)})\\
    =&\Big(\prod_{i=1}^{n-1}\sum_{\ell=1}^k\frac{n_{\ell_i}^{\frac{\alpha}{n}}}{(\ell_i+1)^{\frac{1}{n}}}\cos(n_{\ell_i}x_i)\Big) \Big(\sum_{\ell_{n+1}=1}^kn_{\ell_{n+1}}^{1-\alpha}\cos(n_{\ell_{n+1}}x_n)\prod_{j=1}^{n-1}\cos(n_{\ell_{n+1}}x_j)\Big)\\
    =&\sum_{\ell=1}^k\frac{1}{(\ell+1)^{\frac{n-1}{n}}}n_{\ell}^{\alpha\frac{n-1}{n}+1-\alpha}\cos(n_\ell x_{n})\prod_{i=1}^{n-1}\cos^2(n_\ell x_i)\\
    &+\sum_{\substack{(\ell_1,\ldots,\ell_{n-1},\ell_{n+1})\neq(\ell,\ldots,\ell)\\\tp{for } \ell=1,\dots, k}}n_{\ell_{n+1}}^{1-\alpha}\cos(n_{\ell_{n+1}}x_n)\prod_{i=1}^{n-1}\frac{n_{\ell_i}^{\frac{\alpha}{n}}}{(\ell_i+1)^{\frac{1}{n}}}\cos(n_{\ell_i}x_i)\cos(n_{\ell_{n+1}}x_i)\\
    =&\,\textrm{I}(x)+\textrm{II}(x).
\end{align*}
\textbf{Claim:} For $k$ sufficiently large, there exist constants $c,C>0$, independent of $k$, such that
\begin{align}
&-\int_{\Omega}\textrm{I}(x)u_n^{(k)}(x)\dif x\geq c \log k, \label{eq:claim1}\\
 &\Big|\int_{\Omega}\textrm{II}(x)u_n^{(k)}(x)\dif x\Big|\leq C.\label{eq:claim2}
\end{align}
Assuming the claim, we conclude the proof as
$$\int_{\Omega}\det(\D u^{(k)})\varphi^{(k)}\dif x=-\int_{\Omega}\big(\textrm{I}(x)+\textrm{II}(x)\big)u_n^{(k)}(x)\dif x\geq c\log k-C,$$
which tends to infinity as $k\to\infty$.

 To prove \eqref{eq:claim1}, we begin by expanding $\textrm{I}(x)u_n^{(k)}$:
\begin{align*}
    -\textrm{I}(x) u_{n}^{(k)}(x)=&\,\phi(x)\sum_{\ell=1}^k\frac{1}{\ell+1}n_\ell^{\alpha\frac{n-1}{n}+1-\alpha-1+\frac{\alpha}{n}}\prod_{i=1}^n\cos^2(n_\ell x_i)\\
    &+\phi(x)\sum_{\ell_n\neq \ell}\frac{n_\ell^{\alpha\frac{n-1}{n}+1-\alpha}n_{\ell_{n}}^{-1+\frac{\alpha}{n}}}{(\ell+1)^{\frac{n-1}{n}}(\ell_n+1)^{\frac{1}{n}}}\cos(n_{\ell_n}x_n)\cos(n_\ell x_n)\prod_{i=1}^{n-1}\cos^2(n_\ell x_i)\\
    =&\,\textrm{I}_1(x)+\textrm{I}_2(x).
\end{align*}
For $\textrm{I}_1(x)$, we note that the exponent of $n_\ell$ is zero and that $\textrm{I}_1\geq0$, and hence obtain
$$\int_{\Omega}\textrm{I}_1(x)\,\dif x\geq \int_{B_1(0)}\textrm{I}_1(x)\,\dif x\geq c\sum_{\ell=1}^k\frac{1}{\ell+1}\geq c \log k.$$
Considering now $\textrm{I}_2$, we note that when $n_{\ell_n}\neq n_\ell$, we have the estimate
$$\Big|\int_{\Omega}\phi(x)\cos(n_{\ell_n}x_n)\cos(n_\ell x_n)\dif x\Big|\leq C \frac{1}{|n_{\ell_n}-n_\ell|}.$$
Hence each term in the sum may be bounded as
\begin{align*}
    \Big|&\,\frac{n_\ell^{\frac{n-\alpha}{n}}n_{\ell_{n}}^{-\frac{n-\alpha}{n}}}{(\ell+1)^{\frac{n-1}{n}}(\ell_n+1)^{\frac{1}{n}}}\int_{\Omega}\phi(x)\cos(n_{\ell_n}x_n)\cos(n_\ell x_n)\prod_{i=1}^{n-1}\cos^2(n_\ell x_i)\dif x\Big|\\
    &\leq C \frac{n_\ell^{\frac{n-\alpha}{n}}}{n_{\ell_n}^{\frac{n-\alpha}{n}}}\frac{1}{|n_\ell-n_{\ell_n}|}\leq C\frac{1}{|n_\ell-n_{\ell_n}|^{\frac{\alpha}{n}}},
\end{align*}
where in the last step we have used that $|n_{\ell}-n_{\ell_n}|\geq \frac{1}{2}\max\{n_\ell,n_{\ell_n}\}$ by \eqref{ineq:nell}.
Thus, as there are $O(k^2)$ terms in the sum, we have the estimate
$$\textrm{I}_2\leq C k^2\max_{i\neq j}\frac{1}{|n_i-n_j|^{\frac{\alpha}{n}}}\leq C \text{ by \eqref{ineq:min_n_diff}}.$$
This proves the estimate \eqref{eq:claim1} in the claim.

To verify \eqref{eq:claim2}, we must consider the product of $\textrm{II}$ with the remaining factor:
\begin{align*}
    \textrm{II}(x) u_{n}^{(k)}(x)
    =\phi(x)\sum_{\ell_n=1}^k\sum_{\substack{(\ell_1,\ldots,\ell_{n-1},\ell_{n+1})\neq(\ell,\ldots,\ell)\\\tp{for } \ell=1,\dots, k}}&\frac{n_{\ell_{n+1}}^{1-\alpha}}{n_{\ell_n}^{\frac{n-\alpha}{n}}}\frac{1}{(\ell_n+1)^{\frac{1}{n}}}\cos(n_{\ell_n} x_n)\cos(n_{\ell_{n+1}}x_n)\\
    &\times\prod_{i=1}^{n-1}\frac{n_{\ell_i}^{\frac{\alpha}{n}}}{(\ell_i+1)^{\frac{1}{n}}}\cos(n_{\ell_i} x_i)\cos(n_{\ell_{n+1}}x_i).
\end{align*}
When $\ell_n=\ell_{n+1}$, the contributions to this sum are
\begin{align*}
    \frac{n_{\ell_{n}}^{\frac{\alpha}{n}-\alpha}}{(\ell_n+1)^{\frac{1}{n}}}\cos^2(n_{\ell_n} x_n)
    \prod_{i=1}^{n-1}\frac{n_{\ell_i}^{\frac{\alpha}{n}}}{(\ell_i+1)^{\frac{1}{n}}}\cos(n_{\ell_i} x_i)\cos(n_{\ell_n}x_i)\\
    =\frac{\cos^2(n_{\ell_n}x_n)}{(\ell_n+1)^{\frac{1}{n}}}\prod_{i=1}^{n-1}\frac{n_{\ell_i}^{\frac{\alpha}{n}}}{n_{\ell_n}^{\frac{\alpha}{n}}}\frac{1}{(\ell_i+1)^{\frac{1}{n}}}\cos(n_{\ell_i}x_i)\cos(n_{\ell_n}x_i).
\end{align*}
For each $i=1,\ldots,n-1$, if $\ell_i=\ell_n$, the factor in the product is bounded by $1$. If $\ell_i\neq\ell_n$ (note that there exists at least one such $i$), then
$$\Big|\int_{\Omega}\phi(x)\frac{\cos^2(n_{\ell_n}x_n)}{(\ell_n+1)^{\frac{1}{n}}}\frac{n_{\ell_i}^{\frac{\alpha}{n}}}{n_{\ell_n}^{\frac{\alpha}{n}}}\frac{\cos(n_{\ell_i}x_i)\cos(n_{\ell_n}x_i)}{(\ell_i+1)^{\frac{1}{n}}}\dif x\Big|\leq C \frac{n_{\ell_i}^{\frac{\alpha}{n}}}{n_{\ell_n}^{\frac{\alpha}{n}}}\frac{1}{|n_{\ell_i}-n_{\ell_n}|}\leq C \frac{1}{|n_{\ell_i}-n_{\ell_n}|^{\frac{n-\alpha}{n}}}.$$
Thus the total contribution from terms of this form is bounded by 
$$k^{n}\frac{1}{|n_{\ell_i}-n_{\ell_n}|^{\frac{n-\alpha}{n}}}\leq C k^n\max_{i\neq j}|n_{\ell_i}-n_{\ell_j}|^{-\frac{n-\alpha}{n}}\leq C\text{ by \eqref{ineq:min_n_diff}}.$$
Finally, if $\ell_{n}\neq \ell_{n+1}$, we have
\begin{align*}
    &\frac{n_{\ell_{n+1}}^{1-\alpha}}{n_{\ell_n}^{\frac{n-\alpha}{n}}}\frac{1}{(\ell_n+1)^{\frac{1}{n}}}\cos(n_{\ell_n} x_n)\cos(n_{\ell_{n+1}}x_n)\prod_{i=1}^{n-1}\frac{n_{\ell_i}^{\frac{\alpha}{n}}}{(\ell_i+1)^{\frac{1}{n}}}\cos(n_{\ell_i} x_i)\cos(n_{\ell_{n+1}}x_i)\\
    &= \frac{n_{\ell_{n+1}}^{\frac{n-\alpha}{n}}}{n_{\ell_n}^{\frac{n-\alpha}{n}}}\frac{1}{(\ell_n+1)^{\frac{1}{n}}}\cos(n_{\ell_n} x_n)\cos(n_{\ell_{n+1}}x_n)\prod_{i=1}^{n-1}\frac{1}{(\ell_i+1)^{\frac{1}{n}}}\frac{n_{\ell_i}^{\frac{\alpha}{n}}}{n_{\ell_{n+1}}^\frac{\alpha}{n}}\cos(n_{\ell_i} x_i)\cos(n_{\ell_{n+1}}x_i).
\end{align*}
As at least one $\ell_i\neq \ell_{n+1}$ and also $\ell_n\neq\ell_{n+1}$, we have an estimate (with this $i$) on the integral of
$$\frac{n_{\ell_{n+1}}^{\frac{n-\alpha}{n}}}{n_{\ell_n}^{\frac{n-\alpha}{n}}}\frac{1}{|n_{\ell_n}-n_{\ell_{n+1}}|}\frac{n_{\ell_i}^{\frac{\alpha}{n}}}{n_{\ell_n^\frac{\alpha}{n}}}\frac{1}{|n_{\ell_i}-n_{\ell_{n+1}}|}\leq C\max_{i\neq j}\frac{1}{|n_{\ell_i}-n_{\ell_j}|}. $$
Then, summing over all such terms, we have
$$\Big|\int_{\Omega}\textrm{II}(x)u_n^{(k)}(x)\dif x\Big|\leq C k^n \max_{i\neq j}|n_{\ell_i}-n_{\ell_j}|^{-\frac{n-\alpha}{n}}+Ck^{n+1}\max_{i\neq j}|n_{\ell_i}-n_{\ell_j}|^{-1}\leq C,$$
with $C$ independent of $k$ by \eqref{ineq:min_n_diff}.
This completes the proof of \eqref{eq:claim2}, and hence of the proposition.
\end{proof}

\appendix
\section{Sharp criteria for $\mathscr{H}^1$ bounds}\label{app:Orlicz}
As will become transparent from the proofs below, sharp criteria for the Hardy bound of Theorem \ref{thm:main} are only visible on the scale of Orlicz spaces. For references concerning Orlicz spaces and Young functions and for the definitions of the concepts used here, we refer to the monographs of Adams--Fournier \cite{Adams2003} and Rao--Ren \cite{Rao1991} as well as Iwaniec--Martin \cite{Iwaniec2001}. Below we give a short presentation of some concepts that are relevant for the main result of this section, Theorem~\ref{thm:hardy_boundOrlicz}.

An \textbf{Orlicz function} $\varphi\colon [0,\infty)\to [0,\infty)$ is a continuous, increasing function such that $\varphi(0)=0,\, \lim_{t\to \infty} \varphi(t)=\infty$.
We define the \textbf{Luxemburg functional} associated to $\varphi$ by 
$$\|f\|_{\lebe^\varphi(\Om)}\equiv \inf\left\{\la>0:\int_\Om \varphi\left(\frac{|f(x)|}{\la}\right)\dif x\leq 1\right\}.$$
The \textbf{Orlicz space} $\lebe^\varphi(\Om, \mathbb V)$ is the space of equivalence classes of $f\colon \Omega \to \mathbb{V}$ such that $\|f\|_{\lebe^\varphi(\Om)}<\infty$ and it is a complete metric space.  In general, $\Vert \cdot \Vert_{\lebe^\varphi(\Omega)}$ is a not norm, but when $\varphi$ is convex it is and $\lebe^\varphi(\Omega,\mathbb V)$ is a Banach space in that case; when $\varphi$ is convex, we say that it is a \textbf{Young function}.

An Orlicz function $\varphi$ is said to satisfy the \textbf{$\De_2$ condition} globally (respectively near infinity) 
if there exists $k>0$ such that for all $s\geq 0$ 
(respectively all $s\geq s_0$ for $s_0>0$ fixed),
$$\varphi(2s)\leq k\varphi(s).$$
If $\varphi$ satisfies the $\De_2$ condition globally we simply write $\varphi\in \De_2$.
This is equivalent to the existence of a constant $c>0$ such that for all $s\geq 0$ (respectively $s\geq s_0)$, $$\frac{1}{c}sa(s)\leq \varphi(s)\leq c sa(s).$$
If $\varphi_1$ and $\varphi_2$ are Orlicz functions, we say that \textbf{$\varphi_2$ dominates $\varphi_1$} 
globally (respectively near infinity) and write $\varphi_1\preceq\varphi_2$
if there exists $k>0$ such that for all $s\geq 0$ (respectively $s\geq s_0$),
$$\varphi_1(s)\leq \varphi_2(ks).$$
If $\varphi_1\preceq\varphi_2$ and $\varphi_2\npreceq\varphi_1$, we say that $\varphi_2$ \textbf{strictly dominates} $\varphi_1$ and write $\varphi_1\prec\varphi_2$.

For a given Young function $\varphi$, we define its \textbf{Young conjugate} $\varphi^*$ by the Legendre transform 
$$\varphi^*(t)\equiv \max_{s\geq 0}\{ts-\varphi(s)\}.$$
Thus for any $s,t\geq 0$, we have Young's inequality
$$st\leq \varphi(s)+\varphi^*(t).$$

\begin{thm}
Let $(\varphi,\varphi^* )$ be a pair of  Young conjugate functions such that $\varphi\in \Delta_2$ globally. Then the dual of $\lebe^{\varphi}(\Omega,\mathbb{V})$ is $\lebe^{\varphi^*}(\Omega, \mathbb V).$
\end{thm}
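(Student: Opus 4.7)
This is a classical result, so I outline the standard two-step approach.

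The plan is to first exhibit an isometric (up to constants) embedding $L^{\varphi^*}(\Omega,\mathbb V) \hookrightarrow L^\varphi(\Omega,\mathbb V)^*$, and then to show that the embedding is surjective using the $\Delta_2$ hypothesis. The first step does not use $\Delta_2$ at all and proceeds via Young's inequality: given $g \in L^{\varphi^*}(\Omega,\mathbb V)$, one defines
\[ T_g(f) \equiv \int_\Omega \langle f(x), g(x) \rangle \, \dif x, \qquad f \in L^\varphi(\Omega,\mathbb V), \]
and observes that Young's inequality $st \leq \varphi(s) + \varphi^*(t)$, applied with $s = |f|/\|f\|_{L^\varphi}$ and $t = |g|/\|g\|_{L^{\varphi^*}}$ and integrated, yields the H\"older-type bound $|T_g(f)| \leq 2\|f\|_{L^\varphi} \|g\|_{L^{\varphi^*}}$. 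Thus $g \mapsto T_g$ is a bounded linear injection $L^{\varphi^*} \to (L^\varphi)^*$.

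For surjectivity, let $T \in L^\varphi(\Omega,\mathbb V)^*$. The key consequence of $\varphi \in \Delta_2$ that I will use is that modular convergence and norm convergence in $L^\varphi$ coincide; in particular, simple functions are dense in $L^\varphi$ and bounded sequences which converge a.e.\ and are dominated by an $L^\varphi$ function converge in $L^\varphi$ (dominated convergence). Working component-wise, one may reduce to $\mathbb V = \mathbb R$, and then for every measurable $E \subset \Omega$ of finite measure set $\nu(E) = T(\mathbbm 1_E)$; since $\mathbbm 1_{E_j} \to 0$ in $L^\varphi$ whenever $|E_j| \to 0$ (by $\Delta_2$-dominated convergence), $\nu$ is a finite signed measure absolutely continuous with respect to Lebesgue measure. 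The Radon--Nikodym theorem produces a locally integrable $g$ with $\nu(E) = \int_E g \, \dif x$, so by linearity $T(s) = \int sg \, \dif x$ for every simple function $s$, and by density the same identity extends to every $f \in L^\varphi$.

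It remains to show $g \in L^{\varphi^*}$, and this is where I expect the only subtlety to lie. The natural approach is a truncation argument: let $g_N = g\,\mathbbm 1_{\{|g|\leq N\} \cap B_N}$ and, by definition of the Luxemburg norm on $L^{\varphi^*}$, test against
\[ f_N(x) = \sgn(g_N(x)) \cdot \frac{\varphi^*(|g_N(x)|/\lambda)}{|g_N(x)|/\lambda} \cdot \mathbbm 1_{\{g_N \neq 0\}} \]
for an appropriate $\lambda$; the identity $s \, (\varphi^*)'(t) = \varphi(s) + \varphi^*(t)$ at equality in Young's inequality gives $\int \varphi(|f_N|/c) \, \dif x \leq 1$ for a suitable $c$, so $\|f_N\|_{L^\varphi} \leq c$, while $\int f_N g \, \dif x = \lambda \int \varphi^*(|g_N|/\lambda) \, \dif x$. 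Combining with $|T(f_N)| \leq \|T\| \cdot c$ and letting $N \to \infty$ via monotone convergence yields $\|g\|_{L^{\varphi^*}} \leq C\|T\|$, completing the proof. The only point which genuinely uses $\Delta_2$ is the density of simple functions used in the extension of $T$ from simple functions to all of $L^\varphi$; without it, the above argument only identifies the dual of the closure of simple functions (the Morse--Transue subspace) rather than all of $L^\varphi$.
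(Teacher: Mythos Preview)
The paper does not prove this theorem: it is stated in the Appendix as a classical fact about Orlicz spaces, with no proof given and the reader referred to the monographs of Adams--Fournier, Rao--Ren, and Iwaniec--Martin. Your outline is the standard textbook argument found in those references (H\"older's inequality for the embedding, Radon--Nikodym plus density of simple functions under $\Delta_2$ for surjectivity, and a truncation/test-function argument for the norm bound on $g$), so there is no discrepancy in approach to report.

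One small correction in your Step 3: the ``identity $s\,(\varphi^*)'(t)=\varphi(s)+\varphi^*(t)$'' is miswritten; equality in Young's inequality reads $st=\varphi(s)+\varphi^*(t)$ precisely when $s=(\varphi^*)'(t)$. With only $\varphi\in\Delta_2$ (and not $\varphi^*\in\Delta_2$), the bound $\int\varphi(|f_N|/c)\,\dif x\le 1$ with $c$ independent of $N$ does not follow quite as directly as your sketch suggests; the cleanest way around this is to work with the Orlicz norm $\|g\|'_{\varphi^*}=\sup\{\int|fg|:\int\varphi(|f|)\le 1\}$, which is automatically bounded by $\|T\|$, and then invoke the (unconditional) equivalence of the Orlicz and Luxemburg norms on $L^{\varphi^*}$. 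This is exactly what the cited references do.
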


We will also use Orlicz--Sobolev spaces, defined similarly to the Zygmund--Sobolev spaces of Section~\ref{sec:zygmund}. In order to speak of distributional derivatives, however, we require functions in $\lebe^\varphi(\Omega,\mathbb V)$ to be locally integrable,  and so we now assume that $\varphi$ is an Orlicz function that dominates $t$ near infinity. 
Under this assumption, we define the \textbf{Orlicz--Sobolev space} $W^{k,\varphi}(\Omega,\mathbb V)$ as the space of those distributions $f \in \mathscr{D}'(\Omega,\mathbb V)$ such that, for all multi-indices $\alpha$ with $|\alpha|\leq k$, we have $\partial^\alpha f\in \lebe^\varphi(\Omega,\mathbb V)$.
We also define the negative Orlicz--Sobolev space ${W}^{-k,\varphi}(\R^n)$ as the space of those tempered distributions $f$ such that
$$\Vert f \Vert_{{W}^{-k,\varphi}(\R^n)}\equiv \left\Vert \mathcal F^{-1}\left(\frac{\mathcal F f(\xi)}{|\xi|^k}\right)\right\Vert_{\lebe^\varphi(\R^n)}<\infty.$$
We say that $f\in {W}^{-k,\varphi}_{\locc}(\R^n)$ if, for any test function $\phi$, we have $f\phi \in {W}^{-k,\varphi}(\R^n)$.

Let $\Om_1$, $\Om_2$ be open sets and let $T$ be a map from a linear subspace of the measurable functions on $\Om_1$ to the measurable functions on $\Om_2$. We say $T$ is quasilinear if there exists $C>0$ such that
$$|T(f+g)(x)|\leq C(|Tf(x)+|Tg(x)|),\quad |T(\la f)(x)|\leq |\la||Tf(x)|$$
for a.e. $x\in \Om_1$, all $f$ and $g$ in the domain of $T$, and all $\la\in \R$.

Given open sets $\Omega_1, \Omega_2\subset \R^n$, let $L\subset \lebe^0(\Omega)$ be a subspace of the space of measurable functions in $\Omega_1$ and consider a linear operator $T\colon L\to \lebe^0(\Omega_2)$. Given two Orlicz functions $\varphi$ and $\psi$, we say $T$ is of strong type $(\varphi,\psi)$ if there exists a constant $C>0$ such that
$$\|Tf\|_{\lebe^\psi(\Om_2)}\leq C\|f\|_{\lebe^\varphi(\Om_1)}$$
for all $f\in \lebe^\varphi(\Omega_1)$. We say that $T$ is of weak type $(\varphi,\psi)$ if there exists $C>0$ such that
$$|\{x\in\Om_2:Tf(x)>\la\}\leq 1/\psi\left(\frac{\la}{C\|f\|_{\lebe^\varphi(\Om_1)}}\right)$$
for all $f\in \lebe^\varphi(\Om_1)$ and $\la>0$. For our purposes, $T$ will be a Calder\'on--Zygmund operator arising from a multiplier:

\begin{thm}[{\cite[\S 12.12]{Iwaniec2001}}]\label{thm:HM_orlicz}
	    Let $\varphi$ be an Orlicz function for which there are numbers $1<p<q<\infty$ such that $t^{-p}\varphi(t)$ is increasing and $t^{-q}\varphi(t)$ is decreasing. 	Let $m$ be a zero-homogeneous H\"ormander--Mihlin multiplier, so $m$ corresponds to a Calder\'on--Zygmund operator $T_m$.
	
	Then $T_m$ is of weak type $(\varphi,\varphi)$ if and only if $\varphi$ is $\De_2$  and is strong type $(\varphi,\varphi)$ if and only if $\varphi,\varphi^*\in \De_2$.
\end{thm}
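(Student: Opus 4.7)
The plan is to combine the classical boundedness properties of Calder\'on--Zygmund operators with Boyd-style interpolation between Lebesgue spaces. The key inputs are that any CZ operator $T_m$ arising from a H\"ormander--Mihlin multiplier is bounded on $L^{p_0}$ for every $p_0\in(1,\infty)$ and of weak type $(1,1)$. The hypotheses on $\varphi$---namely that $t^{-p}\varphi(t)$ is increasing and $t^{-q}\varphi(t)$ is decreasing for some $1<p<q<\infty$---pin the growth of $\varphi$ between two polynomial rates, which is precisely the Boyd index condition that allows interpolation to function cleanly.

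For the sufficiency of the weak-type statement, I would start from the layer-cake formula $\int\varphi(|T_m f|)\,dx = \int_0^\infty \varphi'(\lambda)|\{|T_m f|>\lambda\}|\,d\lambda$ and the Calder\'on--Zygmund decomposition $f=f_\lambda+f^\lambda$ at height $\lambda$, applying the $L^{p_1}$ bound (for some $p_1\in(p,q)$) to the good part and the weak $L^1$ bound to the bad part. Summing the resulting level-set estimates over dyadic $\lambda$, the $\Delta_2$ condition on $\varphi$ is used exactly once to telescope the geometric series; without $\Delta_2$ the summation diverges, which is also why this condition is necessary.

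For the sufficiency of the strong-type statement, the additional assumption $\varphi^*\in\Delta_2$ is equivalent to the lower Boyd index of $L^\varphi$ being strictly greater than $1$. With both Boyd indices strictly in $(1,\infty)$, Boyd's interpolation theorem applied to the quasi-linear $T_m$---which is strong $(p_0,p_0)$ for every $p_0\in(1,\infty)$---yields strong $(\varphi,\varphi)$ boundedness. Equivalently, one can iterate the dyadic argument above on the predual side, using Young's inequality $ab\leq \varphi(a)+\varphi^*(b)$ to renormalise the Luxemburg functional.

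For necessity, I would test $T_m$ on specific functions using the Riesz transforms $R_j$ as the archetypal CZ operators. Exploiting the scaling covariance $R_j(f(\cdot/r))(x)=(R_jf)(x/r)$ together with a strong (or weak) $(\varphi,\varphi)$ bound produces a comparability between $\varphi$ at scale $1$ and scale $r$; letting $r\to 0$ (equivalently, $r\to\infty$) extracts the $\Delta_2$ growth bound on $\varphi$. For the strong case, a duality step (noting that $T_m^*$ is itself a multiplier operator of the same class and that $(L^\varphi)^*\cong L^{\varphi^*}$) transfers the same reasoning to $\varphi^*$. The main obstacle is in the necessity direction: one must design test families---for instance, indicators of thin annuli whose Riesz transforms can be computed or bounded explicitly---so that the operator bound forces the comparability $\varphi(2t)\lesssim\varphi(t)$ (and its dual) uniformly, rather than merely on a bounded range of $t$.
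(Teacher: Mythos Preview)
The paper does not prove this theorem at all: it is quoted verbatim from \cite[\S 12.12]{Iwaniec2001} and used as a black box in the Appendix to justify the Helmholtz--Hodge decomposition in Orlicz spaces. There is therefore no ``paper's own proof'' to compare your proposal against.

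That said, your sketch is broadly in line with how such results are established in the literature. The sufficiency direction via Boyd indices and interpolation is standard and correct in spirit; the monotonicity hypotheses on $t^{-p}\varphi(t)$ and $t^{-q}\varphi(t)$ are indeed a convenient way of encoding that the Boyd indices lie strictly inside $(1,\infty)$, which is what Boyd's theorem requires. Your necessity argument is where the real content lies, and your outline is more heuristic than a proof: the scaling covariance of the Riesz transforms alone does not obviously force $\varphi(2t)\lesssim\varphi(t)$ uniformly in $t$, since dilating the argument of $f$ rescales both sides of the inequality in the same way. The actual arguments (see Iwaniec--Martin or Rao--Ren) typically proceed by exhibiting concrete functions on which the maximal function or a singular integral is pointwise comparable to the function itself up to a fixed dilation, and then reading off the $\Delta_2$ bound from the modular inequality rather than the Luxemburg norm. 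You have identified this as the main obstacle, which is fair, but the proposal as written does not resolve it.
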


We can now state the main result of this section, providing sharp assumptions on the necessary and sufficient relationship between the integrability of the function $v$ and its constraint $\cala v$ in order to obtain the $\mathscr{H}^1$ bound:
\begin{thm}\label{thm:hardy_boundOrlicz}
Let $F\colon \mathbb{V}\to \R$ be $s$-homogeneous and $\cala$-quasiaffine, where $s\geq 2$. We suppose $\varphi$, $\psi$ are Young functions such that there exist $p,q\in(1,\infty)$ such that $t^{-p}\varphi$ and $t^{-p}\psi$ are increasing and $t^{-q}\varphi$ and $t^{-q}\psi$ are decreasing. Moreover, suppose $t^s\preceq \varphi\preceq t^s\log(1+t)$ and $$\big(\varphi\circ t^{\frac{1}{s-1}}\big)^*\circ\big(t\log(1+t)\big)\preceq \psi\preceq t^s\log^s(1+t),$$
where $*$ denotes Young conjugate function.
Then
\begin{align*}
	  \begin{rcases}
	  v\in L^\varphi_{\locc}(\R^n,\mathbb{V})\\
	  \cala v\in {\sobo}{_{\locc}^{-l,\psi} (\R^n,\mathbb{W})}
	  \end{rcases}
	  \implies
	  F(v)\in\mathscr{H}^1_{\locc}(\R^n),
	\end{align*} 
In fact, for any $R>0$ we have the estimate
\begin{equation*}
    \int_{B_R(0)}\mathcal M_{\locc}[F(v)](x)\dif x\leq C \|\eta v\|_{L^\varphi(B_{R+2}(0))} \|\cala (\eta v)\|_{{ \dot \sobo}^{-l,\psi}},
\end{equation*}
where $\eta\in C^\infty_c(B_{R+2}(0))$ is arbitrary.
\end{thm}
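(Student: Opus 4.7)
The plan is to follow the structure of the proof of Theorem~\ref{thm:hardy_bound} verbatim, replacing Zygmund spaces by Orlicz spaces throughout. First, I would upgrade Proposition~\ref{prop:HHHH} to the Orlicz scale: arguing exactly as there, the components $\B u$ and $\cala^* w$ in the decomposition $\eta v = \B u + \cala^* w$ are obtained by applying zero-homogeneous H\"ormander--Mihlin multipliers to $\eta v$ and to (a suitable Fourier representation of) $\cala(\eta v)$, respectively. The assumed doubling-type conditions on $\varphi$ and $\psi$ (namely that $t^{-p}\varphi$, $t^{-p}\psi$ are increasing and $t^{-q}\varphi$, $t^{-q}\psi$ are decreasing for some $1<p<q<\infty$) are precisely the hypotheses of Theorem~\ref{thm:HM_orlicz}, and give the strong-type $(\varphi,\varphi)$ and $(\psi,\psi)$ bounds needed. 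This yields
\begin{equation*}
\|\B u\|_{\lebe^\varphi(\R^n)} \leq C\|\eta v\|_{\lebe^\varphi(\R^n)}, \qquad \|\cala^* w\|_{\lebe^\psi(\R^n)} \leq C\|\cala(\eta v)\|_{\dot W^{-l,\psi}(\R^n)},
\end{equation*}
both supported in $B_{R+2}(0)$.

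Next, I would separate the Hardy control of the ``pure'' $\cala$-free piece from the cross terms, writing
\begin{equation*}
F(\B u + \cala^* w) = F(\B u) + \sum_{i=0}^{s-1} c_i\,(\B u)^{\odot i}(\cala^* w)^{\odot (s-i)},
\end{equation*}
where $\odot$ is symbolic for the multilinear terms in the expansion (as in~\eqref{eq:nonlin_diff}). Since $t^s \preceq \varphi$, the inclusion $L^\varphi(B_{R+2}) \hookrightarrow L^s(B_{R+2})$ holds with constant $C_R$, so Theorem~\ref{thm:CC-standard} gives
\begin{equation*}
\|F(\B u)\|_{\mathscr{H}^1(\R^n)} \leq C\|\B u\|_{L^s(\R^n)}^s \leq C_R\|\B u\|_{L^\varphi(\R^n)}^s,
\end{equation*}
which is already an $\mathscr{H}^1$ estimate and in particular dominates $\mathcal{M}_{\locc}$.

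The main work is the bound of each cross term $|\B u|^i|\cala^* w|^{s-i}$ in $L\log L_{\locc}$: once we have this, Proposition~\ref{prop:stein} promotes the sum to $\mathscr{H}^1_{\locc}$ and we are done. This is where the two conditions bracketing $\psi$ enter. Writing $A_i(t) = \varphi(t^{1/i})$ and $B_i(t) = \psi(t^{1/(s-i)})$, the homogeneity identities $\||\B u|^i\|_{L^{A_i}} = \|\B u\|_{L^\varphi}^i$ and $\||\cala^* w|^{s-i}\|_{L^{B_i}} = \|\cala^* w\|_{L^\psi}^{s-i}$ reduce the problem to a H\"older-type inequality in Orlicz spaces: we need
\begin{equation*}
\|fg\|_{L^\Phi} \leq C\|f\|_{L^{A_i}}\|g\|_{L^{B_i}}, \qquad \Phi(t) = t\log(1+t),
\end{equation*}
for $i=1,\dots,s-1$ (and analogously for the $i=0$ case, where only $|\cala^* w|^s$ appears). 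By the standard Orlicz--H\"older inequality this follows once $A_i^{-1}(t)B_i^{-1}(t) \lesssim \Phi^{-1}(t)$ near infinity. The hardest case is $i=s-1$: using the Young-conjugacy relation $A^{-1}(t)(A^*)^{-1}(t) \approx t$, the criterion translates, after algebraic manipulation, into exactly
\begin{equation*}
(\varphi\circ t^{1/(s-1)})^*\circ(t\log(1+t)) \preceq \psi,
\end{equation*}
which is the stated lower bound on $\psi$. The intermediate cases $1 \le i \le s-2$ follow by interpolation between this borderline case and the $i=0$ case, the latter requiring $\psi \succeq t^s \log(1+t)$ (automatic from the lower bound on $\psi$ when $\varphi \succeq t^s$). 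The main obstacle in carrying this out rigorously is precisely this Orlicz--H\"older bookkeeping: verifying that the single sandwich condition on $\psi$ is sharp for the critical term and sufficient, via interpolation of Young functions, to handle every lower-order cross term.
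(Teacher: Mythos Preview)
Your proposal is correct and follows essentially the same route as the paper: upgrade the Helmholtz--Hodge decomposition to the Orlicz scale via Theorem~\ref{thm:HM_orlicz}, control $F(\B u)$ by Theorem~\ref{thm:CC-standard} using $t^s\preceq\varphi$, and then show each cross term $|\B u|^i|\cala^* w|^{s-i}$ lies in $L\log L_{\locc}$ so that Proposition~\ref{prop:stein} applies. The only cosmetic difference is in how the critical $i=s-1$ term is handled: the paper pairs $|\B u|^{s-1}$ with $|\cala^* w|\log(1+|\cala^* w|)$ directly via the $(\tilde\varphi,\tilde\varphi^*)$ duality (where $\tilde\varphi=\varphi\circ t^{1/(s-1)}$), which makes the role of the hypothesis $\tilde\varphi^*\circ(t\log(1+t))\preceq\psi$ transparent without needing the inverse-function criterion $A^{-1}B^{-1}\lesssim\Phi^{-1}$; your formulation is equivalent but slightly more bookkeeping-heavy.
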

\begin{rmk}
Before proceeding to the proof, we note that if either $t^s\log^s(1+t)\preceq\psi$ or $t^s\log(1+t)\preceq\varphi$, then we already have either $L^\psi_{\locc}\hookrightarrow L^s\log^s L_{\locc}$ or $L^\varphi_{\locc}\hookrightarrow L^s\log L_{\locc}$, and hence may apply Theorem \ref{thm:main} directly.
\end{rmk}
\begin{proof} 
The proof follows almost the same lines as the proof of Theorem \ref{thm:hardy_bound}. We now extend the proof of that theorem to the general case for $\varphi$ and $\psi$. \\
First, we note that the assumptions $t^{-p}\varphi$ and $t^{-p}\psi$ are increasing and $t^{-q}\varphi$ and $t^{-q}\psi$ are decreasing are the assumptions necessary to obtain the H\"ormander-Mihlin interpolation Theorem~\ref{thm:HM_orlicz}.  Thus we may make the usual Helmholz-Hodge decomposition analogous to Proposition \ref{prop:HHHH} to obtain $u\in W^{k,\varphi}(\R^n;\mathbb{U})$, $w\in W^{l,\psi} (\R^n;\mathbb{W})$ such that
$$v =\mathcal{B}u+\mathcal{A}^*w.$$
Moreover,
$$\|\mathcal{B}u\|_{L^\varphi(\R^n)}\leq C\|v\|_{L^\varphi(\R^n)},\quad \|\mathcal{A}^*w\|_{L^\psi(\R^n)}\leq C\|\mathcal{A}v\|_{{\dot W}^{-l,\psi}(\R^n)}.$$
We now proceed as in the proof of Theorem \ref{thm:hardy_bound} in making the decomposition 
\begin{align}\label{eq:nonlin_diffOrlicz}
F(\B u+\cala^*w)-F(\B u)=\sum_{|\alpha|=s}\sum_{\beta<\alpha} c_{\alpha,\beta} (\B u)^{\beta}(\cala^*w)^{\alpha-\beta}.
\end{align} 
As $t^s\preceq \varphi\preceq t^s\log(1+t)$ and $$\big(\varphi\circ t^{\frac{1}{s-1}}\big)^*\circ\big(t\log(1+t)\big)\preceq \psi\preceq t^s\log^s(1+t),$$
 one obtains, via a simple calculation, that $\psi\succeq t^s\log(1+t)$. The term $|\beta|=0$ in \eqref{eq:nonlin_diffOrlicz} is therefore controlled by $\cala^* w$ in $L^s\log L$, hence in $L^\psi$. Dealing with the final term, $|\beta|=n-1$, we write $\tilde\varphi=\varphi\circ t^{\frac{1}{s-1}}$ and make the estimate
\begin{align*}
    \int_{B_{R+2}}|\B u|^{s-1}|\cala^*w|\log(1+|\cala^*w|)\dif x\leq&\,\||\mathcal{B} u|^{s-1}\|_{L^{\tilde\varphi}}\||\cala^* w|\log(1+|\cala^* w|)\|_{L^{\tilde{\varphi}^*}}\\
    \lesssim&\, \|\mathcal{B} u\|_{L^\varphi}\|\cala^* w\|_{L^\psi}
\end{align*}
via an obvious duality. This allows us then to conclude the proof as before, with analogous estimate.
\end{proof}

\begin{ex}
To show that these interpolated conditions on $\varphi$ and $\psi$ are sharp, we follow a similar procedure to that of Example \ref{ex:d}. Suppose that $t^s\preceq \varphi\prec t^s\log(1+t)$ and $\psi\prec \big(\varphi\circ t^{\frac{1}{s-1}}\big)^*(t\log(1+t))$, (as $s=2$, this means $\psi\prec \varphi^*\circ (t\log(1+ t))$). Note that as we are assuming $t^2\preceq\varphi$, we therefore have that the conjugate $\varphi^*$ satisfies $\varphi^*\preceq t^2\preceq \varphi$. Thus we have that $\psi$ is strictly dominated by $\varphi\circ(t\log(1+t))$. Without loss of generality, we therefore assume 
$$\varphi\preceq\psi\prec\varphi\circ(t\log(1+t)).$$
We will again construct our example as 
$$v=(f(x_1),0),\quad \tilde v=(\tilde f(x_1),0)$$
for suitable choices of $f$, $\tilde{f}$.\\
Choose $f:(-1,1)\to\R$ such that $f(x_1)>1$ for all $x_1\in(-1,1)$ and
$$f \in \lebe^\psi\hookrightarrow \lebe^\varphi,$$
satisfying that
$$f\varphi^{-1}(\psi(f))\log(1+f)\not\in \lebe^1.$$
This is possible as $\psi\prec \varphi\circ (t\log(1+t))$ implies directly that 
$$\big(\varphi^{-1}(\psi(t))\big)^2\prec t^2\log^2(1+t).$$
Now let $\tilde f(x_1)=g(|f|)f(x_1)$ where
$$g(t)t=\varphi^{-1}(\psi(t))\text{ for }t>0.$$
Then $\tilde v$, defined above, is in $L^\varphi$. Moreover, it is clear that 
$$\div v=\partial_{x_1}f \in W^{-1,\psi},\quad \curl \tilde v=0\in W^{-1,\psi}.$$
Taking the dot product, we obtain
	\begin{equation}
	    \begin{aligned}
	    v\cdot \tilde v\log(1+v\cdot \tilde v)=&\,f^2g(f)\log\big(1+f^2g(f)\big)\\
	    =&\,f\varphi^{-1}(\psi(f))\log\big(1+f\varphi^{-1}(\psi(f))\big)\\
	    \geq &\, c f\varphi^{-1}(\psi(f))\log(1+f),
	    \end{aligned}
	\end{equation}
	for some $c>0$, which is not in $\lebe^1$ by assumption. For the final inequality, we have used that as $f>1$, $\varphi^{-1}(\psi(f))\geq \varphi^{-1}(\psi(1))$ and then possibly adjusted the constant. 
\end{ex}

{\small
\bibliographystyle{acm}
\bibliography{/Users/antonialopes/Dropbox/Oxford/Bibtex/library.bib}
}

\end{document}